\numberwithin{equation}{section}
\numberwithin{equation}{section}
\tikzset{main node/.style={circle,fill=white!20,draw,minimum size=1cm,inner sep=0pt},
	}	
\theoremstyle{plain}
\newtheorem{Th}{Theorem}[section]
\newtheorem{Lemma}[Th]{Lemma}
\newtheorem{Cor}[Th]{Corollary}
\newtheorem{Prop}[Th]{Proposition}
\theoremstyle{definition}
\newtheorem{Def}[Th]{Definition}
\newtheorem{Que}{Question}
\newtheorem{Rem}[Th]{Remark}
\newtheorem{?}[Th]{Problem}
\newtheorem{Ex}[Th]{Example}
\newcommand{\rk}{\operatorname {rk}}
\newtheorem*{Nt*}{Note}
\begin{document}

	 	\setcounter{page}{1}
	 \vspace{2cm}
	 
	 \author{ Gökçen Dilaver$^{1*}$ , Selma Altınok$^2$ }
	 \title{ Extending Generalized Splines over the integers}

	 \thanks{\noindent $^1$ Hacettepe University Department of Mathematics, 06800 Beytepe, Ankara, Turkey\\
	 	\indent \,\,\, e-mail: gokcen.dilaver11@hacettepe.edu.tr ; ORCID: https://orcid.org/0000-0001-6055-111X. \\
	 		\indent  $^*$ \, Corresponding author. \\
	 	\indent $^2$ Hacettepe University Department of Mathematics, 06800 Beytepe, Ankara, Turkey\\
	 	\indent \,\,\,  e-mail: sbhupal@hacettepe.edu.tr ; ORCID: https://orcid.org/0000-0002-0782-1587. \\
	 	}

	 	\begin{abstract}
	 	Let $R$ be a commutative ring with identity and $G$ a graph. \emph{An extending generalized spline} on $G$ is a vertex labeling $f \in \prod_{v} M_v$ such that at each edge $e=uv$ there is an $R$-module $M_{uv}$ together with homomorphisms $ \varphi_u : M_u \to M_{uv}$ and  $ \varphi_v : M_v \to M_{uv}$  for each vertex $u, v$ incident to the edge $e$ so that $\varphi_u(f_u)=\varphi_v(f_v).$ Extending generalized splines are further generalizations for generalized splines. They can also be considered as generalized splines over modules.
	 
	 The main goal of this paper is to study the $R$-module structure of extending generalized splines. We concentrate on two following questions: which of the results for general splines extend to generalized splines over modules and if there is an algorithm or an explicit formula for special basis classes, called a flow up basis, for generalized splines over modules. We show that certain results concerning generalized splines can be extended to a  setting where each vertex $v$ is assigned a module $M_v=m_v\mathbb Z$. We provide an algorithm to construct a special basis for generalized splines over these modules on  paths.  Additionally, we introduce a new technique to construct a flow-up basis on arbitrary graphs using the idea of an algorithm on paths. 
	 \end{abstract}

	 \maketitle

 \textit{Keywords:} Splines, number theory, module theory, graph theory \\

 \textit{AMS Subject Classification:}  05C78, 05C25, 11A07,  13D02.

\section{Introduction} 

Engineers first used the term "spline" to model complicated objects like cars and ships.  Mathematicians later adopted the term and they refer to such objects as "classical splines". Classical splines are piecewise polynomial functions that agree on the intersection of adjacent faces on the faces of a polyhedral complex. In topology and geometry, classical splines are important tools to construct equivariant cohomology rings(see Goresky, Kottwitz and MacPherson\cite{GKM}) and also recently in many areas like, computer-based animation and geometric design.

In this paper, we explore the theoretical framework of extended generalized splines. The concept of extending generalized splines was initially introduced in the open questions section of the work by Gilbert, Polster, and Tymoczko \cite{GPT}, and later elaborated on by Tymoczko \cite{Tymoczko}. Unlike classical generalized splines, these extended versions assign modules $M_v$ to the vertices instead of using the ring $R$ itself. More precisely, let $R$ be a commutative ring with unity, and let $G = (V, E)$ denote an arbitrary graph with vertex set $V$ and edge set $E$.
 \emph{An extending generalized spline} on $G$ is a vertex labeling $f \in \prod_{v\in V} M_v$ such that at each edge $e=uv\in E$ there is an $R$-module $M_{uv}$ together with homomorphisms $\varphi_u : M_u \to M_{uv}$ and  
$\varphi_v : M_v \to M_{uv}$ so that $\varphi_u(f_u)=\varphi_v(f_v).$
The set of extending generalized splines of $G$ is given by
$$\hat{R}_{G}= \{f \in \prod_{v\in V} M_v \mid \text{for each edge }\: e=uv, \: \varphi_u(f_u)=\varphi_v(f_v)\}.$$
The extending generalized splines $\hat{R}_G$ has an $R$-module structure.
When each $M_v$ is $R$ and each $M_{uv}$ is $R/\beta(uv)$ where $\beta$ is a function assigned an ideal of $R$ to each edge $uv$ so that $\varphi_u : M_u \to M_{uv}$ is the standard quotient map, the definition of extending generalized splines  corresponds to the definition of generalized splines $R_{(G,\beta)}$ over $R$ (see Gilbert, Polster and Tymoczko \cite{GPT}).

 One of the most important questions in the study of generalized splines is determining their size and identifying their minimal generators. Most research focuses on a specific choice of the ring $R$, the graph $G$, or the edge labeling function $\beta$ to develop the general theory of generalized splines on $G$ over $R$ (see \cite{AD,AS2019,AS2021,BHKR,Gjoni,GPT,HMR,Mahdavi}).
 
 Handschy, Melnick, and Reinders \cite{HMR} studied integer-valued generalized splines on cycle graphs. They introduced a special class of generalized splines, called flow-up classes, and demonstrated the existence of minimal flow-up classes on cycles over $\mathbb{Z}$. Additionally, they proved that these flow-up classes form a basis for the module of generalized splines. Later, Altınok and Sarıoğlan \cite{AS2019} provided an explicit formula for constructing a flow-up basis for generalized splines on arbitrary graphs over a principal ideal domain (PID) $R$, using certain special trails called zero trails. Their approach showed that selecting minimal zero trails is sufficient to determine a flow-up basis.
 In this paper, we propose a new technique, called the longest path technique,  to determine a flow-up basis which also relies on special trails. However, in our case, choosing maximal trails is sufficient.

Gilbert, Polster, and Tymoczko \cite{GPT} posed several open questions regarding the extension of generalized splines. One central question is which aspects of the theory of generalized splines can be extended when the vertex labels are modules rather than elements of a ring. Another fundamental question concerns the existence of flow-up classes in this extended setting, and the conditions under which a flow-up basis can be constructed. In particular, they ask whether an algorithm can be developed to construct flow-up classes, and under what conditions these classes form a basis.

In this paper, we address these questions for generalized splines defined over submodules $M_v = r_v \mathbb{Z}$ of $\mathbb{Z}$, assigned to each vertex $v$. We provide explicit formulas for constructing a flow-up basis on path graphs and, using the same technique, extend these results to  arbitrary graphs.
Furthermore, our methods naturally extend to the setting in which each vertex $v$ is assigned a module of the form $M_v = m_v R$, where $R$ is a principal ideal domain (PID).

Firstly, we focus on the following questions and we provide some examples.

\begin{Que}
	Which of the previous results on generalized splines extend to extending generalized splines (or generalized splines over modules)?
\end{Que}

\begin{Que}
	Identify extending generalized splines $\hat{R}_G$  as a module when $R$ is a particular ring, such as  integers, polynomial rings, ring of Laurent polynomials, domain etc. Determine whether a flow-up class exists or not (see Section \ref{Preliminaries} for the definition).

\end{Que}

\begin{Ex} 
	Let	$\mathbb{Q}^{(p)} = \{\frac{m}{p^s} \mid m,s \in \mathbb{Z}, s \ge 0\}.$ The $p$-primary component of $ \mathbb{Q}^{(p)} / \mathbb{Z}$ is called the Prüfer group, it is denoted by $\mathbb{Z}(p^{\infty})$. Since it is not a ring we cannot  define generalized spline over $\mathbb{Z}(p^{\infty})$. 
	Never the less, we can define extending generalized splines. Now we consider a graph $G$ whose vertices $v_i$ are labeled by $\mathbb{Z}$-modules $M_{v_i}= \bigg(\frac{1}{p^i}\mathbb{Z}\bigg) / \mathbb{Z}$ and edges $v_iv_j$ are labeled by modules $M_{v_iv_j} := \mathbb{Z}(p^{\infty})$ together with identity homomorphisms $M_{v_i}\to M_{v_iv_j}$ for each vertex $v_i$ incident to the edge ${v_iv_j}$. An extending generalized spline on $G$ is a vertex labeling $f \in \prod_{v_i} M_{v_i}$ satisfying  $$\varphi_{v_i}(f_{v_i})=\varphi_{v_j}(f_{v_j}).$$ It is easy to see that  $\hat{R}_G$ only contains constant splines. 
	 	
	\begin{figure}[h!]
		
		\begin{center}
			
			\begin{tikzpicture}
				\node[main node] (1) {$\bigg(\frac{1}{p^i}\mathbb{Z}\bigg) / \mathbb{Z}$};
				\node[main node] (2) [right = 3cm of 1]  {$\bigg(\frac{1}{p^j}\mathbb{Z}\bigg) / \mathbb{Z}$};
				
				\path[draw,thick]
				(1) edge node [above]{$\mathbb{Z}(p^{\infty})$} (2);

			\end{tikzpicture}
			
		\end{center}
		
		\caption{An edge-labeled path graph $P_2$} \label{prüfer}
	\end{figure}
	In particular, we consider the path graph $P_2$ as in Figure \ref{prüfer} together with $\bigg(\frac{1}{p^i}\mathbb{Z}\bigg) / \mathbb{Z}$ at each vertex $v_i$ and $\mathbb{Z}(p^{\infty})$ at the edge. Let $F=(f_1,f_2) \in \hat{R}_G$ be an extending generalized spline. By a spline condition,  we have
	$f_1= \overline{\frac{a}{p^i}} $ and $f_2= \overline{\frac{b}{p^j}} $ for some $a,b \in  \mathbb{Z}$ such that $f_1= f_2$. This implies that $$\frac{a}{p^i} - \frac{b}{p^j} \in \mathbb{Z}.$$
	Assume that $j=i+k$ for some $k$. Then 
	$(a-tp^i)p^k = b$ for some $t\in \mathbb{Z}$. Therefore, we obtain 
	$$F=(f_1,f_2) = (\overline{\frac{a}{p^i}}, \overline{\frac{ap^k}{p^j}}) = (\overline{\frac{a}{p^i}}, \overline{\frac{a}{p^i}}) $$ since $  \bigg(\frac{1}{p^i}\mathbb{Z}\bigg) / \mathbb{Z} \subset \bigg(\frac{1}{p^j}\mathbb{Z}\bigg) / \mathbb{Z}  \subset  \mathbb{Z}(p^{\infty})$.
	Thus, there exist only constant splines on graph $G$. Therefore, there is no  a second flow-up class.

\end{Ex}

	\begin{figure}[h!]
	
	\begin{center}
		
		\begin{tikzpicture}
			\node[main node] (1) {$2 \mathbb{Z}$};
			\node[main node] (2) [right = 3cm of 1]  {$3 \mathbb{Z}$};
			
			\path[draw,thick]
			(1) edge node [above]{$ \mathbb{Z}$} (2);

		\end{tikzpicture}
		
	\end{center}
	
	\caption{An edge-labeled path graph $P_2$} \label{extending}
\end{figure}	

\begin{Ex}
	Let $P_2$ be a path graph with modules $M_{v_1}= 2\mathbb{Z} $ and $M_{v_2}= 3\mathbb{Z}$ at vertices $v_1,v_2$ and $M_{v_1v_2}=\mathbb Z$ at the edge as in Figure \ref{extending}. An extending generalized spline on $P_2$ is a vertex labeling together with identity $\mathbb{Z}$-module homomorphisms
	\begin{center}

		\begin{tabular}{c c c}
			$ \varphi_{v_1} : 2 \mathbb{Z} \to \mathbb{Z} $ & $\quad $& $ \varphi_{v_2} :3 \mathbb{Z} \to \mathbb{Z}$ \\
			$\quad \quad 2k \to 2k$ & & $\quad \quad 3k \to 3k$
		\end{tabular}
	\end{center}
	suct that   $\varphi_{v_1}(f_{v_1})=\varphi_{v_2}(f_{v_2}).$ Then, we easily observe that  $\hat{R}_G$ is a free $\mathbb{Z}$-module $<(6,6) >$ with rank 1. Therefore, there is no a second flow-up class.

\end{Ex}

	\begin{figure}[h!]
	
	\begin{center}
		
		\begin{tikzpicture}
			\node[main node] (1) {$M$};
			\node[main node] (2) [right = 3cm of 1]  {$M$};
			\node[main node] (3) [right = 3cm of 2]  {$M$};
			
			\path[draw,thick]
			(1) edge node [above]{$M$} (2)
			(2) edge node [above]{$M$} (3);

		\end{tikzpicture}
		
	\end{center}
	
	\caption{An edge-labeled path graph $P_3$} \label{matrix}
\end{figure}
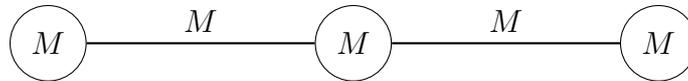

\begin{Ex}
Let $R= \mathbb{Z}_2  \times \mathbb{Z}_2$ and $S=M_2(R)$. Let
$$e_{11} =\begin{bmatrix}
	(1,1)       & \overline{0}  \\
\overline	0       & \overline{0} 

\end{bmatrix},$$
 where $\overline{0}=(0,0)$ and $M=  e_{11} S$. Observe that $e_{11}$ is an idempotent element and $M$ is a right $S$-module. We consider an edge-labeled graph  $G$ whose vertices $v_i$ are labeled by an $S$-module $M_{v_i}= M$ and edges $v_iv_j$ are labeled by the module $M$ together with $S$-module  homomorphisms $\varphi_{v_iv_j}: M\to M$:
\begin{center}
	\begin{tabular}{c c c c}
		$ \varphi_{v_1v_2} : M \to M, \quad $  &   $\varphi_{v_2v_1} :M \to M ,   \quad \quad $ &  $ \varphi_{v_2v_3} :M \to M , \quad $ & $ \varphi_{v_3v_2} :M \to M$  \\
		
		$\quad \quad f_{v_1} \to e_{11}f_{v_1} $ &  $f_{v_2} \to e_{10}f_{v_2}$ &  $\quad \quad f_{v_2} \to e_{01}f_{v_2}$ & $\quad \quad f_{v_3} \to e_{10}f_{v_3}$ 
	\end{tabular}
\end{center}
  where
$$ \varphi_{v_1v_2}(f_{v_1})= e_{11}f_{v_1}= \begin{bmatrix}
	(1,1)       & \overline{0}  \\
 \overline{0}     &  \overline{0}
	
\end{bmatrix} \begin{bmatrix}
 (a_1,a_2)     &  (b_1,b_2) \\
	 \overline{0}       &  \overline{0}
	
\end{bmatrix}  = \begin{bmatrix}
	(a_1,a_2)      & (b_1,b_2)  \\
 \overline{0}       &  \overline{0}
	
\end{bmatrix} ,$$
$$  \varphi_{v_2v_1}(f_{v_2})= e_{10}f_{v_2}= \begin{bmatrix}
	(1,0)       &  \overline{0}  \\
 \overline{0}       &  \overline{0} 
	
\end{bmatrix} \begin{bmatrix}
  (c_1,c_2)      & (d_1,d_2)  \\
	 \overline{0}       &  \overline{0} 
	
\end{bmatrix} = \begin{bmatrix}
	(c_1,0)      & (d_1,0)  \\
	 \overline{0}       &  \overline{0}
	
\end{bmatrix},$$

$$  \varphi_{v_2v_3}(f_{v_2})= e_{01}f_{v_2}= \begin{bmatrix}
	(0,1)       &  \overline{0}  \\
 \overline{0}      &  \overline{0} 
	
\end{bmatrix} \begin{bmatrix}
	(c_1,c_2)     & (d_1,d_2)  \\
 \overline{0}       &  \overline{0}
	
\end{bmatrix} = \begin{bmatrix}
	(0,c_2)      & (0,d_2)  \\
 \overline{0}       &  \overline{0}
	
\end{bmatrix},$$

$$  \varphi_{v_3v_2}(f_{v_3})= e_{10}f_{v_3}= \begin{bmatrix}
	(1,0)       & 0  \\
	\overline{0}      & \overline{0}  
	
\end{bmatrix} \begin{bmatrix}
	(k_1,k_2)      & (l_1,l_2)  \\
	\overline{0}   & \overline{0}
	
\end{bmatrix} = \begin{bmatrix}
	(k_1,0)      & (l_1,0)  \\
	 \overline{0}      & \overline{0}
	
\end{bmatrix}.$$
We can determine an extending generalized spline $(g_1,g_2,g_3) \in \hat{R}_{P_3}$ satisfying  \begin{align*}
\varphi_{v_1v_2}(g_1) &=\varphi_{v_2v_1}(g_2), \\ \varphi_{v_2v_3}(g_2) &=\varphi_{v_3v_2}(g_3).
\end{align*}
Under these conditions, we obtain $$ g_1=g_2 =  \begin{bmatrix}
	(a_1,0)     & (b_1,0)  \\
 \overline{0}     &  \overline{0} 
	
\end{bmatrix},  \quad \quad 
 g_3 =  \begin{bmatrix}
 	(0,k_2)     & (0,l_2)  \\
 	 \overline{0}     &  \overline{0}  
 	
 \end{bmatrix}.$$
 As in the example above, we cannot find a second flow-up class here either.
\end{Ex}

\begin{figure}[h!]
	
	\begin{center}
		
		\begin{tikzpicture}
			\node[main node] (1) {$3 \mathbb{Z}$};
			\node[main node] (2) [right = 3cm of 1]  {$2 \mathbb{Z}$};
			
			\path[draw,thick]
			(1) edge node [above]{$\mathbb{Z}/7 \mathbb{Z}$} (2);

		\end{tikzpicture}
		
	\end{center}
	
	\caption{An edge-labeled path graph $(P_2,\beta)$} \label{smallest}
\end{figure}
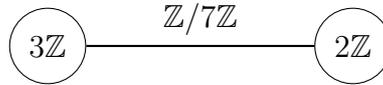

\begin{Ex}
	A definition of the smallest flow-up class in \cite{HMR} does not work here. For example,  consider the edge-labeled path graph $(P_2,\beta)$ as in Figure 
	\ref{smallest}. By Corollary \ref{$g_i with n$}, $(3,10)$ is a minimal spline but not the smallest flow-up class with respect to their definition since  $(6,6)$ is another spline for this path graph.
\end{Ex}

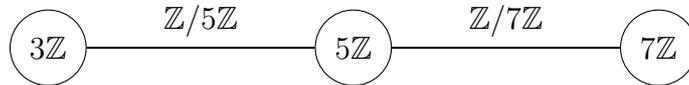
\begin{figure}[h!]
	
	\begin{center}

		\begin{tikzpicture}
			\node[main node] (1) {$3 \mathbb{Z}$};
			\node[main node] (2) [right = 3cm of 1]  {$5 \mathbb{Z}$};
			\node[main node] (3) [right = 3cm of 2] {$7 \mathbb{Z}$};
			
			\path[draw,thick]
			(1) edge node [above]{$\mathbb{Z}/5 \mathbb{Z}$} (2)
			(2) edge node [above]{$\mathbb{Z}/7 \mathbb{Z}$} (3);

		\end{tikzpicture}
		
	\end{center}
	
	\caption{An edge-labeled path graph $(P_3,\beta)$}
	\label{trail}
\end{figure}

\begin{Ex}
In \cite{AS2019}, Altınok and Sarıoğlan introduce the concept of zero trails to identify minimal flow-up classes on arbitrary graphs over a principal ideal domain. However, their algorithm for finding a flow-up class with the smallest nonzero entry does not apply in this context. For instance, when applying zero trails to compute a minimal flow-up class $F^{(2)} = (0, f_{v_2}^{(2)}, f_{v_3}^{(2)})$ on the edge-labeled path graph $P_3$ (as shown in Figure \ref{trail}), we obtain $f_{v_2}^{(2)} = 5$ (see Theorem 3.8 in \cite{AS2019}). Yet, no value of $f_{v_3}^{(2)}$ satisfies both spline conditions: $5 \equiv f_{v_3}^{(2)} \mod 7$ and $f_{v_3}^{(2)} \in 7\mathbb{Z}$. Hence, such an $F^{(2)}$ cannot exist—contradicting the known existence of a minimal flow-up class $F^{(2)} = (0, 35, 0)$.

\end{Ex}

	%------------------------------------------------

	%------------------------------------------------ 
	
	\section{Background and Notations}\label{Preliminaries}

We use $P_n$ to denote a path graph with $n$ vertices; $C_n$ to denote a cycle graph with $n$ vertices; $T_n$ to denote a tree graph with $n$ vertices and $S_n$  to denote a star graph with $n$ vertices. We use the notation $( \quad )$
for the greatest common divisor and $[\quad ]$ for the least common multiple.
\begin{Def}
	
	Let $R$ be a ring and $G = (V,E)$ a graph. An edge-labeling function is a map $ \beta : E \to \{\text{R-modules}\}$ which assigns an $R$-module $M_{uv}$  to each edge $uv$. We call the pair $(G, \beta)$ as an \emph{edge-labeled graph}.
	
\end{Def}

\begin{Def}

	Let $R$ be a ring and $(G,\beta)$ an edge-labeled graph. \emph{An extending generalized spline} on $(G,\beta)$ is a vertex labeling $f \in \prod_{v} M_v$ such that for each edge $uv$ there is an $R$-module $M_{uv}$  together with $R$-module homomorphisms  $ \varphi_u : M_u \to M_{uv}$ and  $ \varphi_v : M_v \to M_{uv}$ satisfying  $\varphi_u(f_u)=\varphi_v(f_v).$ The collection of extending generalized splines on $(G,\beta)$ with the base ring $R$ is denoted by ${\hat R}_{G}$:
	$${\hat R}_{G}= \{f \in \prod_{v} M_v \mid \text{for each edge}\: uv, \: \varphi_u(f_u)=\varphi_v(f_v)\}.$$
	For elements of $\hat{R}_G $ we use a column matrix notation with ordering from bottom to top as follows: 
\begin{equation*}
		F =
	\left( \begin{array}{c}
		f_{v_n} \\
		\vdots\\
		f_{v_1}
	\end{array} \right) \in \hat{R}_G.
\end{equation*}
 We also use a vector notation as $	F= (f_{v_1},\ldots,f_{v_n}).$	
	For the simplicity,  we refer to extending generalized splines as splines. 
\end{Def}

\begin{Rem}
	
	$\hat{R}_G$ has an $R$-module structure by the following two binary operations:	
	\begin{align*}	
			\hat R_G \times \hat{R}_G \to \hat{R}_G ,
			& \, (f,g) \mapsto f+g:= (f_{v_1}+g_{v_1},f_{v_2}+g_{v_2},\ldots,f_{v_n}+g_{v_n}),\\
			R \times \hat{R}_G \to  \hat{R}_G, & \, (r,f) \mapsto rf:= (rf_{v_1},rf_{v_2}, \ldots, rf_{v_n})
\end{align*}
where $f=(f_{v_1},\ldots,f_{v_n})$ and $g=(g_{v_1},\ldots,g_{v_n})$.

\end{Rem}

\begin{Def}
	A  spline  $F= (f_{v_1},\ldots,f_{v_n}) \in  \prod_{v_i} M_{v_i} $ is\emph{ non-trivial} if  $f_{v_j}$ is nonzero for at least one of the $j$. Note that $(0,\ldots,0)$ is a trivial spline. 
\end{Def}
	In this paper, we consider a graph $G$ whose vertices are labeled by $\mathbb{Z}$-modules $M_v= m_v \mathbb{Z}$ and edges $e$ labeled by $\mathbb{Z}$-modules $M_e := \mathbb{Z}/r_e\mathbb Z$ together with quotient homomorphisms $ \varphi_v : M_v \to M_e$, $a \to a+r_e\mathbb Z$, for each vertex $v$ incident to $e$ unless otherwise stated. We study extending generalized splines ${\hat R}_G$ on $G$ satisfying $\varphi_u(f_u)=\varphi_v(f_v).$

 For simplicity, we associate $r_{e} \mathbb{Z}$ to each edge label $ l_{e}=\beta(e)= \mathbb{Z} / r_{e} \mathbb{Z} $.

\begin{Lemma}[Existence of non-trivial splines]
	Let $(G,\beta)$ be an edge-labeled graph with a $\mathbb{Z}$-module  $M_v= m_v \mathbb{Z}$  at each vertex where  $\beta(uv)= \mathbb{Z}/r_{uv} \mathbb{Z} \subset \mathbb{Z}$ at each edge $uv$. Then there exists a nontrivial spline on $(G,\beta)$.
\end{Lemma}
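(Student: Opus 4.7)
The plan is to construct an explicit non-trivial spline by exhibiting a suitable constant labeling. The observation is that the spline condition $\varphi_u(f_u)=\varphi_v(f_v)$ is automatically satisfied whenever $f_u=f_v$ as integers, because the quotient maps $\varphi_u,\varphi_v$ into $\mathbb{Z}/r_{uv}\mathbb{Z}$ send equal integers to equal classes. So the only real task is to produce a single integer $c\ne 0$ that simultaneously lies in every vertex module $m_v\mathbb{Z}$.

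First I would set $c := \mathrm{lcm}(m_v : v \in V(G))$, or, if one prefers to avoid the auxiliary lemma that an lcm exists in $\mathbb{Z}$, simply $c := \prod_{v\in V(G)} m_v$. In either case $c$ is a nonzero integer (assuming all $m_v\ne 0$; if some $m_v=0$ one can still take $c$ to be the lcm/product of the nonzero $m_v$, and we will handle that degenerate case at the end). Since $c$ is a common multiple of every $m_v$, we have $c\in m_v\mathbb{Z}$ for each $v\in V(G)$, so the tuple $F=(c,c,\ldots,c)$ belongs to $\prod_{v} M_v$.

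Next I would verify the spline condition edge by edge. Fix an edge $e=uv$. Then
\[
\varphi_u(f_u)-\varphi_v(f_v) \;=\; \varphi_u(c)-\varphi_v(c) \;=\; (c+r_{uv}\mathbb{Z})-(c+r_{uv}\mathbb{Z}) \;=\; 0
\]
in $\mathbb{Z}/r_{uv}\mathbb{Z}$, so $\varphi_u(f_u)=\varphi_v(f_v)$. Hence $F\in\hat{R}_G$. Since $c\ne 0$, at least one (in fact every) coordinate of $F$ is nonzero, and so $F$ is non-trivial in the sense of the preceding definition.

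The only edge case is when some $m_v=0$, i.e., $M_v=\{0\}$; in that situation $f_v$ is forced to be $0$ and one cannot use a purely constant spline. However, the construction above still adapts: one sets $c=\mathrm{lcm}(m_v : m_v\ne 0)$ and defines $f_v=c$ on the vertices with $m_v\ne 0$ and $f_v=0$ on the vertices with $m_v=0$; the spline condition at any edge $uv$ incident to a zero module is then simply $\varphi_u(0)=\varphi_v(0)=0$, automatically satisfied because the map on $\{0\}$ is the zero map. I do not expect any serious obstacle in this proof; the content of the statement is really just the observation that the lcm (or product) of the vertex labels is always a legitimate constant spline value.
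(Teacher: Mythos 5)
Your proof is correct, but it takes a different route from the paper. The paper constructs a spline supported at a single vertex: it sets $f_{v_j}=0$ for all $j\neq 1$ and $f_{v_1}=[\,m_{v_1},N_1\,]$, the least common multiple of $m_{v_1}$ and the labels $r_{v_1v_j}$ of the edges incident to $v_1$; the only conditions to check are $f_{v_1}\equiv 0 \bmod r_{v_1v_j}$, and all other edges have both endpoints zero. You instead build the globally constant spline $(c,\ldots,c)$ with $c=\operatorname{lcm}(m_v)$, using that the edge maps are the quotient maps, so equal integer values automatically agree in $\mathbb{Z}/r_{uv}\mathbb{Z}$. Both are valid one-line constructions; the paper's version is local (it only uses data at one vertex) and foreshadows the flow-up classes studied later, while yours is essentially the paper's subsequent ``constant flow-up class'' lemma in the case $i=1$ (there $a=[N_1,M_1]$ reduces to the lcm of all $m_v$), so your argument in effect proves that stronger statement too.

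One small caveat about your degenerate-case remark: if exactly one endpoint $u$ of an edge $uv$ has $m_u\neq 0$ while $m_v=0$, the condition is $\varphi_u(c)=0$, i.e. $r_{uv}\mid c$, which is not automatic for $c=\operatorname{lcm}(m_w: m_w\neq 0)$; you would need to fold those edge labels into the lcm (or fall back on the paper's single-vertex construction). This does not affect the lemma as intended, since allowing all $m_v=0$ would make the statement false, so nonzero $m_v$ is implicitly assumed throughout.
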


\begin{proof}
	
	Let $N_{i}= \{r_{v_iv_j} : v_iv_j \in E \}$.
	Assume that the $f_{v_j}$ are labeled by zeros for all $j \neq 1$. If $v_1$ is connected with $v_j$ then we have 
	\begin{gather*}
		f _{v_1} \equiv 0 \mod r_{v_1v_j} \Rightarrow f_{v_1} \in r_{v_1v_j} \mathbb{Z}.
	\end{gather*}
	By definition, we know that $f_{v_1} \in m_{v_1} \mathbb{Z}$.
	Therefore,  $F=(f_{v_1},0,\ldots,0) \in \prod_{v} M_{v}$  with $f_{v_1} = [ m_{v_1}, N_{1}]$ gives a non-trivial  spline on $(G,\beta)$.
	
\end{proof}

\begin{Def}
	Let  $(G,\beta)$  be an edge-labeled graph with an $R$-module $M_v$ at each vertex $v$ and $\beta (uv)= M_{uv}$  at each edge $uv$. \emph{An $i$-th flow-up class} $F^{(i)}$ over $(G,\beta)$ with $1\le i \le n $ 
	is a spline in $\hat{R}_G$ for which the component $f_{v_i}^{(i)} \neq 0 $ and $f_{v_s}^{(i)} = 0 $ whenever $s < i $. The set of all $i$-th  flow-up classes is denoted by $\hat{\mathcal{F}_i}$.
\end{Def}

\begin{Def}
	\emph{A constant flow-up class}  is a flow-up class  $F^{(i)} \in  \prod_{v_i} M_{v_i} $ with $f^{(i)}_{v_s}=a \in M_{v_s}$ for each vertex $v_s \in V$ and $s \ge i$ .

\end{Def}

\begin{Lemma}[Existence of constant flow-up classes]
	Let $(G,\beta)$ be an edge–labeled graph with  a $\mathbb{Z}$-module $M_v= m_v \mathbb{Z}$  at each vertex where  $\beta (uv)= \mathbb{Z}/r_{uv} \mathbb{Z}$ at each edge $uv$. Then there exists a constant flow-up class on $(G,\beta)$.
\end{Lemma}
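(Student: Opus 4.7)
The plan is to construct a single nonzero integer $a$ and take
\[
F^{(i)}=(0,\ldots,0,a,a,\ldots,a),
\]
where the first $i-1$ coordinates are zero and the remaining $n-i+1$ coordinates all equal $a$. The strategy mirrors the existence lemma for non-trivial splines above, but now I must also handle the edges that straddle the cut between the vertex sets $\{v_s : s<i\}$ and $\{v_s : s\ge i\}$.

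First I would classify the edges of $G$ by their position relative to this cut. Edges $uv$ with both endpoints in $\{v_s : s<i\}$ give the tautology $\varphi_u(0)=\varphi_v(0)$, and edges with both endpoints in $\{v_s : s\ge i\}$ give $\varphi_u(a)=\varphi_v(a)$, so neither of these types imposes any condition on $a$. The only nontrivial spline compatibilities come from the \emph{cross-cut} edges $uv$ with $u<i\le v$: for each such edge the requirement reduces to $0\equiv a\pmod{r_{uv}}$, which forces $r_{uv}\mid a$. In addition, I need $a\in M_{v_s}=m_{v_s}\mathbb{Z}$ for every $s\ge i$, i.e.\ $m_{v_s}\mid a$.

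Setting $L_i=\{m_{v_s} : s\ge i\}$ and $B_i=\{r_{uv} : uv\in E,\ u<i\le v\}$, I would take
\[
a=[\,L_i\cup B_i\,],
\]
the least common multiple of all integers in $L_i\cup B_i$; if $B_i=\emptyset$, so that $v_i$ is not adjacent to any lower-indexed vertex, then the cross-cut requirement is vacuous and $a=[L_i]$. Since $m_{v_i}\in L_i$ and $m_{v_i}\neq 0$, the resulting $a$ is a nonzero integer, hence $f^{(i)}_{v_i}=a\neq 0$ and $F^{(i)}$ is a genuine $i$-th flow-up class. By the choice of $a$ every spline condition is satisfied, and by construction the labeling is constant on the vertices $v_s$ with $s\ge i$, so $F^{(i)}$ is a constant flow-up class.

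I do not anticipate any substantive obstacle. The argument is a short extension of the previous lemma, and the only new bookkeeping is the inclusion of the cross-cut edge labels $r_{uv}$ alongside the vertex generators $m_{v_s}$ inside the least common multiple.
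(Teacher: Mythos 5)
Your proposal is correct and follows essentially the same route as the paper: the paper also sets the constant value $a=[N_i,M_i]$, the least common multiple of the cross-cut edge labels $N_i=\{r_{v_kv_j}: v_kv_j\in E,\ j<i,\ k\ge i\}$ and the vertex moduli $M_i=\{m_{v_k}: k\ge i\}$, which matches your $[\,L_i\cup B_i\,]$. Your write-up merely makes explicit the case analysis of edges relative to the cut and the nonvanishing of $a$, details the paper leaves implicit.
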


\begin{proof}
	
	Let $a = [N_{i},M_{i}] $ where  $N_{i}= \{r_{v_kv_j} : v_kv_j \in E \: \text{ for } \: j<i \: \text{ and } \: k \ge i\}$ and $M_{i} =\{m_{v_k} : k \ge i\}$.
	Therefore,  $F^{(i)}= (0,\ldots,0,f_{v_i}^{(i)},\ldots,f_{v_n}^{(i)}) \in  \prod_{v_i} M_{v_i} $  with $f_{v_t}^{(i)} = a \in \mathbb{Z}$  for all $t=i,\ldots,n$ is a constant flow-up class.

\end{proof}

\begin{Def}
	Let $F^{(i)}=(0,\dots,0,f_{v_i}^{(i)}, \dots, f_{v_n}^{(i)}) \in \hat{\mathcal{F}_i}$   be a flow-up class with $f_{v_i}^{(i)} \neq 0$ for $i=1,2,\ldots,n$ and $f_{v_s}^{(i)} =0$ for all $s <i$. We define the leading term of $F^{(i)}, LT(F^{(i)})=f_{v_i}^{(i)}$, the first non-zero entry of $F^{(i)}$. 
	The set of all leading terms of splines in $\hat{\mathcal{F}_i}, LT(\hat{\mathcal{F}_i})$, is an ideal of $\mathbb{Z}$.
\end{Def}

\begin{Def}
	A flow-up class $F^{(i)} $ is called a minimal element of $ \hat{\mathcal{F}_i}$ if $LT(F^{(i)})$ is a generator of the ideal $LT(\hat{\mathcal{F}_i})$.
\end{Def}

\begin{Def}
	\emph{A minimum generating set} for a $\mathbb{Z}$-module $\hat{R}_G$ is a spanning set of splines with the smallest possible number of elements. The size of a minimum generating set is called  \emph{rank} and denoted by $\rk \hat{R}_G$.
\end{Def}

\begin{comment}
	\begin{Lemma}\label{lcmproperties}
		Let  $a_1,\dots,a_n,b$ be arbitrary integers. Then we have the following equalities: 
		\begin{enumerate}
			\item 	$([a_1,\ldots, a_n],b)= [(a_1,b),(a_2,b),\dots,(a_n,b)] $
			\item $[(a_1,\ldots,a_n),b]= ([a_1,b],[a_2,b],\dots,[a_n,b])$	
		\end{enumerate}
		
	\end{Lemma}
	
\end{comment}

Bowden, Hagen, King and Reinders \cite{BHKR} gave the following formulation for $x$ in terms of $y, a$ and $b$ provided a solution does exist.

\begin{Prop}\label{Bowden}
	
	Consider the system of congruences 
	\begin{align*}
		x & \equiv y \mod a \\
		x & \equiv 0 \mod b.
	\end{align*} 
	If this system has a solution, then one solution is given by the following formula:
	
	\begin{itemize}
		\item If $\frac{a}{(a,b)}=1$ then $x=b$ is a solution to the system.
		
		\item If $\frac{a}{(a,b)} \neq 1$ then $$x = y \bigg(\frac{b}{(a,b)}\bigg) \bigg(\frac{b}{(a,b)}\bigg)^{-1} \mod \bigg(\frac{a}{(a,b)}\bigg)$$
		is a solution to the system.
		
	\end{itemize}
	
\end{Prop}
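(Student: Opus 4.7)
The plan is to handle the two cases of the proposition separately and in each case verify directly that the proposed $x$ satisfies both congruences. Write $d := (a,b)$ throughout. In the first case, $a/d = 1$ means $a \mid b$, so any multiple of $b$ is automatically a multiple of $a$; the assumed solvability of the system therefore forces $y \equiv 0 \pmod a$, and then $x = b$ trivially satisfies $b \equiv 0 \pmod b$ and $b \equiv 0 \equiv y \pmod a$.

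For the main case $a/d \neq 1$, I would parametrise solutions of the second congruence by writing $x = tb$, reducing the first congruence to the linear congruence $tb \equiv y \pmod a$ in the unknown $t$. The standard solvability criterion for such a congruence says a solution exists if and only if $d \mid y$, which is therefore a consequence of the hypothesis. Dividing through by $d$ yields $t(b/d) \equiv y/d \pmod{a/d}$, and since $\gcd(b/d,\, a/d) = 1$ the element $b/d$ is a unit modulo $a/d$. Multiplying by its inverse gives $t \equiv (y/d)(b/d)^{-1} \pmod{a/d}$, and taking $x = tb$ for the specific representative $t_0 = (y/d) \cdot [(b/d)^{-1} \bmod (a/d)]$ produces the stated formula after invoking the identity $(y/d) \cdot b = y \cdot (b/d)$, which is a genuine integer equality because $d \mid y$.

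The verification that this $x$ solves the original system is then routine: divisibility by $b$ holds by construction, and modulo $a$ one computes $x \equiv (y/d)(b/d)^{-1} b \equiv (y/d)\cdot d \equiv y \pmod a$, where the middle step uses $(b/d)(b/d)^{-1} \equiv 1 \pmod{a/d}$ lifted to $b(b/d)^{-1} \equiv d \pmod a$ by multiplying through by $d$. There is no deep obstacle here; the only point demanding genuine care is bookkeeping, namely invoking the solvability hypothesis $d \mid y$ at exactly the moments needed to make $y/d$ an honest integer and to justify the passage between the two equivalent written forms of the formula.
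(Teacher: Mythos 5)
Your argument is correct and complete: the reduction $x=tb$, the observation that solvability forces $(a,b)\mid y$, the passage to $t(b/d)\equiv y/d \pmod{a/d}$ with $b/d$ a unit there, and the final verification $b\,(b/d)^{-1}\equiv d \pmod a$ are all sound, and the degenerate case $a/(a,b)=1$ is handled correctly by noting that solvability forces $y\equiv 0 \pmod a$. For comparison, the paper itself gives no argument at this point --- its ``proof'' is only a citation to Proposition 2.9 of Bowden--Hagen--King--Reinders --- so your contribution is a self-contained elementary verification of the quoted formula rather than a different route through material the paper actually writes out; the derivation you give is essentially the standard one behind the cited result (parametrize the second congruence, solve the resulting linear congruence after dividing by $d=(a,b)$), so there is no mathematical divergence to flag, only the fact that you supply details the paper outsources.
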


\begin{proof}
	
	See Proposition 2.9 in \cite{BHKR}.
\end{proof}

\begin{Th}[The Chinese Remainder Theorem] \label{CRT}
	Let $R$ be a PID and $x, a_1,\ldots, a_n, b_1, \ldots, b_n \in R$. Then the system
\begin{align*}
	x & \equiv a_1 \mod b_1 \\
	x & \equiv a_2 \mod b_2 \\
	 & \quad \quad \vdots\\
	x & \equiv a_n \mod b_n
\end{align*}
	has a solution if and only if $a_i \equiv a_j \mod (b_i, b_j )$ for all $i, j \in \{1,\dots,n\}$ with $i \neq j$.
	The solution is unique modulo $[b_1,\dots,b_n]$.

\end{Th}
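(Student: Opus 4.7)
The plan is to prove both directions and the uniqueness claim separately. For necessity, suppose $x$ satisfies all $n$ congruences. Then for any pair $i\neq j$, we have $x - a_i \in (b_i)$ and $x - a_j \in (b_j)$, so
\[ a_i - a_j = (x - a_j) - (x - a_i) \in (b_i) + (b_j) = \big((b_i, b_j)\big), \]
which is exactly the statement $a_i \equiv a_j \mod (b_i, b_j)$.

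For sufficiency, I would argue by induction on $n$. The case $n = 1$ is trivial (take $x = a_1$). For $n = 2$, this is essentially the content of Proposition \ref{Bowden} once we replace $y$ by $a_1 - a_2$ and shift: since $a_1 - a_2 \in \big((b_1, b_2)\big)$, Bézout in the PID $R$ gives $s, t \in R$ with $s b_1 + t b_2 = (b_1, b_2)$, and writing $a_1 - a_2 = c \cdot (b_1, b_2)$ one checks directly that $x := a_1 - c s b_1 = a_2 + c t b_2$ satisfies both congruences simultaneously. For the inductive step, assume the claim holds for $n-1$. By induction applied to the first $n-1$ congruences (whose compatibility is inherited from the hypothesis), there exists $a \in R$ with $x \equiv a \mod [b_1, \dots, b_{n-1}]$ encoding all of them. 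We then reduce to the two-congruence case for
\[ x \equiv a \mod [b_1, \dots, b_{n-1}], \qquad x \equiv a_n \mod b_n, \]
which by the $n = 2$ case admits a solution provided $a \equiv a_n \mod \big([b_1, \dots, b_{n-1}], b_n\big)$.

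The main obstacle is verifying this last compatibility: we must show that the hypothesis $a_i \equiv a_n \mod (b_i, b_n)$ for every $i < n$ implies $a \equiv a_n \mod \big([b_1, \dots, b_{n-1}], b_n\big)$. This rests on the distributive identity
\[ \big([b_1, \dots, b_{n-1}], b_n\big) = \big[(b_1, b_n), (b_2, b_n), \dots, (b_{n-1}, b_n)\big], \]
which holds in any PID (it is the lemma that appears commented out in the preliminaries). Granting this identity, the fact that $a - a_n$ is divisible by each $(b_i, b_n)$ forces $a - a_n$ to be divisible by their lcm, hence by $\big([b_1, \dots, b_{n-1}], b_n\big)$, and the two-congruence case completes the induction. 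I would prove the distributive identity first as a short standalone lemma, since it is the only nontrivial ingredient and is purely arithmetic in the PID.

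For uniqueness, if $x$ and $x'$ both solve the system then $x - x' \in (b_i)$ for every $i$, so $x - x' \in \bigcap_{i=1}^n (b_i)$. In a PID, $\bigcap_{i=1}^n (b_i) = \big([b_1, \dots, b_n]\big)$, which gives $x \equiv x' \mod [b_1, \dots, b_n]$ as required.
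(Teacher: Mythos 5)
The paper itself offers no proof of Theorem \ref{CRT}: it is quoted in the Background section as a standard fact (the PID form of the Chinese Remainder Theorem, the integer version of which appears in \cite{BHKR} and \cite{AS2019}), so there is no argument in the paper to compare yours against. Your proposal is a correct, self-contained proof. Necessity and uniqueness are the standard ideal-theoretic arguments, using $(b_i)+(b_j)=\big((b_i,b_j)\big)$ and $\bigcap_{i=1}^{n}(b_i)=\big([b_1,\dots,b_n]\big)$ in a PID, and the $n=2$ case via B\'ezout is right: with $sb_1+tb_2=(b_1,b_2)$ and $a_1-a_2=c\,(b_1,b_2)$ one indeed has $a_1-csb_1=a_2+ctb_2$. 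The only step you should make explicit in the inductive part is why $a-a_n$ is divisible by each $(b_i,b_n)$: since $a$ solves the $i$-th congruence, $a\equiv a_i \bmod b_i$, hence $a\equiv a_i \bmod (b_i,b_n)$, and combining with the hypothesis $a_i\equiv a_n \bmod (b_i,b_n)$ gives $a\equiv a_n \bmod (b_i,b_n)$; together with the distributive identity $\big([b_1,\dots,b_{n-1}],b_n\big)=\big[(b_1,b_n),\dots,(b_{n-1},b_n)\big]$ (the commented-out lemma, valid in a PID by unique factorization via $\min(\max(e_1,\dots,e_{n-1}),f)=\max_i\min(e_i,f)$, or by distributivity of the ideal lattice) this closes the induction. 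Note also that you use the induction hypothesis in both directions: existence of $a$, and the (trivial) fact that any $x\equiv a \bmod [b_1,\dots,b_{n-1}]$ solves the first $n-1$ congruences because each $b_i$ divides the lcm. With these two small points spelled out, the proof is complete.
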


\section{The Matrix $M_{G}$} \label{matrixsection}
In \cite{GPT}, Gilbert, Polster and Tymoczko gave a definition of the GKM matrix to describe all generalized splines. Under our assumption, we can reformulate and extend the GKM construction to extending generalized splines. Since $\hat R_G$ is a module this section allows to apply the theory of Groebner bases to the problem of calculating and analyzing splines over a ring of integers or polynomials ring. This computation could be done using Maple Groebner package, Mathematica or Macaulay 2.

Let $R$ be a commutative ring with identity and $G$ a graph with $n$ vertices and $s$ edges. We assume that $(G,\beta)$  is an edge-labeled graph with a cyclic  $R$-module $M_v= m_{v}R$ at each vertex $v$ and $\beta (uv)=R/ r_{uv}R$ at each edge $uv$ together with quotient homomorphisms $M_v \to M_{uv}$ at each vertex incident to the edge $uv$. Our definition of the GKM matrix is the following:

\begin{Def}
	Let  $(G,\beta)$  be an edge-labeled graph	
	and $	F= (f_{v_1},\ldots,f_{v_n}) \in \hat{R}_G$. Then for each
	$v_iv_j \in E(G)$ we have $f_{v_i}-  f_{v_j}= g_{ij}r_{ij}$ for some  $ g_{ij} \in \mathbb{Z}$ and by definition for each $v_i \in V(G)$ we have $f_{v_i} = m_{v_i}k_i$ for some $k_i \in \mathbb{Z}$. In other words, we have the spline conditions
	\begin{align*}
		f_{v_i}-  f_{v_j}+ g_{ij}r_{ij} & = 0 \\
		f_{v_i} +m_{v_i}k_i & = 0
	\end{align*}
 for some $ g_{ij},k_i \in \mathbb{Z}$. From here we define the following matrices.
	Let  $A$ be an $s \times n$ matrix with entries
	$$a_{ij}= \left\{
	\begin{array}{c}
		1, \quad \text{if} \quad j=x \\
		-1, \quad \text{if} \quad  j=y\\
		\quad 0, \quad \text{otherwise}
	\end{array}
	\right.$$
where the row corresponds to each edge $e=v_xv_y$ with $x< y$;
$B$ an $s\times s$ diagonal matrix with each nonzero entry $r_{v_xv_y}$ corresponding to edge $e=_{}v_xv_y$ 
and	$C$ an $n\times n$ diagonal matrix $(m_{v_1},\ldots,m_{v_n})$.
Then an extending GKM matrix $M_{(G,\beta)}$ of the pair $(G,\beta)$ is an $(n+s) \times (2n+s)$ matrix defined as
	$$	M_{(G,\beta)} = \begin{bmatrix}
		\begin{bmatrix}
			a_{ij}
		\end{bmatrix}_{s \times n} & \begin{bmatrix}
			b_{ij}
		\end{bmatrix}_{s \times s} & \begin{bmatrix}
			0
		\end{bmatrix}_{s \times n}\\
		\\
		\begin{bmatrix}
			I_{ij}
		\end{bmatrix}_{n \times n} &  \begin{bmatrix}
			0
		\end{bmatrix}_{n \times s} &  \begin{bmatrix}
			c_{ij}
		\end{bmatrix} _{n \times n}
	\end{bmatrix}
	$$ 
	where 
	$	\begin{bmatrix}
		I_{ij}
	\end{bmatrix}_{n \times n}$ is an $n\times n$ identity matrix. As long as there is no confusion, an extending GKM matrix $M_{(G,\beta)}$ is referred to as the GKM matrix $M_{G}$.

\end{Def}
The following proposition relates splines to the GKM matrix. 

\begin{Prop}\label{syz}
	Let $(G,\beta)$  be an edge-labeled graph. Then $(f_{v_1},\dots,f_{v_n}) \in \hat{R}_G$ if and only if there exists an $n+s$ tuple $(f_{v_{n+1}},\dots,f_{v_{2n+s}}) \in R^{n+s}$ such that $$(f_{v_1},\dots,f_{v_n},f_{v_{n+1}},\dots,f_{v_{2n+s}}) \in \ker M_G$$ where $M_G: R^{2n+s}\to R^{n+s}$ is a map corresponding to $M_G$.	
\end{Prop}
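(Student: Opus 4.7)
The plan is to show that the two conditions are literally the same system of $R$-linear equations once I unpack what $\ker M_G$ means row by row, and to interpret the auxiliary coordinates $f_{v_{n+1}},\ldots,f_{v_{2n+s}}$ as witnesses of the divisibility relations encoded in the spline definition.

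First I would write out the equation $M_G\cdot (f_{v_1},\ldots,f_{v_{2n+s}})^T=0$ row by row. The top block $[A\mid B\mid 0]$ contributes one row per edge: the row indexed by $e=v_xv_y$ (with $x<y$) has $+1$ in column $x$, $-1$ in column $y$, and $r_{v_xv_y}$ in the column corresponding to $f_{v_{n+e}}$, so it reads
\[
f_{v_x}-f_{v_y}+r_{v_xv_y}\,f_{v_{n+e}}=0.
\]
The bottom block $[I\mid 0\mid C]$ contributes one row per vertex, giving
\[
f_{v_i}+m_{v_i}\,f_{v_{2n+i}}=0.
\]
Thus $(f_{v_1},\dots,f_{v_{2n+s}})\in\ker M_G$ if and only if both systems hold simultaneously.

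Next I would compare these against the definition of $\hat R_G$. The vertex condition $f_{v_i}\in M_{v_i}=m_{v_i}R$ is equivalent (up to a sign of the witness) to the existence of some $k_i\in R$ with $f_{v_i}+m_{v_i}k_i=0$, and the edge condition $\varphi_{v_x}(f_{v_x})=\varphi_{v_y}(f_{v_y})$ in $R/r_{v_xv_y}R$ is equivalent to $f_{v_x}-f_{v_y}\in r_{v_xv_y}R$, i.e.\ to the existence of some $g_e\in R$ with $f_{v_x}-f_{v_y}+r_{v_xv_y}\,g_e=0$. For the forward direction I would, given a spline $(f_{v_1},\ldots,f_{v_n})\in\hat R_G$, extract these witnesses $k_i$ and $g_e$ and set $f_{v_{2n+i}}:=k_i$, $f_{v_{n+e}}:=g_e$ to produce a kernel element. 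For the converse, given an extension into $\ker M_G$, the row equations directly exhibit $f_{v_i}=-m_{v_i}f_{v_{2n+i}}\in m_{v_i}R$ and $f_{v_x}-f_{v_y}=-r_{v_xv_y}f_{v_{n+e}}\in r_{v_xv_y}R$, so the spline conditions hold.

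The argument is essentially bookkeeping; there is no real mathematical obstacle. The only thing to be careful about is sign conventions and matching the indexing of the auxiliary slots (one per edge for the $B$-block columns, one per vertex for the $C$-block columns) with the corresponding rows of $M_G$ that define them.
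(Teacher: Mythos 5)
Your proposal is correct and follows essentially the same argument as the paper: unpack $M_G H^T=0$ row by row, identify the edge rows with the congruence conditions $f_{v_x}-f_{v_y}\in r_{v_xv_y}R$ and the vertex rows with the membership conditions $f_{v_i}\in m_{v_i}R$, and use the auxiliary coordinates as the divisibility witnesses in both directions. Your version is slightly more explicit about signs and indexing than the paper's, but it is the same bookkeeping proof.
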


\begin{proof}
	Let $F=(f_{v_1},\dots,f_{v_n}) \in \hat{R}_G$. Then we have $f_{v_i} - f_{v_j} + f_{v_{n+t}}r_t =0$
	for some $f_{v_{n+t}} \in R$ at each edge $e_{ij}$ for $t \in \{1,\dots,s\}$ and 
	$f_{v_i} + f_{v_{n+s+j}}m_{v_i} = 0 $
	for some 	$f_{v_{n+s+j}} \in R$ at each vertex $v_i$ for $j  \in \{1,\dots,n\}$. We denote $H=(f_{v_1},\dots,f_{v_n},f_{v_{n+1}},\dots,f_{v_{2n+s}})$. Then it can be easily seen that $H \in \ker M_G$. Conversely, we assume that $H=(f_{v_1},\dots,f_{v_n},f_{v_{n+1}},\dots,f_{v_{2n+s}}) \in \ker M_G$.
	Then the product $M_GH^T$ is an $(n+s)\times 1$ zero vector.  It gives $n+s$ equations which correspond to the spline conditions. Thus, $F\in \hat R_G$.
	
\end{proof}

\begin{Prop}
	Let $(G,\beta)$  be an edge-labeled graph. Let $\{\alpha_i \mid i =1,\dots, 2n+s\}$ be the set of the columns of $M_G$. Then 
	$$\ker M_G = Syz(\alpha_1,\dots,\alpha_{2n+s}).$$
	Moreover, $\hat{R}_G = \pi_n(Syz(\alpha_1,\dots,\alpha_{2n+s}))$ where $\pi_n : R^{2n+s} \to R^n$ is the projection map.
\end{Prop}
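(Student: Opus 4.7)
The plan is to observe that both assertions essentially translate Proposition \ref{syz} into the language of syzygies; the work was already done there, and here one only needs to reinterpret the kernel as a module of relations among columns. First I would establish the equality $\ker M_G = Syz(\alpha_1,\dots,\alpha_{2n+s})$ as a definitional identity: for any $v=(c_1,\dots,c_{2n+s})^T \in R^{2n+s}$, the matrix-vector product satisfies $M_G v = \sum_{i=1}^{2n+s} c_i \alpha_i$, since multiplying a matrix by a column vector is precisely the $R$-linear combination of its columns weighted by the coordinates of the vector. Therefore $v \in \ker M_G$ if and only if $\sum_i c_i \alpha_i = 0$, which is exactly the defining condition for $v$ to lie in the syzygy module of $\{\alpha_1,\dots,\alpha_{2n+s}\}$.

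For the moreover clause, I would invoke Proposition \ref{syz} directly. It asserts that $F=(f_{v_1},\dots,f_{v_n}) \in \hat{R}_G$ if and only if there exist $f_{v_{n+1}},\dots,f_{v_{2n+s}} \in R$ such that the extended tuple $(f_{v_1},\dots,f_{v_{2n+s}})$ lies in $\ker M_G$. Rephrased, this says $\hat{R}_G = \pi_n(\ker M_G)$, where $\pi_n$ is the projection onto the first $n$ coordinates. Combining with the first part yields $\hat{R}_G = \pi_n(\ker M_G) = \pi_n(Syz(\alpha_1,\dots,\alpha_{2n+s}))$, which is the claimed identity.

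There is no substantive obstacle in the argument; the statement is a repackaging of Proposition \ref{syz} in the syzygy language required for the subsequent Gröbner-basis computations alluded to at the start of Section \ref{matrixsection}. The only bookkeeping to verify carefully is the column/row convention — namely that the entries of $v$ are indexed in the same order as the columns $\alpha_i$, and that $\pi_n$ extracts the first $n$ coordinates corresponding to the block $[a_{ij}]_{s\times n}$ stacked over $[I_{ij}]_{n\times n}$ (i.e.\ the vertex-value coordinates, not the auxiliary quotients $g_{ij}$ or scalars $k_i$ introduced in the definition of $M_G$).
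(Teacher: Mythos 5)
Your proposal is correct and follows essentially the same route as the paper: the kernel--syzygy identity via reading $M_G v$ as the column combination $\sum_i c_i\alpha_i$, and the moreover clause by combining this with Proposition \ref{syz} to get $\hat{R}_G=\pi_n(\ker M_G)=\pi_n(Syz(\alpha_1,\dots,\alpha_{2n+s}))$. Nothing is missing.
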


\begin{proof}
	For the GKM matrix $M_G$ as defined above, we have
	\begin{gather*}
		(f_{v_1},\dots,f_{v_n},f_{v_{n+1}},\dots,f_{v_{2n+s}}) \in \ker M_G \Leftrightarrow M_G(f_{v_1},\dots,f_{v_n},f_{v_{n+1}},\dots,f_{v_{2n+s}})^T =0 \\
		\Leftrightarrow f_{v_1}\alpha_1 + \dots + f_{v_n} \alpha_n+ f_{v_{n+1}} \alpha_{n+1} + \dots+ f_{v_{n+s}}\alpha_{n+s}+ \dots + f_{v_{2n+s}} \alpha_{2n+s}=0 \\
		\Leftrightarrow 	(f_{v_1},\dots,f_{v_n},f_{v_{n+1}},\dots,f_{v_{2n+s}})
		\in	Syz(\alpha_1,\dots,\alpha_{2n+s}).
	\end{gather*}
	Moreover, using $\ker M_G = Syz(\alpha_1,\dots,\alpha_{2n+s})$ and Proposition \ref{syz}, it can be easily seen that $$\hat{R}_G = \pi_n(Syz(\alpha_1,\dots,\alpha_{2n+s}))$$ where $\pi_n : R^{2n+s} \to R^n$ is a projection map.	
	
\end{proof}

These results show that the projection of the syzygy module specified on the columns of $M_G$ can be thought of as the spline module $\hat{R}_G$. This is another way of characterizing of $\hat{R}_G$.

	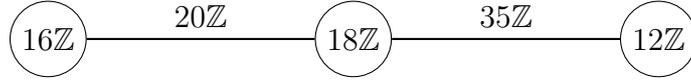
\begin{figure}[h]
		\begin{center}

			\begin{tikzpicture}
				\node[main node] (1) {$16 \mathbb{Z}$};
				\node[main node] (2) [right = 3cm of 1]  {$18 \mathbb{Z}$};
				\node[main node] (3) [right = 3cm of 2] {$12 \mathbb{Z}$};
				
				\path[draw,thick]
				(1) edge node [above]{$20 \mathbb{Z}$} (2)
				(2) edge node [above]{$35 \mathbb{Z}$} (3);

			\end{tikzpicture}

		\end{center}
		
		\caption{Example of an edge-labeled path graph $(P_3,\alpha)$} \label{matrixm}
	\end{figure}

	\begin{Ex}
		Let $(P_3,\beta)$ be an edge-labeled path graph as in Figure \ref{matrixm}. Then the GKM matrix $M_G$ is	
		$$M_G=	\begin{bmatrix}
			1 & -1 & 0 & 20 & 0 & 0 &0 &0  \\
			0 & 1 & -1& 0  & 35 & 0 & 0 & 0\\
			1 & 0 &  0 & 0  & 0 & 16 & 0 & 0\\
			0 & 1 & 0 & 0  & 0 & 0 & 18 & 0\\
			0 & 0 & 1& 0  & 0 & 0 & 0 & 12\\
		\end{bmatrix}.
		$$ 
		If we calculate $Syz(\alpha_1,\dots,\alpha_8)$ using Macaulay 2 and then take the projection of its, we find that 
		\begin{equation*}
			\begin{Bmatrix}
				\left( \begin{array}{c}
					6480\\
					180\\
					160
				\end{array} \right), &
				\left( \begin{array}{c}
					5184 \\
					144\\
					144
				\end{array} \right),
				&
				\left( \begin{array}{c}
					420\\
					0\\
					0
				\end{array} \right)
			\end{Bmatrix}
		\end{equation*}
		is a Groebner basis for $\hat{R}_{G}.$	
	\end{Ex}

\begin{figure}[h]
	\centering
	\begin{tikzpicture}
		\def \n {14}
		\def \N {6}
		\def \radius {4cm}
		\def \rd {3mm}
		\def \rer {8mm}
		
		\def \margin {8} % margin in angles, depends on the radius
		
		\node[draw, circle](1) at (360:0mm) {$15 \mathbb{Z}$};
		\node[draw, circle] (2) at ({360/\n *\n / 4}:\radius) {$7 \mathbb{Z}$};
		\node[draw, circle] (3)at ({360/4 - 360/\n * (3 - 1)}:\radius) {$15 \mathbb{Z}$};
		\node[draw, circle](4) at ({360/4 - 360/\n * (5 - 1)}:\radius) {$18 \mathbb{Z}$};
		\node[draw, circle] (5) at ({360/4 - 360/\n * (8 - 1)}:\radius) {$12 \mathbb{Z}$};

		\node[draw, circle] (6) at ({360/4 + 360/\n * (5 - 1)}:\radius) {$21 \mathbb{Z}$};

		\node[draw, circle](7) at ({360/4 + 360/\n * (3 - 1)}:\radius) {$9 \mathbb{Z}$};        
		
		\path[draw,thick]
		(1) edge node [right]{$  3 \mathbb{Z} $} (2)
		(1) edge node [right]{$ 8 \mathbb{Z}$} (3)
		(1) edge node [below]{$ 16 \mathbb{Z}$} (4)
		(1) edge node [right]{$ 15 \mathbb{Z}  $} (5)
		(1) edge node [below]{$24 \mathbb{Z}$} (6)
		(1) edge node [left]{$ 12 \mathbb{Z}$} (7);

	\end{tikzpicture}
	\caption{An edge-labeled star graph $(S_7,\beta)$} \label{starsyz}
	
\end{figure}
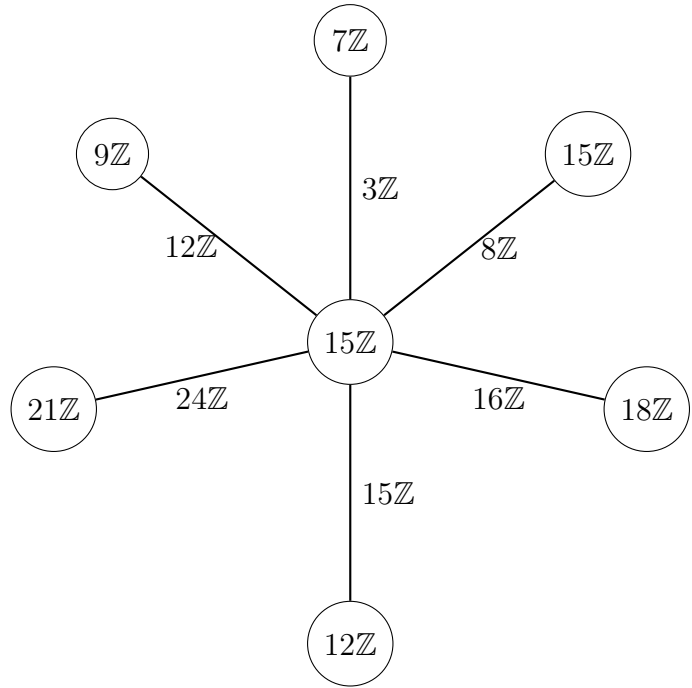

\begin{Ex}
	Let $(S_7,\beta)$ be an edge-labeled star graph as in Figure \ref{starsyz}. Then the GKM  matrix $M_G$ is
	
	$$	M_G = \begin{bmatrix}
		\begin{bmatrix}
			1 & -1& 0& \cdots & 0 \\
			1& 0 & -1 & \cdots & 0 \\
			\hdotsfor{5}\\
			1& 0 & \cdots &  0 & -1
		\end{bmatrix}_{6 \times 7} & \begin{bmatrix}
			3 & 0 & 0 & 0  & 0 & 0 \\
			0 & 8 & 0 & 0  & 0 & 0 \\
			0 & 0 & 16 & 0  & 0 & 0 \\
			0 & 0 & 0 & 15  & 0 & 0 \\	
			0 & 0 & 0 & 0  & 24 & 0 \\
			0 & 0 & 0 & 0  & 0 & 12 \\
		\end{bmatrix} & \begin{bmatrix}
			0
		\end{bmatrix}\\
		\\
		\begin{bmatrix}
			1 & 0 & \cdots & 0 & 0 \\
			0 & 1 &0 & \cdots & 0 \\
			\hdotsfor{5}\\
			0  &0 & \cdots & 0& 1 \\
			
		\end{bmatrix}_{7 \times 7}&  \begin{bmatrix}
			0
		\end{bmatrix} &  \begin{bmatrix}
			15 & 0 & 0 & 0  & 0 & 0 & 0 \\
			0 & 7 & 0 & 0  & 0 & 0 & 0\\
			0 & 0 & 15 & 0  & 0 & 0 & 0\\
			0 & 0 & 0 & 18  & 0 & 0 & 0\\	
			0 & 0 & 0 & 0  & 12 & 0 & 0\\
			0 & 0 & 0 & 0  & 0 & 21 & 0 \\
			0 & 0 & 0 & 0  & 0 & 0 & 9 \\
		\end{bmatrix} 
	\end{bmatrix}.
	$$ 	
	If we calculate $Syz(\alpha_1,\dots,\alpha_{20})$ using Macaulay 2 and then take the projection of its, we find that 
	\begin{equation*}
		\begin{Bmatrix}
			\left( \begin{array}{c}
				-630\\
				-1470\\
				-840\\
				1890\\
				-3150\\
				210\\
				210
				
			\end{array} \right), &
			\left( \begin{array}{c}
				-810\\
				-1890\\
				-1080\\
				2430\\
				-4050\\
				273\\
				270
			\end{array} \right),
			
			&
			\left( \begin{array}{c}
				0\\
				0\\
				0\\
				0\\
				120\\
				0\\
				0
			\end{array} \right),
			&
			\left( \begin{array}{c}
				0\\
				0\\
				0\\
				144\\
				0\\
				0\\
				0
			\end{array} \right),
			&

			\left( \begin{array}{c}
				0\\
				0\\
				60\\
				0\\
				0\\
				0\\
				0
			\end{array} \right),
			
			&
			\left( \begin{array}{c}
				0\\
				168\\
				0\\
				0\\
				0\\
				0\\
				0
			\end{array} \right),
			&
			\left( \begin{array}{c}
				36\\
				0\\
				0\\
				0\\
				0\\
				0\\
				0
			\end{array} \right)
			
		\end{Bmatrix}
	\end{equation*}
	is a Groebner basis for $\hat{R}_{G}.$

\end{Ex}

\begin{figure}[h]
	
	\begin{center}
		\begin{tikzpicture}
			\node[main node] (1) {$x \mathbb{Z}[x,y]$};
			\node[main node] (2) [below left = 3cm of 1]  {$y^2 \mathbb{Z}[x,y]$};
			\node[main node] (3) [below right = 3cm of 1] {$y \mathbb{Z}[x,y]$};
			
			\path[draw,thick]
			(1) edge node [left]{$\mathbb{Z}[x,y] /y \mathbb{Z}[x,y] $} (2)
			(2) edge node [below]{$\mathbb{Z}[x,y] /(x^2+y) \mathbb{Z}[x,y]$} (3)
			(3) edge node [right]{$ \mathbb{Z}[x,y] / x \mathbb{Z}[x,y]$} (1);

		\end{tikzpicture}
	\end{center}
	
	\caption{An edge-labeled cycle graph $(C_3,\beta)$} \label{c3xy}
\end{figure}
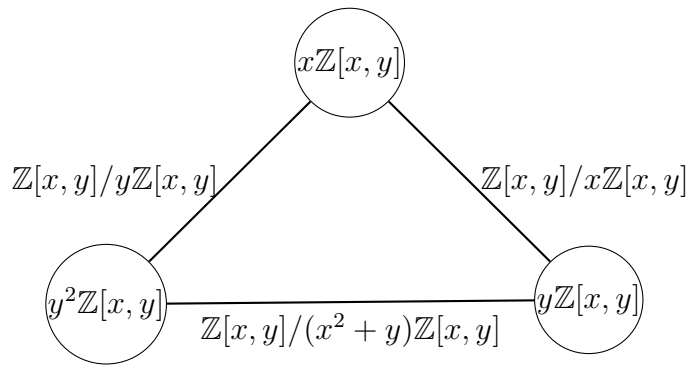

\begin{Ex}
	Let $(C_3,\beta)$ be an edge-labeled cycle graph as in Figure \ref{c3xy}. Then the GKM matrix $M_G$ is the following:	
	$$M_G=	\begin{bmatrix}
		1 & -1 & 0 & x & 0 & 0 &0 &0 &0 \\
		0 & 1 & -1& 0  & x^2+y & 0 & 0 & 0 &0\\
		1 & 0 & -1& 0  & 0 & y & 0 & 0 &0\\
		1 & 0 &  0 & 0  & 0 & 0 & x & 0 & 0\\
		0 & 1 & 0 & 0 &0 & 0 & 0 & y & 0\\
		0 & 0 & 1& 0 &0 & 0 & 0 & 0 & y^2
	\end{bmatrix}.
	$$ 	
	If we calculate $Syz(\alpha_1,\dots,\alpha_9)$ with Macaulay2 and then take the projection, we find that 
	\begin{equation*}
		\begin{Bmatrix}
			\left( \begin{array}{c}
				0\\
				0\\
				xy
			\end{array} \right), &
			\left( \begin{array}{c}
				y^2 \\
				-x^2y\\
				0
			\end{array} \right),
			&
			\left( \begin{array}{c}
				-xy^2 \\
				-xy^2\\
				-xy^2
			\end{array} \right)
		\end{Bmatrix}
	\end{equation*}
	is a Groebner basis for $\hat{R}_{G}.$

\end{Ex}

	%------------------------------------------------

\section{A Basis Condition over $\mathbb{Z}$-module}\label{section main}
In this section, we assume that $(G, \beta)$ is an edge-labeled graph with a module $M_u=m_u\mathbb{Z}$
at each vertex $u$ and a module $M_{uv}=\mathbb{Z}/ r_{uv}\mathbb{Z}$ at each edge $uv$.  We also assume that there are quotient module homomorphisms $\varphi_u: M_u\to M_{uv}$ at each vertex $u$ incident to an adge $uv$. %We show that some of the results for splines over a ring can be extended to splines over modules. 

Bowden, Hagen, King and Reinders \cite{BHKR} gave a characterization of flow-up bases for generalized splines over the integers. Later,  we  gave a modification of their theorem to get a set of flow-up generators over $\mathbb{Z} /m \mathbb{Z}$ in \cite{AD}. Here, we show that the theorem also works for extending generalized splines.
\begin{Th}\label{basis cond}
	Let  $(G,\beta)$  be an edge-labeled graph.  The following are equivalent:
	\begin{itemize}
		\item The set $\{F^{(1)},\dots,F^{(n)}\}$ forms a module basis for $\hat{R}_G$.
		\item For each flow-up class $A^{(i)} =(0,\dots,0,g_{v_i},\dots,g_{v_n})$ the entry $g_{v_i}$ is a multiple of the entry $f_{v_i}^{(i)}$.
	\end{itemize}
	
\end{Th}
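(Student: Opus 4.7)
The plan is to prove both implications by exploiting the triangular structure of flow-up classes: the $j$-th basis element $F^{(j)}$ is zero on coordinates $v_1,\dots,v_{j-1}$, so when we write a spline as a combination of the $F^{(j)}$ and read coordinates in order $v_1,v_2,\dots$, each equation involves only the next unknown coefficient multiplied by a nonzero leading term.

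For the forward direction, suppose $\{F^{(1)},\dots,F^{(n)}\}$ is a module basis. Given an $i$-th flow-up class $A^{(i)}=(0,\dots,0,g_{v_i},\dots,g_{v_n})$, I expand $A^{(i)}=\sum_{j=1}^n c_j F^{(j)}$. Reading coordinate $v_1$ gives $0=c_1 f^{(1)}_{v_1}$, and since $f^{(1)}_{v_1}\ne 0$ and $\mathbb{Z}$ is an integral domain, $c_1=0$. Reading coordinate $v_2$ and using $c_1=0$ gives $0=c_2 f^{(2)}_{v_2}$, so $c_2=0$. Iterating up to coordinate $v_{i-1}$ shows $c_1=\cdots=c_{i-1}=0$. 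Reading coordinate $v_i$ then yields $g_{v_i}=c_i f^{(i)}_{v_i}$, which proves that $g_{v_i}$ is a $\mathbb{Z}$-multiple of $f^{(i)}_{v_i}$.

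For the backward direction, I split into linear independence and spanning. Linear independence follows from exactly the same triangular cascade: any relation $\sum c_j F^{(j)}=0$ forces $c_1=0$ from coordinate $v_1$, then $c_2=0$ from coordinate $v_2$, and so on, using $f^{(j)}_{v_j}\ne 0$ together with the domain property of $\mathbb{Z}$. For spanning, I perform a greedy reduction: for $A\in\hat{R}_G$ nonzero, let $i$ be the smallest index with $a_{v_i}\ne 0$. Then $A$ is an $i$-th flow-up class, so the hypothesis supplies $c_i\in\mathbb{Z}$ with $a_{v_i}=c_i f^{(i)}_{v_i}$. The difference $A-c_iF^{(i)}$ lies in the module $\hat{R}_G$, vanishes on coordinates $v_1,\dots,v_{i-1}$ (both summands do), and vanishes at $v_i$ by the choice of $c_i$; so either it is zero or it is a $k$-th flow-up class for some $k>i$. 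Iterating, the leading index strictly increases, so after at most $n$ steps the process terminates with $A$ written as an integer combination of the $F^{(j)}$.

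The only delicate point is verifying that the reduction step in the spanning argument keeps the remainder inside the flow-up framework so that the hypothesis can be reapplied; this is precisely the observation that zeros in the leading coordinates persist under subtraction of a flow-up class of smaller or equal starting index. Everything else reduces to ordinary cancellation in $\mathbb{Z}$ and the definitional fact that $f^{(j)}_{v_j}\ne 0$ for each $j$.
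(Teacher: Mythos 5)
Your proof is correct and follows essentially the same route as the paper: the forward direction uses the triangular cascade to force the first $i-1$ coefficients to vanish, and the spanning half of the converse is the same successive subtraction of multiples of $F^{(i)}$ that the paper carries out by finite induction. The only difference is cosmetic—you make the linear-independence step explicit (the paper leaves it implicit) and phrase the reduction as a greedy descent on the leading index rather than an induction over all coordinates.
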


\begin{proof}	
	Suppose that the set $\{F^{(1)},\dots,F^{(n)}\}$ forms a basis for $\hat{R}_G$. Let   $A^{(i)} =(0,\dots,0,g_{v_i},\dots,g_{v_n})$ be a spline in  $\hat{R}_G$ with $i-1$ leading zeros. We can write $A^{(i)}$ as a linear combination of the splines $F^{(1)},\dots,F^{(n)}$ because they form a  basis. Since  $A^{(i)}$ has $i-1$ leading zeros, $A^{(i)} = a_iF^{(i)}+\cdots+a_nF^{(n)}$ for some $a_i,\dots,a_n \in \mathbb{Z}$. It follows that the $i$-th entry of $A^{(i)}$ is $g_{v_i} = a_if_{v_i}^{(i)}+a_{i+1}0+\cdots+a_n0=a_if_{v_i}^{(i)} $.
	
	Conversely, we  assume that for each flow-up class 
	$A^{(i)} =(0,\dots,0,g_{v_i},\dots,g_{v_n})$  the entry $g_{v_i}$ is a multiple of the entry $f_{v_i}^{(i)}$ of any corresponding flow-up class $F^{(i)}$, we show that  the  set $\{F^{(1)},\dots,F^{(n)}\}$ forms a module basis for $\hat{R}_G$.
	Let $A = (h_{v_1},\dots,h_{v_n})$ be an arbitrary spline with  $h_{v} \in m_v \mathbb{Z}$ at each vertex $v$ and $\beta(uv)= \mathbb{Z}/r_{uv} \mathbb{Z}$ at each edge $uv$. We will show by a finite induction that for each $j\in \{1,2,\ldots,n\}$ we can write		 
	$$A = A_j^{'} + \sum_{k=1}^{j} a_kF^{(k)}, $$
	where $A_j^{'}$ is a spline with (at least) $j$ leading zeros. The base case is when $j=1$. By assumption, we have $h_{v_1} = a_1f_{v_1}^{(1)}$ for some $a_1\in \mathbb{Z}$. Therefore, we can write  $A$ as
	$$ A= (0,h_{v_2} -a_1f_{v_2}^{(1)},\ldots,h_{v_n} -a_1f_{v_n}^{(1)}) 	+ a_1F^{(1)},$$
	where $F^{(1)}= (f_{v_1}^{(1)},\dots,f_{v_n}^{(1)})$. Here, it can be easily observed that
	$$A_1^{'} = (0,h_{v_2} -a_1f_{v_2}^{(1)},\ldots,h_{v_n} -a_1f_{v_n}^{(1)})$$ is a spline since  $h_{v_t} - a_t f_{v_t}^{(1)}\in m_{v_t} \mathbb{Z}$ for all $t=2,\dots,n$.
	Now  assume, as the induction hypothesis, that  the equality $A = A_{j-1}^{'} + \sum\limits_{k=1}^{j-1} a_kF^{(k)} $ holds for some $j\in \{2,3\ldots,n\}$, where $A_{j-1}^{'}$ is a spline with (at least) $j-1$ leading zeros.	We show that the same equality holds with $j-1$ replaced by $j$. Thus suppose that $$A= (0,\ldots,0,h_{v_j}^{'},\ldots,h_{v_n}^{'}) + \sum\limits_{k=1}^{j-1} a_kF^{(k)}.$$	
	Since  $A_{j-1}^{'}$ is a flow-up class, by assumption, we have $h_{v_j}^{'} = a_jf_{v_j}^{(j)}$ for some $a_j \in \mathbb{Z}$. It follows that $A$ can be written by
	$$ A =
	(0,\dots,0,h_{v_{j+1}}^{'} - a_jf_{v_{j+1}}^{(j)},\ldots,h_{v_n}^{'} - a_jf_{v_n}^{(j)}) + \sum\limits_{k=1}^{j} a_kF^{(k)}.$$ 
	By letting $A_j^{'}=(0,\dots,0,h_{v_{j+1}}^{'} - a_jf_{v_{j+1}}^{(j)},\ldots,h_{v_n}^{'} - a_jf_{v_n}^{(j)})$, we obtain  $A = A_j^{'} + \sum\limits_{k=1}^{j} a_kF^{(k)} $. Since $a_j \in \mathbb{Z}, h_{v_t} \in m_{v_t} \mathbb{Z}$ and $f_{v_t}^{(j)} \in m_{v_t} \mathbb{Z}$, the difference $h_{v_t} - a_t f_{v_t}^{(j)}\in m_{v_t} \mathbb{Z}$ for all $t=j+1,\dots,n$, $A_j^{'}$ is a spline. 
	In particular, we have  $A = A_n^{'} + \sum\limits_{k=1}^{n} a_kF^{(k)} $. Since $A_n^{'}=(0,\dots,0)$ is a spline with $n$ leading zeros, we obtain  $A = \sum\limits_{k=1}^{n} a_kF^{(k)} $.
	
\end{proof}

\section{A Flow-up Class for Path Graphs over $\mathbb{Z}$ }\label{Paths}

In this section, we will give an algorithm to construct a flow-up basis for the extending generalized splines on path graphs. We assume that $(P_n,\beta)$ is an edge-labeled path graph as in Figure \ref{Path n} and  $m_j$ and $r_k$ are arbitrary integers with $j=1,2, \ldots, n$ and $k=1,2, \ldots, n-1$. 

In the following theorem, we will give the characterization of the first nonzero entry of a flow-up class $F^{(i)}$ on  $(P_n,\beta)$.

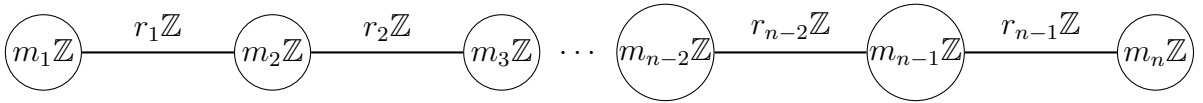
\begin{figure}[h!]
	\begin{center}

		\begin{tikzpicture}
			\node[main node] (1) {$m_1 \mathbb{Z}$};
			\node[main node] (2) [right = 2cm of 1]  {$m_2 \mathbb{Z}$};
			\node[main node] (3) [right = 2cm of 2]  {$m_3 \mathbb{Z}$};
			\node[right=.1cm of 3] {$\dots$};
			
			\node[main node] (4) [right = 1cm of 3]  {$m_{n-2} \mathbb{Z}$};
			\node[main node] (5) [right = 2cm of 4]  {$m_{n-1} \mathbb{Z}$};
			\node[main node] (6) [right = 2cm of 5]  {$m_n \mathbb{Z}$};

			\path[draw,thick]
			(1) edge node [above]{$ r_1 \mathbb{Z}$} (2)
			(2) edge node [above]{$r_2 \mathbb{Z}$} (3)

			(4) edge node [above]{$r_{n-2} \mathbb{Z}$} (5)
			(5) edge node [above]{$ r_{n-1} \mathbb{Z}$} (6);

		\end{tikzpicture}

	\end{center}
	
	\caption{An edge-labeled path graph $(P_n,\beta)$ } \label{Path n}
\end{figure}

\begin{Th}\label{multiple}
	Let $(P_n,\beta)$ be an edge-labeled path graph as in Figure \ref{Path n}. Let 	 $F^{(i)}=(0,\dots,0,f_{v_i}^{(i)}, \dots, f_{v_n}^{(i)})$ be  a flow-up class with $f_{v_i}^{(i)} \neq 0$ for $i=1,2,\ldots,n$ and $f_{v_s}^{(i)} =0$ for all $s <i$. Then  $f_{v_i}^{(i)}$ is a multiple of 
	$$[m_i,r_{i-1},(m_{i+1},r_i),(m_{i+2},r_i,r_{i+1}),\dots,(m_n,r_i,r_{i+1},\dots,r_{n-1})]$$
	for $i=1,2,\ldots,n$.
	
\end{Th}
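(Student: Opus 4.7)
The plan is to show that $f_{v_i}^{(i)}$ is divisible by each integer appearing inside the outer $[\;\cdot\;]$, whence it is divisible by their least common multiple. There are two types of divisibilities to handle, and I would treat them separately.

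The two immediate divisors come straight from the definitions. Since $f_{v_i}^{(i)}\in M_{v_i} = m_i\mathbb{Z}$, we get $m_i \mid f_{v_i}^{(i)}$. For $i\ge 2$, the spline condition on the edge $v_{i-1}v_i$ reads $f_{v_i}^{(i)} \equiv f_{v_{i-1}}^{(i)} = 0 \pmod{r_{i-1}}$ because $F^{(i)}$ is an $i$-th flow-up class, so $r_{i-1}\mid f_{v_i}^{(i)}$. (For $i=1$ this $r_{i-1}$ term is vacuous and may be dropped.)

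The substantive work is establishing $(m_j, r_i, r_{i+1}, \dots, r_{j-1}) \mid f_{v_i}^{(i)}$ for each $j$ with $i+1 \le j \le n$. For each such $j$, the vertex condition lets me write $f_{v_j}^{(i)} = m_j k_j$ with $k_j\in\mathbb{Z}$, and the spline condition on edge $v_j v_{j+1}$ produces an integer $c_j$ with
\begin{equation*}
m_j k_j - m_{j+1} k_{j+1} = r_j c_j .
\end{equation*}
Starting from $f_{v_i}^{(i)} = m_i k_i$ and iterating this relation, a straightforward induction on $j$ yields
\begin{equation*}
f_{v_i}^{(i)} = m_j k_j + r_{j-1} c_{j-1} + r_{j-2} c_{j-2} + \cdots + r_i c_i
\end{equation*}
for every $j$ in the range $i+1 \le j \le n$. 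Therefore $f_{v_i}^{(i)} \in m_j\mathbb{Z} + r_i\mathbb{Z} + \cdots + r_{j-1}\mathbb{Z} = (m_j, r_i, \dots, r_{j-1})\mathbb{Z}$, which is the required divisibility.

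Putting the two observations together, $f_{v_i}^{(i)}$ is a common multiple of $m_i$, $r_{i-1}$, $(m_{i+1}, r_i)$, $\dots$, $(m_n, r_i, \dots, r_{n-1})$, hence a multiple of their lcm, which is exactly the claim. The only delicate point is the bookkeeping in the inductive unfolding of the chain of congruences along the path; everything else is a direct reading of the vertex and edge conditions defining $\hat{R}_G$.
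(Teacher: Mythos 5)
Your proof is correct, and it reaches the paper's conclusion by a slightly different route. The paper runs a backward induction along the path: starting from $f_{v_n}^{(i)}\in m_n\mathbb{Z}$ it shows $f_{v_{n-1}}^{(i)}\in[m_{n-1},(r_{n-1},m_n)]\mathbb{Z}$, then $f_{v_{n-2}}^{(i)}\in[m_{n-2},(r_{n-2},[m_{n-1},(r_{n-1},m_n)])]\mathbb{Z}$, and so on, arriving at a single nested expression $[m_i,r_{i-1},(r_i,[m_{i+1},(r_{i+1},[\ldots,(r_{n-1},m_n)]\ldots)])]$ which it then flattens to the stated lcm of gcds; this last step silently uses the distributivity of gcd over lcm, $(a,[b,c])=[(a,b),(a,c)]$, applied repeatedly. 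You instead prove each divisor separately: $m_i$ and $r_{i-1}$ from the vertex and flow-up conditions, and, for each $j>i$, the telescoping identity $f_{v_i}^{(i)}=m_jk_j+\sum_{l=i}^{j-1}r_lc_l$ gives $f_{v_i}^{(i)}\in(m_j,r_i,\ldots,r_{j-1})\mathbb{Z}$ directly, after which you take the lcm once at the end. Your decomposition avoids the nested-expression bookkeeping and the gcd/lcm distributivity identity entirely, at the cost of not recording the intermediate divisibility information on the entries $f_{v_k}^{(i)}$ for $k>i$, which the paper's backward induction produces as a by-product and which is what the subsequent existence argument (Theorem \ref{pexists}) exploits. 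One cosmetic point: your telescoping relations $m_jk_j-m_{j+1}k_{j+1}=r_jc_j$ are needed only for $j=i,\ldots,n-1$, so the phrase ``for each such $j$ with $i+1\le j\le n$'' should be adjusted, and for $i=1$ your convention of dropping $r_0$ matches the paper's convention $r_0=1$.
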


\begin{proof}
	
	Let $F^{(i)}=(0,\dots,0,f_{v_i}^{(i)}, \dots, f_{v_n}^{(i)})$ be a flow-up class. By spline conditions, we have  $ f_{v_i}^{(i)} - 0 =f_{v_i}^{(i)}\in  r_{i-1} \mathbb{Z} ,  f_{v_j}^{(i)}- f_{v_{j+1}}^{(i)}	\in r_j \mathbb{Z}$ for all $j=i,i+1,\dots,n-1$. By solving the system of the equations together with $f_{v_k}^{(k)} \in m_k \mathbb{Z}$ at each vertex $v_k$,  we obtain inductively
	\begin{multline*}
		f_{v_{n-1}}^{(i)} - f_{v_n}^{(i) } \in r_{n-1} \mathbb{Z} \Rightarrow  f_{v_{n-1}}^{(i)} \in  f_{v_n}^{(i) } + r_{n-1} \mathbb{Z} \subset m_n \mathbb{Z}+ r_{n-1} \mathbb{Z} = (r_{n-1},m_n) \mathbb{Z} 
	\Rightarrow  f_{v_{n-1}}^{(i)} \in  [m_{n-1},(r_{n-1},m_n)] \mathbb{Z},\\
	f_{v_{n-2}}^{(i)} - f_{v_{n-1}}^{(i) } \in r_{n-2} \mathbb{Z} \quad \text{and} \quad f_{v_{n-1}}^{(i)} \in  [m_{n-1},(r_{n-1},m_n)] \mathbb{Z} \\
	\Rightarrow f_{v_{n-2}}^{(i)} \in (r_{n-2},[m_{n-1},(r_{n-1},m_n)]) \mathbb{Z}\Rightarrow f_{v_{n-2}}^{(i)} \in [m_{n-2}, (r_{n-2},[m_{n-1},(r_{n-1},m_n)])\mathbb{Z},\\
	\vdots\\
		f_{v_i}^{(i)}- f_{v_{i+1}}^{(i)} \in r_i \mathbb{Z} \quad \text{and} \quad f_{v_{i+1}}^{(i)} \in  [m_{i+1}, (r_{i+1},[\ldots,(r_{n-1},m_n)]\ldots)] \mathbb{Z} \\
		\Rightarrow f_{v_i}^{(i)} \in (r_i, [m_{i+1}, (r_{i+1},[\ldots,(r_{n-1},m_n)]\ldots)]) \mathbb{Z} \\ \Rightarrow f_{v_i}^{(i)} \in [m_i,r_{i-1}, (r_i, [m_{i+1}, (r_{i+1},[\ldots,(r_{n-1},m_n)]\ldots)])] \mathbb{Z}
	\end{multline*}
since  $ f_{v_i}^{(i)} - 0 =f_{v_i}^{(i)}\in  r_{i-1} \mathbb{Z} $ and $f_{v_i}\in m_i\mathbb Z$.
	Hence, the first non zero entry of $F^{(i)}$ can be written by
	\begin{align*}
		f_{v_i}^{(i)}&= t_i [m_i,r_{i-1},   (r_i, [m_{i+1}, (r_{i+1},[\ldots,(r_{n-1},m_n)]\ldots)])]\\
		&=t_i [m_i,r_{i-1},(m_{i+1},r_i),(m_{i+2},r_i,r_{i+1}),\dots,(m_n,r_i,r_{i+1},\dots,r_{n-1})]
	\end{align*}
	for some $t_i \in \mathbb{Z}$ where $i=1,2,\ldots,n$.

\end{proof}

\begin{Cor}\label{$g_i with n$}
	
	Let $(P_n,\beta)$ be an edge-labeled path graph as in Figure \ref{Path n} and $F=(g_1,g_2,\ldots,g_n)$   a spline. Then  $g_i$ is a multiple of 
	$$[m_i,(m_{i+1},r_i),(m_{i+2},r_i,r_{i+1}),\dots,(m_n,r_i,r_{i+1},\dots,r_{n-1})].$$

\end{Cor}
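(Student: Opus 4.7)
The plan is to imitate the backward induction used in the proof of Theorem \ref{multiple}, but now without the constraint coming from a preceding zero entry, since $F$ is an arbitrary spline rather than a flow-up class. The argument proceeds from the rightmost vertex $v_n$ leftward, using at each step the spline condition $g_k - g_{k+1} \in r_k\mathbb{Z}$ together with the module condition $g_k \in m_k\mathbb{Z}$ to successively shrink the ideal that must contain $g_k$.

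Concretely, I would first observe that $g_n \in m_n\mathbb{Z}$, and then combine $g_{n-1} - g_n \in r_{n-1}\mathbb{Z}$ with $g_{n-1} \in m_{n-1}\mathbb{Z}$ to conclude $g_{n-1} \in [m_{n-1}, (m_n, r_{n-1})]\mathbb{Z}$. Iterating this one vertex at a time downward, I expect to obtain at each intermediate step
$$g_k \in [m_k, (m_{k+1}, r_k), (m_{k+2}, r_k, r_{k+1}), \ldots, (m_n, r_k, r_{k+1}, \ldots, r_{n-1})]\mathbb{Z},$$
which at $k = i$ is exactly the claim.

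The single technical ingredient is the distributive identity $(r_k, [a_1, \ldots, a_s]) = [(r_k, a_1), \ldots, (r_k, a_s)]$ between gcd and lcm over $\mathbb{Z}$, which is what converts the bound obtained at step $k+1$ into the stated form at step $k$ after further intersecting with $m_k\mathbb{Z}$. Since this identity is elementary, I do not expect a genuine obstacle; the main thing to watch is simply keeping the nested gcd/lcm expressions bookkept correctly as the induction unrolls.

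A slicker alternative I would consider is to deduce this corollary directly from Theorem \ref{multiple} itself by restricting $F$ to the sub-path on the vertices $v_i, v_{i+1}, \ldots, v_n$. That restriction is automatically a spline on a smaller edge-labeled path graph, and Theorem \ref{multiple} applied there with index $1$ produces the stated formula; the $r_{i-1}$ term of Theorem \ref{multiple} drops out here simply because the sub-path has no edge to the left of $v_i$ to impose such a congruence.
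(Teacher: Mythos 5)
Your argument is correct and is essentially the paper's own route: the corollary carries no separate proof there precisely because it is the backward induction from the proof of Theorem \ref{multiple} with the $r_{i-1}$-condition (coming from the preceding zero entry) dropped, which is also what your restriction-to-the-subpath alternative gives via the $r_0=1$ convention, the case $g_i=0$ being trivial. Your use of the gcd/lcm distributivity to flatten the nested expressions is the same implicit step the paper takes at the end of that proof, so there is no gap.
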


The next theorem provides an existence of minimal flow-up classes.
\begin{Th} \label{pexists}
	
	Let $(P_n,\beta)$ be an edge-labeled path graph  as in Figure \ref{Path n}.   There exists a flow-up class  $F^{(i)}=(0,\dots,0,f_{v_i}^{(i)}, \dots, f_{v_n}^{(i)})$ with the first nonzero entry  	
	$$[m_i,r_{i-1},(m_{i+1},r_i),(m_{i+2},r_i,r_{i+1}),\dots,(m_n,r_i,r_{i+1},\dots,r_{n-1})]$$
	for $i=1,2,\ldots,n$.	For $i=1$, we set $r_0=1$.

\end{Th}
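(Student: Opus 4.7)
The plan is to construct $F^{(i)}$ iteratively from left to right, setting $f_{v_i}^{(i)}$ equal to the stated value $L$ and then defining each subsequent entry using the Chinese Remainder Theorem (Theorem \ref{CRT}) with a strengthened divisibility condition that anticipates the tail of the path. The naive iterative construction fails because picking a CRT solution for $f_{v_{k}}^{(i)}$ that merely lies in $m_k\mathbb{Z}$ and is congruent to $f_{v_{k-1}}^{(i)}\mod r_{k-1}$ need not leave room to continue: the next step may be unsolvable. The remedy is to ensure, at every stage, enough extra divisibility in $f_{v_k}^{(i)}$ to guarantee that all future CRT subsystems remain solvable.

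Concretely, I would introduce the auxiliary ideal generator
\[
\Delta_k := [(m_{k+1}, r_k),\,(m_{k+2}, r_k, r_{k+1}),\,\ldots,\,(m_n, r_k, \ldots, r_{n-1})]
\]
for $k = i, i+1, \ldots, n-1$, together with the convention $\Delta_n := 1$, so that the target leading entry is precisely $L = [m_i,\,r_{i-1},\,\Delta_i]$. The core algebraic fact driving the whole construction is the one-step recurrence
\[
\Delta_{k-1} = \bigl(r_{k-1},\,[m_k,\,\Delta_k]\bigr),
\]
which follows from the distributivity of gcd over lcm in the PID $\mathbb{Z}$, namely $(a,[b_1,\ldots,b_p]) = [(a,b_1),\ldots,(a,b_p)]$, applied termwise to the expression for $\Delta_k$.

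Next, I would perform the induction. Set $f_{v_i}^{(i)} := L$; since $L = [m_i, r_{i-1}, \Delta_i]$, we immediately get $m_i \mid f_{v_i}^{(i)}$, $r_{i-1}\mid f_{v_i}^{(i)}$ (so the spline condition across the implicit zero on the left holds), and $\Delta_i \mid f_{v_i}^{(i)}$. For $k = i+1, \ldots, n$, I would choose $f_{v_k}^{(i)}$ as any solution to the system
\[
x \equiv f_{v_{k-1}}^{(i)} \pmod{r_{k-1}}, \qquad x \equiv 0 \pmod{[m_k,\,\Delta_k]}.
\]
By Theorem \ref{CRT}, this has a solution exactly when $(r_{k-1},\,[m_k,\,\Delta_k]) \mid f_{v_{k-1}}^{(i)}$, i.e.\ exactly when $\Delta_{k-1} \mid f_{v_{k-1}}^{(i)}$, which is guaranteed by the inductive hypothesis (since $[m_{k-1},\Delta_{k-1}] \mid f_{v_{k-1}}^{(i)}$). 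At step $k = n$ the condition $\Delta_n = 1$ simply drops out, leaving only the spline congruence and the membership $f_{v_n}^{(i)} \in m_n\mathbb{Z}$.

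The resulting tuple $F^{(i)} = (0,\ldots,0,f_{v_i}^{(i)},\ldots,f_{v_n}^{(i)})$ satisfies $f_{v_k}^{(i)} \in m_k\mathbb{Z}$ for all $k \geq i$ and $f_{v_k}^{(i)} - f_{v_{k-1}}^{(i)} \in r_{k-1}\mathbb{Z}$ for all $k \geq i+1$, so it is a spline; its leading term is $f_{v_i}^{(i)} = L$ by construction. The main obstacle, and the only substantive algebraic content, is establishing the recurrence $\Delta_{k-1} = (r_{k-1},[m_k,\Delta_k])$; the rest is a clean two-line CRT induction. This recurrence is exactly the bookkeeping reason why the lcm in the statement has the particular nested form $(m_j, r_i, r_{i+1}, \ldots, r_{j-1})$ for each $j > i$ rather than, say, $(m_j, r_{j-1})$ alone.
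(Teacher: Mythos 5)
Your proposal is correct and follows essentially the same route as the paper: the paper's proof also imposes at each step the strengthened congruence $f_{v_{k}}^{(i)}\equiv 0 \bmod [m_{k},(m_{k+1},r_{k}),\dots,(m_n,r_{k},\dots,r_{n-1})]$ (your $[m_k,\Delta_k]$) together with $f_{v_k}^{(i)}\equiv f_{v_{k-1}}^{(i)}\bmod r_{k-1}$, and checks the Chinese Remainder compatibility via the same gcd--lcm distributivity, inducting from $v_i$ rightward. Your $\Delta_k$ notation and the explicit recurrence $\Delta_{k-1}=(r_{k-1},[m_k,\Delta_k])$ just make explicit what the paper writes out termwise.
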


\begin{proof}
	We claim the existence of the entry $f_{v_{j+1}}^{(i)}$ for all $j=i,i+1,\ldots,n-1$ where $i=1,2,\ldots,n-1$. By solving spline conditions as in the proof of Theorem \ref{multiple}, we  obtain
	\begin{align*}
		f_{v_{j+1}}^{(i)} &\equiv 0 \quad \mod m_{j+1}\\
		f_{v_{j+1}}^{(i)} &\equiv 0 \quad \mod (m_{j+2},r_{j+1})\\
		&\vdots\\
		f_{v_{j+1}}^{(i)} &\equiv 0 \quad \mod (m_{n},r_{j+1},\dots,r_{n-1})\\	
		f_{v_{j+1}}^{(i)} &\equiv f_{v_j}^{(i)} \mod r_j.
	\end{align*}
	It follows that we have a system of these equations
	\begin{align*}
		f_{v_{j+1}}^{(i)} &\equiv\quad  0 \mod [m_{j+1},(m_{j+2},r_{j+1}),\dots,(m_{n},r_{j+1},\dots,r_{n-1})]\\
		f_{v_{j+1}}^{(i)} &\equiv f_{v_j}^{(i)} \mod r_j.
	\end{align*}
	From  the Chinese Remainder Theorem  the system has a solution for $f_{v_{j+1}}^{(i)}$ if and only if 
	\[	f_{v_j}^{(i)} \equiv 0 \mod ( r_j,[m_{j+1},(m_{j+2},r_{j+1}),\dots,(m_{n},r_{j+1},\dots,r_{n-1})])\]
	or equivalently
	\begin{equation}\label{pexists:crt}
		f_{v_j}^{(i)}\equiv 0 \mod [(m_{j+1},r_j),(m_{j+2},r_j,r_{j+1}),\dots,(m_n,r_j,r_{j+1},\dots,r_{n-1})].
	\end{equation}
	To show the existence of  $f_{v_{j+1}}^{(i)}$ for all $j=i,i+1,\ldots,n-1$ where $i=1,2,\ldots,n-1$, it is sufficient to prove that  Equation \ref{pexists:crt} holds. Firstly, we will show the existence of  $f_{v_{i+1}}^{(i)}$ and move on to  $f_{v_{i+2}}^{(i)}$. Then we continue inductively to prove the existence  of $f_{v_{j+1}}^{(i)}$ for all $j=i,i+1,\ldots,n$. Start with $j=i$.
	Let  \[f_{v_i}^{(i)}= [m_i,r_{i-1},(m_{i+1},r_i),(m_{i+2},r_i,r_{i+1}),\dots,(m_n,r_i,r_{i+1},\dots,r_{n-1})].\]
	Equation \ref{pexists:crt} holds for $f_{v_{i}}^{(i)}$.	Therefore there exists a solution for $f_{v_{i+1}}^{(i)}$ by the Chinese Remainder Theorem satisfying $f_{v_{i+1}}^{(i)} \equiv 0 \mod [m_{i+1},(m_{i+2},r_{i+1}),\dots,(m_{n},r_{i+1},\dots,r_{n-1})]$. It follows that  Equation \ref{pexists:crt} holds for $f_{v_{i+1}}^{(i)}$. Therefore by the Chinese Remainder Theorem,
	$f_{v_{i+2}}^{(i)}$ exists satisfying  $f_{v_{i+2}}^{(i)} \equiv 0 \mod [m_{i+2},(m_{i+3},r_{i+2}),\dots,(m_{n},r_{i+2},\dots,r_{n-1})]$. If we continue in this way, we can prove the existence of  $f_{v_{j+1}}^{(i)}$ for all $j=i,i+1,\ldots,n-1$. Thus, there exists a flow-up set $$\{(f_{v_1}^{(1)},\ldots,f_{v_n}^{(1)}),(0,f_{v_2}^{(2)},\ldots,f_{v_n}^{(2)}),\ldots, (0,\ldots,0,f_{v_n}^{(n)})\}$$ with the given minimal first nonzero entry of $F^{(i)}$.
	
\end{proof}

Next theorem shows that minimal flow-up classes form a basis.

\begin{Th} \label{basis}
	Let $(P_n,\beta)$ be an edge-labeled path graph  as in Figure \ref{Path n}.  Let
	
	\begin{equation*}
		B = \begin{Bmatrix}
			\left( \begin{array}{c}
				f_{v_n}^{(1)}\\
				f_{v_{n-1}}^{(1)}\\
				\vdots \\
				f_{v_3}^{(1)} \\
				f_{v_2}^{(1)}\\
				f_{v_1}^{(1)}
			\end{array} \right) , &

			\left( \begin{array}{c}
				f_{v_n}^{(2)}\\
				f_{v_{n-1}}^{(2)}\\
				\vdots \\
				f_{v_3}^{(2)} \\
				f_{v_2}^{(2)}\\
				0
			\end{array} \right) , &

			\ldots  & ,

			\left( \begin{array}{c}
				f_{v_n}^{(i)}\\
				\vdots\\
				f_{v_i}^{(i)}\\
				0\\
				\vdots \\
				0
			\end{array} \right) , &
			
			\ldots & ,
			
			\left( \begin{array}{c}
				f_{v_n}^{(n)}\\
				0\\
				0\\
				\vdots \\
				0 \\
				0
			\end{array} \right) 
		\end{Bmatrix}
	\end{equation*}
	be a flow-up class set	where each $F^{(i)} \in \hat{\mathcal{F}_i}$ is minimal with the first nonzero entry	
	$$[m_i,r_{i-1},(m_{i+1},r_i),(m_{i+2},r_i,r_{i+1}),\dots,(m_n,r_i,r_{i+1},\dots,r_{n-1})]$$
	for $i=1,2,\ldots,n$. Then $B$ forms a module basis over $\mathbb{Z}$.
\end{Th}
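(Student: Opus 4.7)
The plan is to invoke the basis characterization from Theorem~\ref{basis cond} and combine it with the divisibility information in Theorem~\ref{multiple}. By Theorem~\ref{basis cond}, to show that $B$ is a module basis it suffices to verify that for every flow-up class $A^{(i)}=(0,\ldots,0,g_{v_i},\ldots,g_{v_n})\in\hat{\mathcal F}_i$, the entry $g_{v_i}$ is a multiple of $f_{v_i}^{(i)}$. So I would split the proof into two purely bookkeeping steps: first note that such minimal flow-up classes $F^{(i)}$ actually exist (this is exactly Theorem~\ref{pexists}), so that the set $B$ in the statement is well-defined; second, show the divisibility condition.

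For the divisibility condition, I take an arbitrary $A^{(i)}\in\hat{\mathcal F}_i$. Because $A^{(i)}$ is itself an $i$-th flow-up class on $(P_n,\beta)$, Theorem~\ref{multiple} applies directly to it and yields that
\[
g_{v_i}\in\bigl[m_i,r_{i-1},(m_{i+1},r_i),(m_{i+2},r_i,r_{i+1}),\dots,(m_n,r_i,r_{i+1},\dots,r_{n-1})\bigr]\mathbb{Z}.
\]
But the bracketed integer on the right-hand side is precisely the value chosen for $f_{v_i}^{(i)}$ in the statement of the theorem. Hence $f_{v_i}^{(i)}\mid g_{v_i}$, which is exactly the hypothesis required by Theorem~\ref{basis cond}.

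Applying Theorem~\ref{basis cond} then concludes that $B$ is a module basis of $\hat R_G$ over $\mathbb{Z}$. I do not anticipate any genuine obstacle here: all of the real work (the existence of the minimal leading terms, the divisibility of any flow-up leading term by the minimal one, and the general abstract criterion for a flow-up set to be a basis) has already been done in Theorems~\ref{multiple}, \ref{pexists}, and \ref{basis cond}. The only minor subtlety is to remark on the $i=1$ case, where $r_0=1$ is used by convention so that the formula for $f_{v_1}^{(1)}$ makes sense and consistently reduces to $[m_1,(m_2,r_1),\dots,(m_n,r_1,\dots,r_{n-1})]$, matching Corollary~\ref{$g_i with n$}.
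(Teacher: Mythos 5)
Your proposal is correct and follows exactly the paper's route: the paper's proof of Theorem~\ref{basis} is simply ``It follows from Theorem~\ref{basis cond},'' and the details you supply—existence of the minimal classes via Theorem~\ref{pexists} and divisibility of any flow-up leading entry by the stated lcm via Theorem~\ref{multiple} (with the $r_0=1$ convention for $i=1$)—are precisely the implicit content of that citation.
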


\begin{proof}
	It follows from Theorem \ref{basis cond}.
\end{proof}

\begin{Nt*} By Corollary {\ref{$g_i with n$}}, we have 	$f_{v_i}^{(k)}=l_i^{(k)}s_i^{(k)}$
	where $s_i^{(k)} = [m_i, (r_i,[m_{i+1},\ldots,(m_n,r_{n-1})]\ldots)]$. It follows that
	by Chinese Remainder Theorem (or by Proposition \ref{Bowden}) there is a solution of  the system 
	\begin{align*}
		f_{v_t}^{(k)}&\equiv 0 \quad\,\, \mod s_t^{(k)}\\
	f_{v_t}^{(k)} &\equiv f_{v_{t-1}}^{(k)} \mod r_{t-1}
	\end{align*}
	 where $s_t^{(k)} = [m_t, (r_t,[m_{t+1},\ldots,(m_n,r_{n-1})]\ldots)]$
	with
	$$ l_t^{(k)}\equiv \bigg(\bigg(\frac{s_t^{(k)}}{(s_t^{(k)},r_{t-1})}\bigg)^{-1} \mod \bigg(\frac{r_{t-1}}{(s_t^{(k)},r_{t-1})}\bigg)\bigg) \frac{ f_{v_{t-1}}^{(k)}}{(s_t^{(k)},r_{t-1})} \mod \bigg(\frac{r_{t-1}}{(s_t^{(k)},r_{t-1})}\bigg). $$

\end{Nt*}

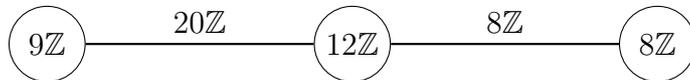
\begin{figure}[h]
	\begin{center}

		\begin{tikzpicture}
			\node[main node] (1) {$9 \mathbb{Z}$};
			\node[main node] (2) [right = 3cm of 1]  {$12 \mathbb{Z}$};
			\node[main node] (3) [right = 3cm of 2] {$8 \mathbb{Z}$};
			
			\path[draw,thick]
			(1) edge node [above]{$ 20 \mathbb{Z}$} (2)
			(2) edge node [above]{$ 8 \mathbb{Z}$} (3);

		\end{tikzpicture}

	\end{center}
	
	\caption{Example of an edge-labeled path graph $(P_3,\beta)$} \label{ex}

\end{figure}

\begin{Ex}
	
	Consider the edge-labeled path graph $(P_3,\beta)$ as in Figure \ref{ex}. Then a
	basis B can be obtained by using the note above and Theorem \ref{basis} as follows:
	\begin{gather*}
	f_{v_1}^{(1)}=  36,\\
	f_{v_2}^{(1)}=l_2^{(1)}s_2^{(1)} \text{ with } l_2^{(1)}\equiv (6^{-1} \mod 5 ) 9 \equiv 4 \mod 5, \\
	f_{v_3}^{(1)}=l_3^{(1)}s_3^{(1)}  \text{ with }  l_3^{(1)} \equiv  (1^{-1} \mod 1 ) 12 \mod 1 \text{ or }l_3^{(1)}= 0; \\	
	f_{v_1}^{(1)}=0, f_{v_2}^{(2)}= 120,\\
	f_{v_3}^{(2)}=l_3^{(2)}s_3^{(2)}  \text{ with }  l_3^{(2)}\equiv (1 ^{-1} \mod 1)15 \mod 1  \text{ or } l_3^{(2)}=0;  \\
	f_{v_1}^{3}=f_{v_2}^{(3)}=0, f_{v_3}^{(3)}=  8.
	\end{gather*}
A flow-up basis $B$ is given by

	\begin{equation*}
		B = \begin{Bmatrix}
			\left( \begin{array}{c}
				0 \\
				96 \\
				36
			\end{array} \right) , &

			\left( \begin{array}{c}
				0  \\
				120 \\
				0 \\
			\end{array} \right), &
			
			\left( \begin{array}{c}
				8 \\
				0 \\
				0 \\
			\end{array} \right)
			
		\end{Bmatrix}.
	\end{equation*}

\end{Ex}

\section{Longest Paths Technique}

In this section, we develop a technique for constructing a flow-up basis on arbitrary graphs based on the argument used in paths in Section \ref{Paths}, called a longest paths technique.

We assume that  $(G, \beta)$ is an edge-labeled graph together with a $\mathbb{Z}$-module $M_{v_i}= m_i \mathbb{Z}$ at each vertex  and  $\beta(v_iv_j)= \mathbb{Z}/ r_{ij} \mathbb{Z}$ at each edge.  For simplicity, we associate $r_{ij} \mathbb{Z}$ to each edge label $ l_{ij}=\beta(v_iv_j)= \mathbb{Z} / r_{ij} \mathbb{Z} $.

\begin{Def}
	Let  $(G,\beta)$  be an edge-labeled graph.   
	A \emph{trail} is an ordered sequence of vertices and edges $v_j,e_{j-1},v_{j-1},\ldots,v_{i+1},e_{i},v_i$ in which no edge is repeated.
	A trail from $v_j$ to $v_i$ is a path graph denoted by $P_{ji}$. We will use a notation $P_{ji}= (v_je_{j-1}v_{j-1}\cdots v_{i+1}e_{i}v_i)$  or  $(m_jr_{j-1}m_{j-1}\cdots m_{i+1}r_{i}m_i)$  corresponding to the labels of vertices and edges here. For a fixed $i$ we consider all trails $P_{ki}$ from $v_k$ to $v_i$ for all $k$. A longest path $P_{ji}$ from $v_j$ to $v_i$  is defined a trail of maximum length of all trails $P_{ki}$ under the strict inclusion $ P_{ki} \subset P_{li}$ for all $k,l$. When $j$ is not important we say that a longest path $P_{ji}$  from $v_j$ to $v_i$ is a longest path $P_{ji}$ of $v_i$. The set of longest paths $P_{ji}$ of $v_i$ is denoted by $\mathcal{P}^{i}$.
\end{Def}

We denote the greatest common divisor of the edge labels on   $P_{ji}$ by $(p^{(i,j)})$. The set of greatest common divisors $(p^{(i,j)})$ on all trails $P_{ij}$  is represented as $\{(p^{(i,j)})\}.$

\begin{Th}\label{longmult}
	
	Let $(G,\beta)$ be an edge-labeled graph and  $H=(h_1,h_2,\ldots,h_n)$   a spline.  Let  $\mathcal{P}^{i}$ be the set of the longest paths $P_{ji}=(m_jr_{j-1}m_{j-1}\cdots m_{i+1}r_im_i)$  and a spline on each longest path $P_{ji}$ together with the $i$-th entry  divisible by
	$$ [m_i,(m_{i+1},r_i),(m_{i+2},r_i,r_{i+1}),\dots,(m_j,r_i,r_{i+1},\dots,r_{j-1})].$$ Then $h_i$ is determined by the $i$-th entries of $H$ on each longest path. In other words, $h_i$ is divisible by  the least common multiple of all the $i$-th entries.

\end{Th}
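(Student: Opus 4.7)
\medskip

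\noindent\textbf{Proof proposal.} The plan is to reduce the statement to the path case that was handled in Corollary \ref{$g_i with n$}. The key observation is that if $H=(h_1,\dots,h_n)$ is a spline on $(G,\beta)$ and $P_{ji}=(v_je_{j-1}v_{j-1}\cdots v_{i+1}e_iv_i)$ is any trail from $v_j$ to $v_i$, then the restriction $H|_{P_{ji}}=(h_j,h_{j-1},\dots,h_{i+1},h_i)$ is a spline on the edge-labeled path graph whose vertex labels are $m_j\mathbb{Z},m_{j-1}\mathbb{Z},\dots,m_i\mathbb{Z}$ and whose edge labels are $r_{j-1}\mathbb{Z},\dots,r_i\mathbb{Z}$. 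Indeed, both the vertex condition $h_v\in m_v\mathbb{Z}$ for every $v$ on the trail and the edge condition $h_u-h_v\in r_{uv}\mathbb{Z}$ for every consecutive pair on the trail are inherited directly from $H$ being a spline on $G$.

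\medskip

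\noindent First I would state this restriction principle explicitly as the initial step. Then I would apply Corollary \ref{$g_i with n$} to the restricted spline $H|_{P_{ji}}$, read off at the vertex $v_i$, which lies at position $1$ of the path (in the bottom-to-top ordering of the path $P_{ji}$). Corollary \ref{$g_i with n$} then yields
\[
h_i \in \bigl[m_i,(m_{i+1},r_i),(m_{i+2},r_i,r_{i+1}),\dots,(m_j,r_i,r_{i+1},\dots,r_{j-1})\bigr]\mathbb{Z}.
\]
This is exactly the divisibility statement claimed on the individual longest path $P_{ji}$.

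\medskip

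\noindent Having established divisibility on every single longest path, I would then combine these constraints across all $P_{ji}\in\mathcal{P}^i$. Since $h_i$ is a multiple of every $d_{ji}:=[m_i,(m_{i+1},r_i),\dots,(m_j,r_i,\dots,r_{j-1})]$ as $P_{ji}$ ranges over $\mathcal{P}^i$, by definition of the least common multiple we obtain
\[
h_i \in \bigl[\,d_{ji}\;:\;P_{ji}\in\mathcal{P}^i\,\bigr]\,\mathbb{Z},
\]
which is the assertion of the theorem.

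\medskip

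\noindent The step I expect to be the main (if modest) obstacle is the initial reduction: one has to verify carefully that the sequence of vertices and edges of a trail $P_{ji}$, regarded as an abstract edge-labeled path graph, really does carry the restricted vertex labeling as a spline, so that Corollary \ref{$g_i with n$} applies verbatim. Once the restriction principle is in hand the remainder is essentially a definitional passage to the least common multiple, and no additional compatibility (such as a Chinese-Remainder-type hypothesis across different longest paths) is needed because we are only deriving a necessary divisibility condition on $h_i$, not constructing $H$.
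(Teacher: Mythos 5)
Your proposal is correct and is essentially the paper's own argument: the paper's proof consists of the single line ``It follows from Corollary \ref{$g_i with n$}'', which is exactly your restriction-to-a-trail step followed by applying that corollary on each longest path and passing to the least common multiple. You have merely made explicit the restriction principle and the final lcm step that the paper leaves implicit.
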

\begin{proof}
	It follows from Corollary \ref{$g_i with n$}.

\end{proof}

\begin{Cor}
	Let $(G,\beta)$ be an edge-labeled graph and   $H=(h_1,h_2,\ldots,h_n)$   a spline. Let $P_{ji}$ be a trail	
	and $\{(p^{(i,j)})\}$  the set of greatest common divisors of the edge labels on $P_{ji}$.
	Then  \[[m_i,\{(m_j,[ \{(p^{(i,j)})\}]) \mid \forall j\neq i \}] \] divides $h_i$  for all $j$. 	
\end{Cor}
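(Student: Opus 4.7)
The plan is to deduce the stated divisibility by fixing each vertex $v_j$ with $j\neq i$ and examining every trail from $v_i$ to $v_j$, and then combining the resulting constraints on $h_i$. The engine driving the argument is a single distributive identity in the divisibility lattice of $\mathbb{Z}$, namely $[(a,b_1),\ldots,(a,b_t)]=(a,[b_1,\ldots,b_t])$, which converts an outer lcm of inner gcds into the inner gcd of an outer lcm; that identity is exactly what produces the expression $(m_j,[\{(p^{(i,j)})\}])$ appearing in the statement.

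First I would fix $j\neq i$ and a particular trail $P_{ji}$ from $v_j$ to $v_i$ with consecutive edges $e_1,\ldots,e_k$ labelled $r_{e_1}\mathbb{Z},\ldots,r_{e_k}\mathbb{Z}$. Telescoping the spline condition along this trail gives $h_i-h_j\in r_{e_1}\mathbb{Z}+\cdots+r_{e_k}\mathbb{Z}=(r_{e_1},\ldots,r_{e_k})\mathbb{Z}=(p^{(i,j)})\mathbb{Z}$, since a finite sum of principal ideals of $\mathbb{Z}$ is the principal ideal generated by the gcd of their generators. Combining this with the vertex constraint $h_j\in m_j\mathbb{Z}$ yields $h_i\in(m_j,p^{(i,j)})\mathbb{Z}$, that is, $(m_j,p^{(i,j)})\mid h_i$ for this one trail.

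Next I would let $P_{ji}$ range over all trails from $v_j$ to $v_i$. Each such trail forces $(m_j,p^{(i,j)})\mid h_i$, so $h_i$ is divisible by the least common multiple of all these numbers; by the distributive identity above with $a=m_j$ this lcm equals $(m_j,[\{(p^{(i,j)})\}])$. Letting $j$ range over all vertices with $j\neq i$ gives divisibility of $h_i$ by each $(m_j,[\{(p^{(i,j)})\}])$, and since also $h_i\in m_i\mathbb{Z}$, taking one more lcm produces
$$\bigl[m_i,\{(m_j,[\{(p^{(i,j)})\}])\mid j\neq i\}\bigr]\,\bigm|\,h_i,$$
as claimed. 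One could alternatively invoke Theorem \ref{longmult} pair-by-pair on $(i,j)$ to package the same information.

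The main obstacle is simply careful bookkeeping of the nested gcd/lcm quantifiers together with a correct invocation of the distributive identity in $\mathbb{Z}$, which itself is a consequence of distributivity of the divisibility lattice. The spline-theoretic input is only the one-line telescoping observation along a trail, so no further existence or algorithmic content is needed.
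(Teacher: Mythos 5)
Your argument is correct, and it is slightly more self-contained than the route the paper implicitly intends. The paper states this corollary without proof, immediately after Theorem \ref{longmult}, so the intended derivation is: restrict the spline $H$ to each trail (viewed as an edge-labeled path), invoke the path-graph divisibility formula of Theorem \ref{multiple}/Corollary \ref{$g_i with n$} to get that $h_i$ is divisible by the nested expression $[m_i,(m_{j_1},r_{j_1}),(m_{j_2},r_{j_1},r_{j_2}),\dots]$ on each trail, and then observe that ranging over all trails and keeping only the endpoint terms $(m_j,(p^{(i,j)}))$ repackages this as the stated lcm. You instead bypass the path formula entirely: telescoping the edge conditions along a single trail gives $h_i\equiv h_j \pmod{(p^{(i,j)})}$, hence $(m_j,p^{(i,j)})\mid h_i$, and the distributive identity $[(m_j,p_1),\dots,(m_j,p_t)]=(m_j,[p_1,\dots,p_t])$ (which the paper records, in commented-out form, as its Lemma on gcd/lcm) assembles these into the claimed divisor. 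What your approach buys is a one-line spline-theoretic input and no dependence on the inductive computation behind Theorem \ref{multiple}; what the paper's route buys is consistency with its machinery, since the same path formula is reused later (e.g.\ in Theorems \ref{minimal} and \ref{exist}), and per trail it actually records the stronger constraint involving all intermediate vertices, which your version recovers only by passing to sub-trails ending at those vertices. The only points to keep in mind are cosmetic: the lcm you take over trails is finite because trails repeat no edge, and if some $v_j$ is not connected to $v_i$ by any trail the corresponding term should simply be omitted, a boundary case the paper also leaves implicit.
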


\begin{Th}\label{minimal}	Let  $(G,\beta)$  be an edge-labeled graph.	 Let  $\mathcal{P}^{1}$ be the set of the longest paths $P_{j1}$ of $v_1$. Then there exists a minimal spline $F=(f_{v_1},f_{v_2},\dots,f_{v_j})$ on each longest path $P_{j1}=(m_jr_{j-1}m_{j-1}\cdots m_2r_{1}m_1)$ with $f_{v_1}=[m_1,(m_2,r_1),(m_3,r_1,r_2),\dots,(m_j,r_1,r_2,\dots,r_{j-1})]$.
	
\end{Th}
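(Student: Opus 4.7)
The plan is to reduce the statement to results that have already been established for path graphs in Section \ref{Paths}. Since a longest path $P_{j1}$ (viewed with the restricted edge labels and vertex modules inherited from $(G,\beta)$) is itself an edge-labeled path graph in the sense of Section \ref{Paths}, every structural fact proved there applies verbatim to $P_{j1}$.

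First I would invoke Theorem \ref{pexists} on the edge-labeled path $P_{j1}=(m_jr_{j-1}m_{j-1}\cdots m_2 r_1 m_1)$ with the choice $i=1$. Recalling the convention $r_0=1$ that appears in the statement of Theorem \ref{pexists}, the formula for the first non-zero entry of the flow-up class $F^{(1)}$ collapses to
\[
[m_1,\,1,\,(m_2,r_1),\,(m_3,r_1,r_2),\,\dots,\,(m_j,r_1,r_2,\dots,r_{j-1})]
= [m_1,(m_2,r_1),(m_3,r_1,r_2),\dots,(m_j,r_1,r_2,\dots,r_{j-1})].
\]
Thus Theorem \ref{pexists} provides a spline $F=(f_{v_1},f_{v_2},\dots,f_{v_j})$ on $P_{j1}$ whose first entry is exactly the prescribed value. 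This handles existence.

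Next, to argue minimality, I would apply Corollary \ref{$g_i with n$} to the same path $P_{j1}$, again with the index $i=1$. That corollary forces the first coordinate of \emph{any} spline on $P_{j1}$ to be a multiple of
\[
[m_1,(m_2,r_1),(m_3,r_1,r_2),\dots,(m_j,r_1,r_2,\dots,r_{j-1})].
\]
Consequently, this value generates the ideal $LT(\widehat{\mathcal{F}_1})$ of leading terms of first flow-up classes on $P_{j1}$, and the spline $F$ produced above attains it. By the definition of minimality given in Section \ref{Preliminaries}, $F$ is a minimal element of $\widehat{\mathcal{F}_1}$ on each longest path $P_{j1}\in\mathcal{P}^{1}$.

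I do not expect any serious obstacle: the essential content — an existence proof via the Chinese Remainder Theorem and a lower bound via iterated spline conditions — has already been developed in Theorems \ref{multiple} and \ref{pexists}. The only point that warrants explicit mention is the normalization $r_0=1$ (since $v_1$ is an endpoint of $P_{j1}$ and has no incoming edge label to impose), which is what allows the formula from Theorem \ref{pexists} to match the formula in the present statement.
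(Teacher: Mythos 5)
Your proposal is correct and follows essentially the same route as the paper, which simply cites Theorem \ref{pexists} (with $i=1$ and the convention $r_0=1$) for this result. You additionally spell out the minimality step via Corollary \ref{$g_i with n$}, which the paper leaves implicit; this is a welcome clarification but not a different argument.
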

\begin{proof} It is obvious by Theorem \ref{pexists}.

\end{proof}

\begin{Def}[Zero Vertex Reduction]\label{graphre}
	Let  $(G,\beta)$  be an edge-labeled graph and $F=(f_{v_1},\ldots,f_{v_n})$ a spline with $f_{v_i}=0$ for  a unique $i$. We define a new graph $(G_{zred},\beta_{zred})$ associated to the zero-labeled vertex $v_i$ is an edge-labeled graph defined by removing the vertex $v_i$  and all the incident edges $v_iv_{i_t}$ of $G$ but keep the adjacent vertices $v_{i_t}$ by relabeling $[m_{v_i},r_{v_iv_{i_t}}]\mathbb{Z}$. We call such a graph $(G_{zred},\beta_{zred})$ a  reduced graph associated to a zero-labeled vertex $v_i$.
\end{Def}

This definition can be repeated on zero-labeled vertices successively. This graph is not necessarily connected.

To determine the smallest first nonzero entry of a flow-up class $F^{(i)}=(0,\dots,0,f_{v_i}^{(i)}, \dots, f_{v_n}^{(i)})$ in $\hat R_{(G,\beta)}$ is the same as to determine the smallest first nonzero entry of a flow-up class $F^{(i)}=(f_{v_i}^{(i)}, \dots, f_{v_n}^{(i)})$ in $\hat{R}_{(G_{zred},\beta_{zred})}$. It follows the proposition and the theorem below.

\begin{Prop}\label{remove}
	Let  $(G,\beta)$  be an edge-labeled graph and $F^{(i)}=(0,\dots,0,f_{v_i}^{(i)}, \dots, f_{v_n}^{(i)})$  a flow-up class with $f_{v_i}^{(i)} \neq 0$ for $i>1$ and $f_{v_s}^{(i)} =0$ for all $s <i$.  Let  $(G_{zred},\beta_{zred})$ be a reduced graph.  Then, 	$$(0,\dots,0,f_{v_i}^{(i)}, \dots, f_{v_n}^{(i)}) \in \hat{R}_G \Leftrightarrow (f_{v_i}^{(i)}, \dots, f_{v_n}^{(i)}) \in \hat{R}_{G_{zred}}.$$	
	
\end{Prop}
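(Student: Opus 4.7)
My approach is to unfold both sides of the equivalence into their spline conditions and match them one by one. Since Definition \ref{graphre} removes a single zero-labeled vertex, I would first apply it iteratively (as explicitly permitted in the remark following Definition \ref{graphre}) to each of $v_1,\dots,v_{i-1}$, obtaining a graph $(G_{zred},\beta_{zred})$ on the vertex set $\{v_i,\dots,v_n\}$. In this iterated reduction, a surviving vertex $v_j$ carries the module
\[
M'_{v_j}\;=\;[\,m_{v_j},\,r_{v_jv_{k_1}},\dots,r_{v_jv_{k_t}}\,]\,\mathbb{Z},
\]
where $v_{k_1},\dots,v_{k_t}$ are the neighbors of $v_j$ in $G$ among $\{v_1,\dots,v_{i-1}\}$; the edges of $G_{zred}$ are precisely the edges of $G$ joining two surviving vertices, with their original labels.

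For the forward direction, I would assume $F=(0,\dots,0,f_{v_i}^{(i)},\dots,f_{v_n}^{(i)})\in\hat R_G$ and verify that the truncation $(f_{v_i}^{(i)},\dots,f_{v_n}^{(i)})$ lies in $\hat R_{G_{zred}}$. The edge conditions inside $G_{zred}$ are immediate because the corresponding edges of $G$ are unchanged. For the relabeled vertex condition at $v_j$, I would combine the original condition $f_{v_j}^{(i)}\in m_{v_j}\mathbb{Z}$ with the edge conditions $f_{v_j}^{(i)}\equiv f_{v_{k_s}}^{(i)}=0\pmod{r_{v_jv_{k_s}}}$ coming from each removed neighbor $v_{k_s}$; simultaneous divisibility by several integers is equivalent to divisibility by their least common multiple, so $f_{v_j}^{(i)}\in M'_{v_j}$ as required.

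For the reverse direction, I would start from $(f_{v_i}^{(i)},\dots,f_{v_n}^{(i)})\in\hat R_{G_{zred}}$, prepend $i-1$ zeros, and verify every spline condition in $G$. Conditions involving only removed vertices (edges between them, or the vertex memberships $0\in m_{v_s}\mathbb{Z}$ for $s<i$) are trivial. For an edge $v_{k_s}v_j$ with $k_s<i\le j$, the congruence $f_{v_j}^{(i)}\equiv 0\pmod{r_{v_{k_s}v_j}}$ and the vertex membership $f_{v_j}^{(i)}\in m_{v_j}\mathbb{Z}$ both follow from the two inclusions $M'_{v_j}\subseteq r_{v_{k_s}v_j}\mathbb{Z}$ and $M'_{v_j}\subseteq m_{v_j}\mathbb{Z}$, which are built into the definition of $M'_{v_j}$. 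Edge conditions wholly inside the surviving subgraph are unchanged.

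I do not expect a conceptual obstacle: the whole argument is driven by the identity $m\mathbb{Z}\cap r\mathbb{Z}=[m,r]\mathbb{Z}$, and the only care needed is careful bookkeeping through the iterated reduction, making sure that every spline condition of $G$ that involves a removed zero-labeled vertex is absorbed into the relabeled vertex modules of $G_{zred}$, and conversely that every relabeled vertex condition decomposes back into the corresponding conditions on $G$.
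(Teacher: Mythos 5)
Your proposal is correct and follows essentially the same route as the paper's proof: both unfold the spline conditions and observe that, at a surviving vertex adjacent to removed zero-labeled vertices, the membership $f_{v_j}\in m_{v_j}\mathbb{Z}$ together with the congruences modulo the labels of the deleted edges is equivalent to membership in the lcm-relabeled module of $(G_{zred},\beta_{zred})$, while the remaining edge conditions are untouched. Your write-up merely spells out the two directions and the iterated single-vertex reduction more explicitly than the paper's one-chain-of-equivalences argument.
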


\begin{proof}
	By definition we know that $f_{v_j} \in m_{v_j} \mathbb{Z}$ for all $j$. Assume that $v_l$ is adjacent to $v_{l_1},\ldots, v_{l_k}$ for some $l \ge i$ where $l_1,\ldots, l_k <i$. Then, we have
	\begin{center}
		$ (0,\dots,0,f_{v_i}^{(i)}, \dots, f_{v_n}^{(i)}) \in \hat{R}_G \Leftrightarrow f_{v_l}^{(i)} \equiv 0 \mod m_{v_l}\text { and  }f_{v_l}^{(i)} \equiv 0 \mod r_{v_lv_{l_t}} 
		\text{ for } t=1,\ldots, k$ 
		$\Leftrightarrow f_{v_l}^{(i)} \in [m_{v_l},r_{v_lv_{l_1}},\ldots,r_{v_lv_{l_k}}] \mathbb{Z} \Leftrightarrow (f_{v_i}^{(i)}, \dots, f_{v_n}^{(i)}) \in \hat{R}_{G_{zred}}.$
	\end{center}

\end{proof}

\begin{Th}\label{flow-uplcm}
	Let  $(G,\beta)$  be an edge-labeled graph and  $F^{(i)}=(0,\dots,0,f_{v_i}^{(i)}, \dots, f_{v_n}^{(i)})$  a flow-up class with $f_{v_i}^{(i)} \neq 0$ for $i>1$ and $f_{v_s}^{(i)} =0$ for all $s <i$. Let $(G_{zred},\beta_{zred})$ be a reduced graph.  Let  $\mathcal{P}^{i}$ be the set of the longest paths of $v_i$ on $(G_{zred},\beta_{zred})$ and    $F_k=(f_{kv_i},f_{kv_{j_1}},\dots,f_{kv_{j_t}})$   a minimal spline with $f_{kv_i} \neq 0$ on $(P_{ji},\beta_{zred_{ji}})$ where $P_{ji} \in \mathcal{P}^{i}$ and $\beta_{zred_{ij}}=\beta_{zred}{\mid{P_{ji}}}$.
	Then $f_{v_i}^{(i)}$ is a multiple of  the least common multiple of all $f_{kv_i}$.	
\end{Th}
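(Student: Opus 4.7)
The strategy is to reduce the problem on an arbitrary graph to the path case established in Section \ref{Paths}, by restricting the flow-up class to each longest path through $v_i$ and invoking the divisibility results available on paths.

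First, I would apply Proposition \ref{remove} to identify the flow-up class $F^{(i)}=(0,\dots,0,f_{v_i}^{(i)},\dots,f_{v_n}^{(i)})\in\hat{R}_G$ with the spline $(f_{v_i}^{(i)},\dots,f_{v_n}^{(i)})\in\hat{R}_{G_{zred}}$ on the reduced graph. This moves the entire analysis to $G_{zred}$, where $v_i$ has no preceding zero-labeled vertex and is treated as an initial vertex; the vertex labels of the neighbours of the removed zero-vertices are already updated to the appropriate least common multiples via Definition \ref{graphre}.

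Next, for each longest path $P_{ji}\in\mathcal{P}^i$, I would observe that the restriction of $(f_{v_i}^{(i)},\dots,f_{v_n}^{(i)})$ to the vertex set of $P_{ji}$ is a spline on the edge-labeled path $(P_{ji},\beta_{zred_{ji}})$, because the edge-compatibility conditions along $P_{ji}$ form a subset of the spline conditions on $G_{zred}$, and the vertex-module conditions persist under restriction. Since $v_i$ is the initial vertex of $P_{ji}$, the value of this restriction at $v_i$ is exactly $f_{v_i}^{(i)}$, which is non-zero.

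Now I would apply Corollary \ref{$g_i with n$} to $(P_{ji},\beta_{zred_{ji}})$, which tells us that the first entry of any spline on this path is divisible by the nested expression
\[
[m_i,(m_{j_1},r_{ij_1}),(m_{j_2},r_{ij_1},r_{j_1j_2}),\dots,(m_j,r_{ij_1},\dots,r_{j_{t-1}j})].
\]
By Theorem \ref{pexists}, this expression is precisely the leading value $f_{kv_i}$ of the minimal spline $F_k$ on $(P_{ji},\beta_{zred_{ji}})$. Hence $f_{kv_i}\mid f_{v_i}^{(i)}$ for every longest path $P_{ji}\in\mathcal{P}^i$, and taking the least common multiple over all such $k$ yields that $f_{v_i}^{(i)}$ is a multiple of $[\{f_{kv_i}\}]$, which is the claim. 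The main obstacle is the notational bookkeeping around the zero-vertex reduction, in particular verifying that restricting the spline to a sub-path gives a spline with respect to $\beta_{zred_{ji}}$; this is a direct consequence of Proposition \ref{remove} applied sub-path by sub-path, so it is routine rather than substantive.
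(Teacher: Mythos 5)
Your proposal is correct and follows essentially the same route as the paper, which proves the theorem by combining Proposition \ref{remove} with Theorem \ref{longmult} (itself a direct consequence of Corollary \ref{$g_i with n$} applied to each longest path). Your unpacking of the restriction-to-paths step and the identification of $f_{kv_i}$ with the nested expression via Theorem \ref{pexists} simply makes explicit what the paper's one-line proof leaves implicit.
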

\begin{proof}
	It follows from Theorem \ref{longmult} and Proposition \ref{remove}.
\end{proof}

\noindent

\begin{figure}[H]
	
	\begin{center}
		\begin{tikzpicture}
			
			\node[main node] (1) {$m_1 \mathbb{Z}$};
			\node[main node] (2) [right = 2cm of 1]  {$m_2 \mathbb{Z}$};
			\node[main node] (3) [right = 2cm of 2]  {$m_3 \mathbb{Z}$};
			\node[main node] (4) [above left = 3cm of 3]  {$m_4 \mathbb{Z}$};
			\node[main node] (5) [right = 2cm of 3]  {$m_5 \mathbb{Z}$};
			\node[main node] (6) [right = 2cm of 4]  {$m_6 \mathbb{Z}$};

			\path[draw,thick]
			(1) edge node [above]{$r_1 \mathbb{Z}$} (2)
			(2) edge node [above]{$r_2 \mathbb{Z}$} (3)
			(3) edge node [right]{$   r_3 \mathbb{Z} $} (4)
			(2) edge node [left]{$ r_4  \mathbb{Z}  $} (4)
			(3) edge node [above]{$ r_5 \mathbb{Z}$} (5)
			(4) edge node [above]{$  r_6 \mathbb{Z}$} (6);

		\end{tikzpicture}
	\end{center}
	
	\caption{An edge-labeled graph $(G,\beta)$} \label{longest}
\end{figure}
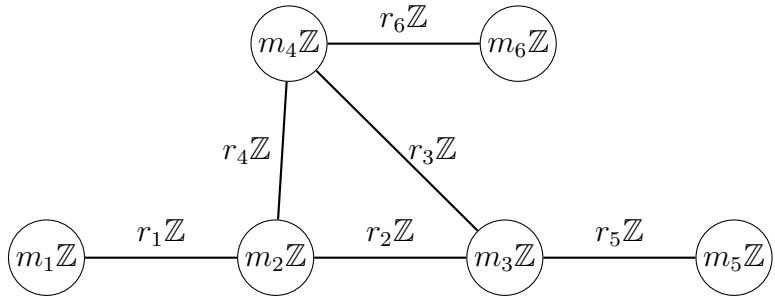	

\begin{figure}[h!]
	\centering
	
	\begin{subfigure}[b]{0.45\textwidth}
		\centering
		\begin{tikzpicture}
			% First part of your graph
			\node[main node] (1)[right=2 cm] {$[m_2,r_1] \mathbb{Z}$};
			\node[main node] (2) [right = 2cm of 1]  {$m_3 \mathbb{Z}$};
			\node[main node] (3) [above left = 2cm of 2]  {$m_4 \mathbb{Z}$};
			\node[main node] (4) [right = 2cm of 2]  {$m_5 \mathbb{Z}$};
			\node[main node] (5) [right = 2cm of 3]  {$m_6 \mathbb{Z}$};
			\path[draw,thick]
			(1) edge node [below]{$r_2 \mathbb{Z} $} (2)
			(2) edge node [right]{$ r_3 \mathbb{Z}$} (3)
			(1) edge node [left]{$ r_4 \mathbb{Z} $} (3)
			(2) edge node [below]{$  r_5 \mathbb{Z}$} (4)
			(3) edge node [above]{$ r_6\mathbb{Z} $} (5);
		\end{tikzpicture}
		\caption{ $(G_2,\beta_2)$}
	\end{subfigure}
	\hfill
	\begin{subfigure}[b]{0.45\textwidth}
		\centering
		\begin{tikzpicture}
			% Second part of your graph
			\node[main node] (6)  {$[m_3,r_2] \mathbb{Z}$};
			\node[main node] (7) [above = 1cm of 6]  {$[m_4,r_4] \mathbb{Z}$};
			\node[main node] (8) [right = 2cm of 6]  {$m_5 \mathbb{Z}$};
			\node[main node] (9) [right = 2cm of 7]  {$m_6 \mathbb{Z}$};
			\path[draw,thick]
			(6) edge node [left]{$  r_3 \mathbb{Z}   $} (7)
			(6) edge node [below]{$  r_5 \mathbb{Z}$} (8)
			(7) edge node [above]{$  r_6 \mathbb{Z}$} (9);
		\end{tikzpicture}
		\caption{$(G_3,\beta_3)$}
	\end{subfigure}
	
	\vspace{1cm} % spacing between rows of subfigures
	
	\begin{subfigure}[t]{0.3\textwidth}
		\centering
		\begin{tikzpicture}[baseline={(current bounding box.north)}]
			\node[main node] (10)  {$[m_4,r_3,r_4] \mathbb{Z}$};
			\node[main node] (11) [below right = 1cm of 10] {$[m_5,r_5] \mathbb{Z}$};
			\node[main node] (12) [right = 2cm of 10]  {$m_6 \mathbb{Z}$};
			\path[draw,thick] (10) edge node [above]{$ r_6 \mathbb{Z}$} (12);
		\end{tikzpicture}
		\caption{$(G_4,\beta_4)$}
	\end{subfigure}
	\hfill
	\begin{subfigure}[t]{0.3\textwidth}
		\centering
		\begin{tikzpicture}[baseline={(current bounding box.north)}]
			\node[main node] (10)  {$[m_6,r_6] \mathbb{Z}$};
			\node[main node] (11) [below  = 1cm of 10] {$[m_5,r_5] \mathbb{Z}$};
		\end{tikzpicture}
		\caption{$(G_5,\beta_5)$}
	\end{subfigure}
	\hfill
	\begin{subfigure}[t]{0.3\textwidth}
		\centering
		\begin{tikzpicture}[baseline={(current bounding box.north)}]
			\node[main node] (12)  {$[m_6,r_6] \mathbb{Z}$};
		\end{tikzpicture}
		\caption{$(G_6,\beta_6)$}
	\end{subfigure}
	\caption{Edge-labeled graphs $(G_i,\beta_i)$}
	\label{subg}
\end{figure}
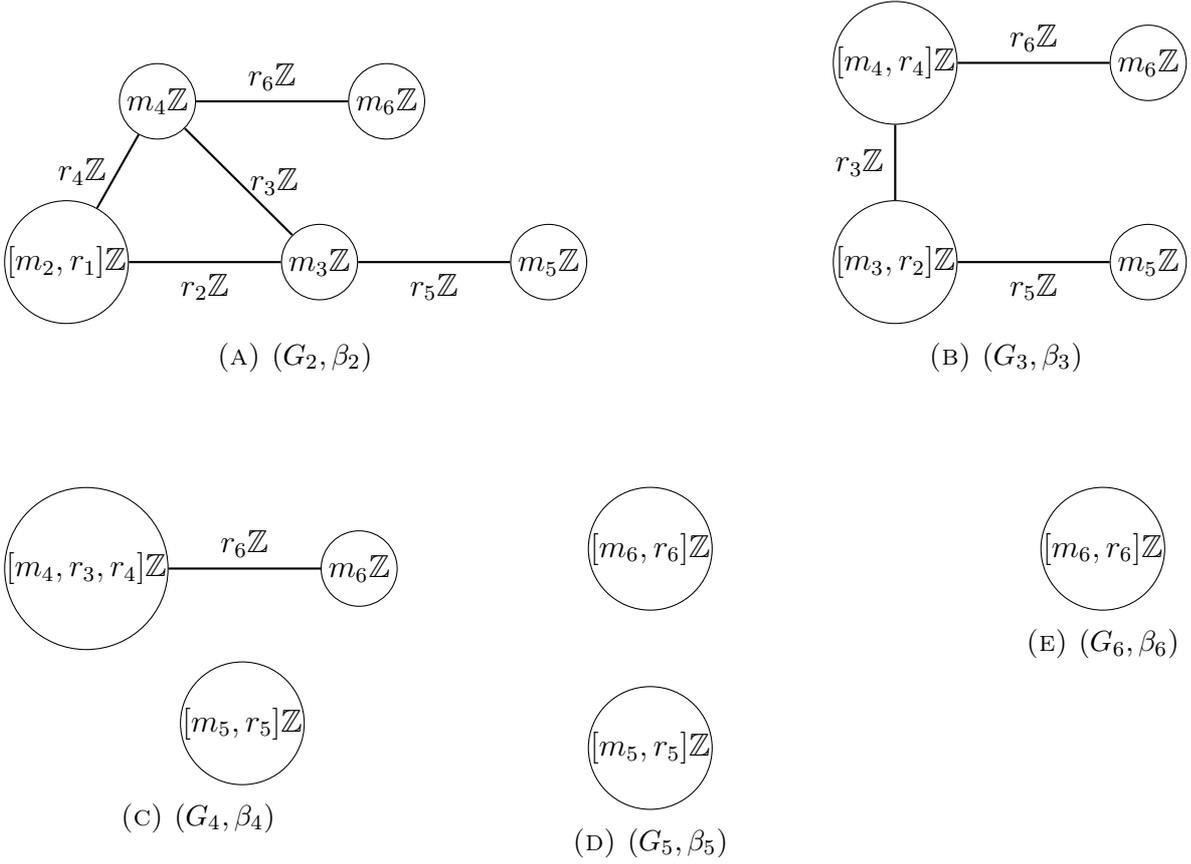

\begin{Ex}\label{exlong}
	
Consider the edge-labeled graph $(G,\beta)$ as shown in Figure \ref{longest}. Let $F^{(i)}$ be a flow-up class on $(G,\beta)$, where $f_{v_i}^{(i)} \neq 0$ for each $i$ with $1 \le i \le 6$, and $f_{v_s}^{(i)} = 0$ for all $s < i$. We can determine a necessary condition  $f_{v_i}^{(i)}$ using longest paths. For $i=1$, set $G_1=G$. The set of longest paths on $G_1$ is $\mathcal{P}^{1} = \{P^1_{61}, P^2_{61}, P^1_{51}, P^2_{51}\}$ where 
$P^1_{61}= (m_6r_6m_4r_4m_2r_1m_1)$,
$P^2_{61}= (m_6r_6m_4r_3m_3r_2m_2r_1m_1)$,
$P^1_{51}= (m_5r_5m_3r_2m_2r_1m_1)$,
$P^2_{51}= (m_5r_5m_3r_3m_4r_4m_2r_1m_1).$

	Let $a_1,a_2,a_3,a_4$  be the first nonzero entries of a minimal flow-up class on each longest path $(P^1_{61},\beta^{1}_{61}),(P^2_{61},\beta^{2}_{61}),(P^1_{51},\beta^{1}_{51}),(P^2_{51},\beta^{2}_{51}),$ respectively.
	By Theorem \ref{longmult}, 
	$f_{v_1}^{(1)}$ is a multiple of $[a_1,a_2,a_3,a_4]$ where 
	\begin{align*}
		a_1 = &[m_1,(m_2,r_1),(m_4,r_1,r_4),(m_6,r_1,r_4,r_6)] \\
		a_2= &[m_1,(m_2,r_1),(m_3,r_1,r_2),(m_4,r_1,r_2,r_3),(m_6,r_1,r_2,r_3,r_6)]\\
		a_3= &[m_1,(m_2,r_1),(m_3,r_1,r_2),(m_5,r_1,r_2,r_5)]\\
		a_4= &[m_1,(m_2,r_1),(m_4,r_1,r_4),(m_3,r_1,r_3,r_4),(m_5,r_1,r_3,r_4,r_5)].
	\end{align*}
	
	For any $2\le i \le 6$, we want to find a similar condition for  $f_{v_i}^{(i)}$. Since $f_{v_s}^{(i)} = 0$ for all $s < i$, we first remove the vertices with zero labels, resulting in new graphs $(G_i, \beta_i)$ for each $i$ with $2 \le i \le 6$.
	For each $2\le i \le 4$, we have the following sets of longest paths on $(G_i, \beta_i)$: 
	For $i=2$, $ \mathcal{P}^{2}= \{P^1_{62},P^2_{62},P^1_{52},P^2_{52}\}$  on $G_2$ where  $P^1_{62}= (m_6r_6m_4r_4m^{\prime}_2)$, $P^2_{62}= (m_6r_6m_4r_3m_3r_2m^{\prime}_2)$, $P^1_{52}=  (m_5r_5m_3r_2m^{\prime}_2) $ and $P^2_{52} =  (m_5r_5m_3r_3m_4r_4m^{\prime}_2)$  together with $m^{\prime}_2=[m_2,r_1]$. For $i=3$, $ \mathcal{P}^{3}= \{P^1_{63},P^1_{53}\}$ on $G_3$ where	$P^1_{63}= (m_6r_6m^{\prime}_4r_3m^{\prime}_3) $,
	$P^1_{53}=  (m_5r_5m^{\prime}_3)$ together with $m^{\prime}_3=[m_3,r_2]$ and $m^{\prime}_4=[m_4,r_4]$. For $i=4$, 	there is only one path $P^1_{64}=  (m_6r_6m^{\prime \prime}_4)$ on $G_4$ together with $m^{\prime \prime}_4=[m_4,r_3,r_4]$. Therefore
by Theorem \ref{longmult}, we obtain $f_{v_2}^{(2)}, f_{v_3}^{(3)}, f_{v_4}^{(4)}$ are divisible by
$[m_2,r_1,(m_4,r_4),(m_6,r_4,r_6),(m_4,r_2,r_3),(m_6,r_2,r_3,r_6),(m_3,r_2),(m_5,r_2,r_5),(m_3,r_3,r_4),(m_5,r_3,r_4,r_5)],$
$[m_3,r_2,(m_5,r_5),(m_4,r_3),(m_6,r_3,r_6),(r_3,r_4)]$ and
		$[m_4,r_3,r_4,(m_6,r_6)]$ respectively.
	\normalsize
	Moreover, for $i=5, 6$,  $(G_5,\beta_5)$ is a union of null graphs and  $(G_6,\beta_6)$ is a null graph. Hence
	
	$$	f_{v_5}^{(5)}= [m_5,r_5]  \text{  and  }
		f_{v_6}^{(6)}= [m_6,r_6].$$

\end{Ex}

\begin{Lemma}\label{divisible k_1}
	Let $(G,\beta)$ be an edge-labeled graph. Let $P_{ji}$ be a trail	
	and $\{(p^{(i,j)})\}$  the set of greatest common divisors of the edge labels on $P_{ji}$. Let \[k_i=[m_i,\{(m_j,[ \{(p^{(i,j)})\}]) \mid \forall j\neq i \}].\]
	Then  $k_i$ is a multiple of  $([\{(p^{(i,t)})\}],k_{t})$ for all $i<t$.	
\end{Lemma}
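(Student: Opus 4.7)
The plan is to argue prime by prime using $p$-adic valuations, since divisibility in $\mathbb{Z}$ is tested coordinatewise, with gcd corresponding to $\min$ and lcm to $\max$ of valuations. Fix a prime $p$ and write $u_{ij} = v_p([\{(p^{(i,j)})\}])$ for $i \neq j$. The crucial observation is that $u_{ij} \ge e$ if and only if there exists a trail from $v_i$ to $v_j$ every one of whose edge labels has $p$-valuation at least $e$, equivalently, $v_i$ and $v_j$ lie in the same connected component of the subgraph $G_p^{\ge e} \subseteq G$ obtained by deleting all edges whose label has $p$-valuation strictly less than $e$. This subgraph viewpoint is the engine of the proof, because connectedness is transitive.

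Using this observation I would first establish the following sublemma: for pairwise distinct $i, t, j$,
\[
\bigl([\{(p^{(i,t)})\}],\ [\{(p^{(t,j)})\}]\bigr) \mid [\{(p^{(i,j)})\}].
\]
Prime by prime this is the inequality $u_{ij} \ge \min(u_{it}, u_{tj})$, which is immediate from transitivity of connectedness in $G_p^{\ge e}$: if $v_i$ and $v_t$ lie in the same component of $G_p^{\ge e}$ and $v_t$ and $v_j$ likewise, then so do $v_i$ and $v_j$, giving a trail from $v_i$ to $v_j$ whose every edge has $p$-valuation at least $e$.

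With the sublemma in hand, the main claim $(a, k_t) \mid k_i$, with $a = [\{(p^{(i,t)})\}]$, follows from distributivity of gcd over lcm in $\mathbb{Z}$. Expanding,
\[
(a, k_t) \;=\; \bigl[(a, m_t),\ \{(a, m_j, [\{(p^{(t,j)})\}]) : j \neq t\}\bigr],
\]
I would verify that each listed generator divides $k_i$. The term $(a, m_t)$ is precisely the $j = t$ generator $(m_t, [\{(p^{(i,t)})\}])$ of $k_i$. For $j = i$ the term $(a, m_i, [\{(p^{(t,i)})\}])$ divides $m_i$ and hence $k_i$. For $j \neq i, t$ the sublemma gives $(a, [\{(p^{(t,j)})\}]) \mid [\{(p^{(i,j)})\}]$, so
\[
(a, m_j, [\{(p^{(t,j)})\}]) \mid (m_j, [\{(p^{(i,j)})\}]),
\]
which is the $j$th generator of $k_i$.

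The main obstacle is recognizing the connectedness reformulation of $[\{(p^{(i,j)})\}]$; once that is noted, everything else is routine bookkeeping with gcds and lcms.
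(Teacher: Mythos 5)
Your proposal is correct, and its overall skeleton matches the paper's: both distribute the gcd with $[\{(p^{(i,t)})\}]$ across the lcm defining $k_t$ and then check that each resulting generator divides $k_i$. The real difference is in how the delicate generators are handled. The paper's proof rewrites $k_t$ with terms of the form $(m_l,[\{(p^{(i,l)})\}])$ — i.e.\ with gcds taken along trails ending at $v_i$ rather than at $v_t$ — which quietly builds in exactly the divisibility that needs justification, and then disposes of the remaining term by noting that a gcd of generators of $k_i$ divides $k_i$. You instead keep $k_t$ in its correct form, isolate the needed ``triangle'' relation
\[
\bigl([\{(p^{(i,t)})\}],[\{(p^{(t,j)})\}]\bigr)\ \big|\ [\{(p^{(i,j)})\}],
\]
and prove it prime by prime via the observation that $v_p\bigl([\{(p^{(i,j)})\}]\bigr)\ge e$ exactly when $v_i$ and $v_j$ lie in one component of the subgraph of edges with $p$-valuation at least $e$; transitivity of connectedness then gives the inequality, and passing to a simple path also dodges the trail-versus-walk issue in concatenation. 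What each approach buys: the paper's argument is shorter but leans on an unproved (and as written, mis-indexed) identification of the generators of $k_t$; your version is longer but supplies the missing lemma explicitly and exposes the ultrametric/connectivity structure behind the gcd--lcm bookkeeping, after which the term-by-term matching with the generators of $k_i$ (the $j=t$ term, the $j=i$ term via $m_i\mid k_i$, and the remaining $j$ via the sublemma) is routine and complete.
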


\begin{proof}
 For a fixed $t$, consider any $i<t$. Then we can rewrite 
		\begin{align*}
		k_t &= [m_t,(m_i,[ \{(p^{(i,t)})\}]),\{(m_l,[ \{(p^{(i,l)})\}]) \mid \forall l\neq i,t \}].
	\end{align*} If we substitute $k_t$	
	into  $	([\{(p^{(i,t)})\}],k_{t})$, we obtain 
	\begin{align}
		([\{(p^{(i,t)})\}],k_{t})
		&= ([\{(p^{(i,t)})\}],[m_t,(m_i,[ \{(p^{(i,t)})\}]),\{(m_l,[ \{(p^{(i,l)})\}]) \mid \forall l\neq i,t \}])
		\label{k_i 2}.
	\end{align}
	Rearranging Equation  \ref{k_i 2}, we get
	\begin{align*}
		\bigg[\bigg(m_t,[\{(p^{(i,t)})\}]\bigg),\bigg(m_i,[ \{(p^{(i,t)})\}],[\{(p^{(i,t)})\}]\bigg),\bigg(\{(m_l,[ \{(p^{(i,l)})\}]) \mid \forall l\neq i,t \}, [\{(p^{(i,t)})\}]\bigg) \bigg].
	\end{align*}
	It is straightforward to see that $k_i$ is a multiple of both $(m_t,[\{(p^{(i,t)})\}])$ and $(m_i,[ \{(p^{(i,t)})\}])$. We now aim to demonstrate that $k_i$ is also divisible by $$\bigg(\{(m_l,[ \{(p^{(i,l)})\}]) \mid \forall l\neq i,t \}, [\{(p^{(i,t)})\}]\bigg).$$
	Since each $(m_l,[ \{(p^{(i,l)})\}])$ with $l\neq i $ is itself a multiple of this expression, it follows that $k_i$ is divisible by $\bigg(\{(m_l,[ \{(p^{(i,l)})\}]) \mid \forall l\neq i,t \}, [\{(p^{(i,t)})\}]\bigg)$.
\end{proof}

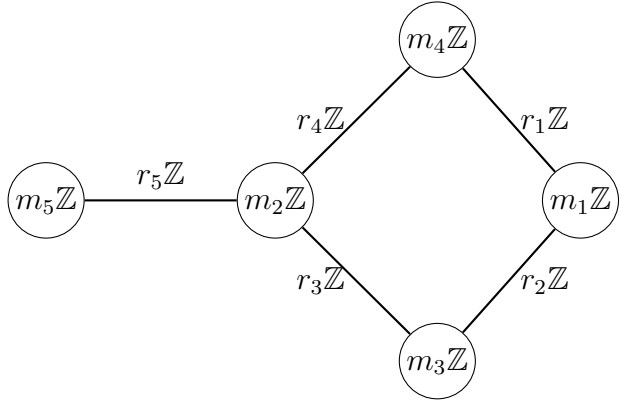
\begin{figure}[h]
	
	\begin{center}
		\begin{tikzpicture}
			
			\node[main node] (1) {$m_5 \mathbb{Z}$};
			\node[main node] (2) [right = 2cm of 1]  {$m_2 \mathbb{Z}$};
			\node[main node] (3) [below right = 2cm of 2]  {$m_3 \mathbb{Z}$};
			\node[main node] (4) [above right = 2cm of 2]  {$m_4 \mathbb{Z}$};
			\node[main node] (5) [right = 3cm of 2]  {$m_1 \mathbb{Z}$};

			\path[draw,thick]
			(1) edge node [above]{$ r_5\mathbb{Z} $} (2)
			(2) edge node [left]{$r_3 \mathbb{Z}$} (3)
			(3) edge node [right]{$ r_2 \mathbb{Z}$} (5)
			(2) edge node [left]{$ r_4  \mathbb{Z} $} (4)
			(4) edge node [right]{$ r_1 \mathbb{Z}$} (5);

		\end{tikzpicture}
	\end{center}
	
	\caption{An edge-labeled graph $(G,\beta)$} \label{path}
\end{figure}

\begin{Ex}
	Consider the edge-labeled graph $(G,\beta)$ as shown in Figure \ref{path}.	For each trail  $P_{ji}$, let $\{(p^{(i,j)})\}$ denote the set of greatest common divisors of the edge labels along that trail. Define $k_i=[m_i,\{(m_j,[ \{(p^{(i,j)})\}]) \mid \forall j\neq i \}].$ Then we have:
	\begin{align*}
		k_1=& [m_1,\{(m_j,[ \{(p^{(1,j)})\}]) \mid \forall j\neq 1\}]\\
		=&[m_1,(m_2,[(r_1,r_4),(r_2,r_3)]),(m_3,[r_2,(r_1,r_3,r_4)]),(m_4,[r_1,(r_2,r_3,r_4)]),(m_5,[(r_1,r_4,r_5),(r_2,r_3,r_5)])] \\
	\text{and }	\\
		k_2=&  [m_2,\{(m_j,[ \{(p^{(2,j)})\}]) \mid \forall j\neq 2\}]\\
		=& [m_2,(m_1,[(r_1,r_4),(r_2,r_3)]), (m_3,[r_3,(r_1,r_2,r_4)]),(m_4,[r_4,(r_1,r_2,r_3)]),(m_5,r_5)].
	\end{align*} 
Substituting $k_2$ into $([(p_1^{(2,1)}),(p_2^{(2,1)})],k_2)$,  where $(p_1^{(2,1)})=(r_1,r_4),(p_2^{(2,1)})=(r_2,r_3)$, yields:
\begin{align}
	\begin{split}
		([(p_1^{(2,1)}),(p_2^{(2,1)})],k_2)&= ([(r_1,r_4),(r_2,r_3)],[m_2,\{(m_j,[ \{(p^{(2,j)})\}]) \mid \forall j\neq 2\}])\\
		& = [(m_2,r_1,r_4),(m_1,r_1,r_4),(m_1,r_1,r_2,r_3,r_4),(m_3,r_1,r_3,r_4),(m_3,r_1,r_2,r_4),\\
		&(m_4,r_1,r_4),(m_4,r_1,r_2,r_3,r_4), (m_5,r_1,r_4,r_5),(m_2,r_2,r_3),(m_1,r_1,r_2,r_3,r_4),(m_1,r_2,r_3),\\
		&(m_3,r_2,r_3),(m_3,r_1,r_2,r_3,r_4),
		(m_4,r_2,r_3,r_4),(m_4,r_1,r_2,r_3),(m_5,r_2,r_3,r_5)].	
	\end{split}
	\label{list}
\end{align}	
Now, given that 	\small\begin{align*}
	k_1=&[m_1,\{(m_j,[ \{(p^{(1,j)})\}]) \mid \forall j\neq 1\}]\\
=&	[m_1,(m_2,r_1,r_4),(m_2,r_2,r_3),(m_3,r_2),(m_3,r_1,r_3,r_4),(m_4,r_1),(m_4,r_2,r_3,r_4),(m_5,r_1,r_4,r_5),(m_5,r_2,r_3,r_5)],
\end{align*}\normalsize we can clearly see that since $m_1$ is divisible by	$(m_1,r_1,r_4),(m_1,r_1,r_2,r_3,r_4)$ and $(m_1,r_2,r_3)$;  $(m_3,r_2)$ is divisible by $(m_3,r_1,r_2,r_4),(m_3,r_2,r_3),(m_3,r_1,r_2,r_3,r_4)$; $(m_4,r_1)$ is divisible by  $(m_4,r_1,r_4),$ $(m_4,r_1,r_2,r_3,r_4)$ and $(m_4,r_1,r_2,r_3)$ in the list referred to as \ref{list}, $k_1$ is divisible by
	$([(p_1^{(2,1)}),(p_2^{(2,1)})],k_2).$
\end{Ex}

The following theorem establishes the existence of a flow-up class $F^{(1)}=(f_{v_1}^{(1)},f_{v_2}^{(1)},\dots,f_{v_n}^{(1)})$ on an arbitrary graph.

\begin{Th}\label{exist}
	Let $(G,\beta)$ be an edge-labeled graph and   $\mathcal{P}^{1}$  the set of the longest paths $P_{j1}=(m_jr_{i_{t+1}}m_{i_t}r_{i_{t}}\dots r_{i_2}m_{i_1}r_{i_1}m_1)$ of $v_1$. Let    $F_k=(f_{kv_1},f_{kv_{i_1}},\dots,f_{kv_{i_t}},f_{kv_j})$  be a minimal spline with $f_{kv_1} \neq 0$ on $(P_{j1},\beta_{j1})$ where $P_{j1} \in \mathcal{P}^{1}$ and $\beta_{j1}=\beta{\mid{P_{j1}}}$.
	Then there exists a flow-up class $F^{(1)}=(f_{v_1}^{(1)},f_{v_2}^{(1)},\dots,f_{v_n}^{(1)})$ with the first nonzero entry $f_{v_1}^{(1)}$ equal to  the least common multiple of all $f_{kv_1}$.
\end{Th}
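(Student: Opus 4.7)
The plan is to construct $F^{(1)}$ explicitly, one vertex at a time. First, set $f_{v_1}^{(1)} := \mathrm{lcm}_k f_{kv_1}$ as the statement demands. Since this integer is a multiple of every $f_{kv_1}$, the scaled vector $G_k := (f_{v_1}^{(1)}/f_{kv_1})F_k$ is still a spline on $(P_{j1},\beta_{j1})$, now with first entry exactly $f_{v_1}^{(1)}$. By the maximality defining $\mathcal{P}^1$, every vertex of the connected component of $v_1$ lies on at least one longest path, so each such vertex receives at least one candidate value through one of the $G_k$; in any component of $G$ not containing $v_1$ I would simply set $f_{v_l}^{(1)} := 0$, which trivially satisfies every local spline condition there.

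I would then extend these candidate values to a globally consistent spline by induction on graph-distance from $v_1$. At each newly processed vertex $v_l$, define $f_{v_l}^{(1)}$ as any solution of the system
\[
f_{v_l}^{(1)}\equiv 0 \pmod{m_l},\qquad f_{v_l}^{(1)}\equiv f_{v_{l'}}^{(1)} \pmod{r_{ll'}}\quad\text{for each already-defined neighbor } v_{l'},
\]
invoking the Chinese Remainder Theorem (Theorem \ref{CRT}). In any tree-like extension, compatibility is automatic from the spline conditions already secured along the corresponding longest path through $v_l$; the delicate case is when the new edge closes a cycle.

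The main obstacle is precisely this cycle-closing step: one must show that the two branches of any cycle through $v_l$ yield values that agree modulo the required gcd of edge labels. This is exactly the content of Lemma \ref{divisible k_1}, which asserts that the natural divisor $k_i$ at $v_i$ is a multiple of $([\{(p^{(i,t)})\}],k_t)$ for $i<t$. Combined with Theorem \ref{longmult} and the fact that $f_{v_1}^{(1)}$ was chosen as the lcm over \emph{all} longest paths, any residue produced along one longest path through $v_l$ already lies inside the lcm-based divisibility class of every other longest path through $v_l$, so the pairwise CRT compatibility conditions hold. Once compatibility is verified the CRT solution exists at every stage; the induction terminates, and the resulting vector $(f_{v_1}^{(1)},\dots,f_{v_n}^{(1)})$ is by construction a spline whose first nonzero entry is the prescribed $f_{v_1}^{(1)}$.
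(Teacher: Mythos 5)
There is a genuine gap in your inductive step. At each new vertex $v_l$ you only impose $f_{v_l}^{(1)}\equiv 0 \pmod{m_l}$ together with congruences modulo the edge labels to already-defined neighbors, and you take \emph{any} solution. This invariant is too weak: an admissible choice at an early vertex can make a later system unsolvable, and this already happens on trees, so your claim that ``in any tree-like extension, compatibility is automatic'' is false. Concretely, take the path of the paper's Figure \ref{trail}: $m_1=3$, $r_1=5$, $m_2=5$, $r_2=7$, $m_3=7$. Your first entry is $f_{v_1}^{(1)}=[3,(5,5),(7,5,7)]=15$; your system at $v_2$ reads $f_{v_2}^{(1)}\equiv 0 \pmod 5$, $f_{v_2}^{(1)}\equiv 15 \pmod 5$, so $f_{v_2}^{(1)}=5$ is a legal choice, but then no $f_{v_3}^{(1)}$ satisfies $f_{v_3}^{(1)}\equiv 5 \pmod 7$ and $f_{v_3}^{(1)}\in 7\mathbb{Z}$. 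The scaled splines $G_k$ you introduce do not repair this, since they are never used in the actual CRT step, and their values at a common vertex need not agree nor satisfy the off-path edge conditions. Likewise your appeal to Lemma \ref{divisible k_1} does not attach to your construction: that lemma concerns the quantities $k_i$, which never appear in your congruence systems.

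What the paper's proof does differently, and what is actually needed, is to carry a much stronger invariant through the induction. At each new vertex $v_i$ one imposes $f_{v_i}^{(1)}\equiv 0 \pmod{k_i}$, where $k_i$ is the least common multiple, over all longest paths of $v_i$, of $[m_i,(m_{j_1},r_{j_1}),(m_{j_2},r_{j_1},r_{j_2}),\dots]$ (this encodes the downstream vertex-module constraints, by Theorem \ref{longmult}), and simultaneously $f_{v_i}^{(1)}\equiv f_{v_t}^{(1)} \pmod{[\{(p^{(i,t)})\}]}$ for \emph{every} previously defined vertex $v_t$, i.e.\ modulo the lcm of the gcds of edge labels along all trails from $v_i$ to $v_t$, not merely along single edges to neighbors. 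The pairwise CRT compatibility conditions are then verified: those involving the $k_i$-congruence follow from Lemma \ref{divisible k_1}, and those between two previously defined vertices follow from the trail congruences already secured at earlier stages together with the observation that a gcd of labels along a trail $v_i\to v_1$ paired with one along $v_i\to v_2$ divides the gcd along a concatenated trail $v_1\to v_2$. Without replacing your modulus $m_l$ by $k_l$ and your neighbor-edge congruences by these trail congruences, the induction cannot be closed, so the proposal as written does not prove the theorem.
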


\begin{proof}
	Let $f_{v_1}^{(1)}$ be defined as the least common multiple of all terms $f_{kv_1}$.
We assert that corresponding entries $f_{v_{i}}^{(1)}$ of $F^{(1)}$ exist for all $i=2,\dots,n$. To demonstrate this, we begin by establishing the existence of $f_{v_{2}}^{(1)}$, followed by  $f_{v_{3}}^{(1)}$. Following this pattern, we continue inductively to prove the existence of  $f_{v_{i}}^{(1)}$ for all $i=4,\dots,n$. 
	Let  $H=(h_1,h_2,\ldots,h_n)$ be a spline on $(G,\beta)$ and  $P_{ji} \in \mathcal{P}^{i}$  a longest path for $i<j$.  For simplicity, we write a longest path   $P_{ji}= (m_jr_{j_{s+1}}m_{j_s}r_{j_{s}}\dots m_{j_1}r_{j_1}m_i) \in \mathcal{P}^{i}$ for $i<j$. Then, by Theorem \ref{longmult},  $h_i$ is divisible by the least common multiple $k_i$ of  $$[m_i,(m_{j_1},r_{j_1}),(m_{j_2},r_{j_1},r_{j_2}),\dots,(m_j,r_{j_1},r_{j_2},\dots,r_{j_{s+1}})]$$ for each longest path $P_{ji}$. Thus, $h_i=k_it_i$ for some $t_i$. 
	Assume that  the set 
	$\mathcal{P}^{2}=\{P_{2i}^{(1)},P_{2i}^{(2)},\dots  \}$ represents the collection of longest paths.
	Let $\{(p_1^{(2,1)}),(p_2^{(2,1)}), \dots,(p_t^{(2,1)})\}$ be the set of greatest common divisors of the edge labels on $P_{21}$ for some $t$. We can then analyze a corresponding system of equations, denoted as
	\begin{align*}
		f_{v_{2}}^{(1)} &\equiv	f_{v_{1}}^{(1)} \mod [\{(p_l^{(2,1)}) \mid 1\le l \le t\}] \\
		f_{v_{2}}^{(1)} &\equiv 0  \quad \mod k_2.
	\end{align*}
	By Theorem \ref{CRT} (CRT),  the system has a solution for $f_{v_{2}}^{(1)}$  if and only if \[f_{v_{1}}^{(1)} \equiv 0 \mod ([\{(p_l^{(2,1)}) \mid 1\le l \le t\}],k_2).\]   By Lemma \ref{divisible k_1}, we know that $k_1$ is a multiple of $([\{(p_l^{(2,1)}) \mid 1\le l \le t\}],k_2)$. This  guarantees the existence of a solution for $f_{v_{2}}^{(1)}$,  which can be written as $f_{v_{2}}^{(1)}=k_2t_2$ for some $t_2$. 
	Hence,  $f_{v_2}^{(1)}$ is determined by the choice of 
	$f_{v_1}^{(1)}$. Now we will show the existence of $f_{v_3}^{(1)}$.  Let $\{(p_1^{(3,1)}),(p_2^{(3,1)}), \dots,(p_{t_1}^{(3,1)})\}$ and $\{(p_1^{(3,2)}),(p_2^{(3,2)}), \dots,(p_{t_2}^{(3,2)})\}$ be the set of greatest common divisors of the edge labels respectively on $P_{31}$ and $P_{32}$. Now, we apply the equivalence condition of the Chinese Remainder Theorem, which states that the following system:
	\begin{align*}
		f_{v_{3}}^{(1)} &\equiv	f_{v_{1}}^{(1)} \mod [\{(p_{l_1}^{(3,1)}) \mid 1\le l_1 \le t_1\}] \\
		f_{v_{3}}^{(1)} &\equiv	f_{v_{2}}^{(1)} \mod [\{(p_{l_2}^{(3,2)}) \mid 1\le l_2 \le t_2\}] \\
		f_{v_{3}}^{(1)} &\equiv 0  \quad \mod k_3
	\end{align*}
	admits a solution if and only if the following conditions hold:
	\begin{align}
		f_{v_{1}}^{(1)} &\equiv	f_{v_{2}}^{(1)} \mod ([\{(p_{l_1}^{(3,1)}) \mid 1\le l_1 \le t_1\}],[\{(p_{l_2}^{(3,2)}) \mid 1\le l_2 \le t_2\}]) \label{1.2.equality}\\
		f_{v_{1}}^{(1)} &\equiv 0   \mod (k_3,[\{(p_{l_1}^{(3,1)}) \mid 1\le l_1 \le t_1\}]) \label{1.satisfy} \\
		f_{v_{2}}^{(1)} &\equiv 0   \mod (k_3,[\{(p_{l_2}^{(3,2)}) \mid 1\le l_2 \le t_2\}]). \label{2.satisfy}
	\end{align}
	By Lemma \ref{divisible k_1}, the fact that $k_1$ is divisible by $(k_3,[\{(p_{l_1}^{(3,1)}) \mid 1\le l_1 \le t_1\}])$ ensures that Equation \ref{1.satisfy} is satisfied. Similarly,  as	$k_2$ is divisible by $(k_3,[\{(p_{l_2}^{(3,2)}) \mid 1\le l_2 \le t_2\}])$, Equation \ref{2.satisfy} also holds. It only remains to verify that Equation \ref{1.2.equality} holds. We can express \begin{equation*}
		[([\{(p_{l_1}^{(3,1)}) \mid 1\le l_1 \le t_1\}],[\{(p_{l_2}^{(3,2)}) \mid 1\le l_2 \le t_2\}])] 
	\end{equation*} as 
	\begin{gather*}
		[(p_1^{(3,1)},p_1^{(3,2)}),\dots,(p_1^{(3,1)},p_{t_2}^{(3,2)}),(p_2^{(3,1)},p_1^{(3,2)}),\dots,(p_2^{(3,1)},p_{t_2}^{(3,2)}),\dots,(p_{t_1}^{(3,1)},p_{t_2}^{(3,2)})].
	\end{gather*} 
	For spline conditions, we have $	f_{v_{2}}^{(1)} \equiv	f_{v_{1}}^{(1)} \mod [\{(p_l^{(2,1)}) \mid 1\le l \le t\}] $. It follows that 
	since each pair	$(p_i^{(3,1)},p_j^{(3,2)})$ divides some $(p_l^{(2,1)})$, for all $1\le i \le t_1$, $1\le j\le t_2$ and  $1\le l \le t$ Equation \ref{1.2.equality} is satisfied.
	
	By proceeding inductively, we can establish the existence of  $f_{v_{i+1}}^{(1)}$ for all $i=3,\dots ,n-1$. Consequently, a flow-up class  $F^{(1)}=(f_{v_1}^{(1)},f_{v_2}^{(1)},\dots,f_{v_n}^{(1)})$ exists.

\end{proof}

\begin{figure}[h]
	
	\begin{center}
		\begin{tikzpicture}

			\node[main node] (1)  {$m_1 \mathbb{Z}$};
			\node[main node] (2) [right = 2cm of 1]  {$m_3 \mathbb{Z}$};
			\node[main node] (3) [above left = 3cm of 2]  {$m_4 \mathbb{Z}$};
			\node[main node] (4) [right = 2cm of 3]  {$m_2 \mathbb{Z}$};

			\path[draw,thick]
			(1) edge node [below]{$r_3 \mathbb{Z}$} (2)
			(2) edge node [right]{$   r_2 \mathbb{Z} $} (3)
			(1) edge node [left]{$ r_1  \mathbb{Z}  $} (3)
			(3) edge node [above]{$ r_4 \mathbb{Z}$} (4);

		\end{tikzpicture}
	\end{center}
	
	\caption{An edge-labeled graph $(G,\beta)$} \label{flow-up exist}
\end{figure}
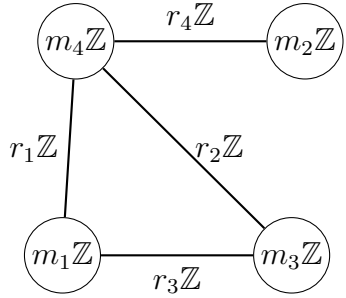

\begin{Ex}
	Consider the edge-labeled graph $(G,\beta)$ as shown in Figure \ref{flow-up exist}.	We let \begin{align*}
		f_{v_1}^{(1)}=k_1= & [m_1,\{(m_j,[ \{(p^{(1,j)})\}]) \mid \forall j\neq 1 \}] \\
		=& [m_1,(m_2,[(p_1^{(1,2)}),(p_2^{(1,2)})]), (m_3,[(p_1^{(1,3)}),(p_2^{(1,3)})]),(m_4,[(p_1^{(1,4)}),(p_2^{(1,4)})])] \\
		=& [m_1,(m_2,[(r_2,r_3,r_4),(r_1,r_4)]),(m_3,[r_3,(r_1,r_2)]),(m_4,[r_1,(r_2,r_3)])].
	\end{align*} 
	We now demonstrate the existence of $f_{v_{2}}^{(1)}$,  and then proceed to construct  $f_{v_{3}}^{(1)}$ and $f_{v_{4}}^{(1)}$.	
	We consider the following system of congruences:
	\begin{align*}
		f_{v_{2}}^{(1)} &\equiv	f_{v_{1}}^{(1)} \mod [(r_2,r_3,r_4),(r_1,r_4)] \\
		f_{v_{2}}^{(1)} &\equiv 0  \quad \mod k_2.
	\end{align*}
	According to Theorem \ref{CRT} (CRT),   this system admits a solution for $f_{v_{2}}^{(1)}$  if and only if \[f_{v_{1}}^{(1)} \equiv 0 \mod ([(r_2,r_3,r_4),(r_1,r_4)],k_2).\]   By Lemma \ref{divisible k_1}, we know that $k_1$ is divisible by  $([(r_2,r_3,r_4),(r_1,r_4)],k_2)$, which ensures the existence of a solution for  $f_{v_{2}}^{(1)}$.
	Next, we establish the existence of  $f_{v_3}^{(1)}$. 
	Applying the Chinese Remainder Theorem, we consider the following system:
	\begin{align*}
		f_{v_{3}}^{(1)} &\equiv	f_{v_{1}}^{(1)} \mod [r_3,(r_1,r_2)] \\
		f_{v_{3}}^{(1)} &\equiv	f_{v_{2}}^{(1)} \mod [(r_2,r_4),(r_1,r_3,r_4)] \\
		f_{v_{3}}^{(1)} &\equiv 0  \quad \mod k_3
	\end{align*}
	admits a solution if and only if the following compatibility conditions are satisfied:
	\begin{align*}
		f_{v_{1}}^{(1)} &\equiv	f_{v_{2}}^{(1)} \mod ([r_3,(r_1,r_2)],[(r_2,r_4),(r_1,r_3,r_4)]) \\
		f_{v_{1}}^{(1)} &\equiv 0   \mod (k_3,[r_3,(r_1,r_2)])  \\
		f_{v_{2}}^{(1)} &\equiv 0   \mod (k_3,[(r_2,r_4),(r_1,r_3,r_4)]) 
	\end{align*}
	where
	$	k_3=[m_3,(m_1,[r_3,(r_1,r_2)]),(m_2,[(r_2,r_4),(r_1,r_3,r_4)]),(m_4,[r_2,(r_1,r_3)])].$
	By Lemma \ref{divisible k_1}, we know that $k_1$ is divisible by $(k_3,[r_3,(r_1,r_2)])$ and $k_2$ is divisible by $(k_3,[(r_2,r_4),(r_1,r_3,r_4)])$. Moreover, it is easy to verify that $[(r_2,r_3,r_4),(r_1,r_4)]$ is divisible by $([r_3,(r_1,r_2)],[(r_2,r_4),(r_1,r_3,r_4)]).$ Thus,  a solution for  $f_{v_{3}}^{(1)}$ exists. It remains to show the existence of $f_{v_{4}}^{(1)}$.
	We consider the following system of congruences:
	\begin{align*}
		f_{v_{4}}^{(1)} &\equiv	f_{v_{1}}^{(1)} \mod [r_1,(r_2,r_3)] \\
		f_{v_{4}}^{(1)} &\equiv	f_{v_{2}}^{(1)} \mod r_4 \\
		f_{v_{4}}^{(1)} &\equiv	f_{v_{3}}^{(1)} \mod [r_2,(r_1,r_3)] \\
		f_{v_{4}}^{(1)} &\equiv 0  \quad \mod k_4.
	\end{align*}
	According to Theorem \ref{CRT} (CRT),   this system admits a solution for $f_{v_{4}}^{(1)}$  if and only if
	\begin{align*}
		f_{v_{1}}^{(1)} &\equiv	f_{v_{2}}^{(1)} \mod (r_4,[r_1,(r_2,r_3)] ) \\
		f_{v_{1}}^{(1)} &\equiv	f_{v_{3}}^{(1)} \mod ([r_1,(r_2,r_3)],[r_2,(r_1,r_3)] ) \\
		f_{v_{2}}^{(1)} &\equiv	f_{v_{3}}^{(1)} \mod (r_4,[r_2,(r_1,r_3)] ) \\
		f_{v_{1}}^{(1)} &\equiv 0 \mod (k_4,[r_1,(r_2,r_3)] ) \\
		f_{v_{2}}^{(1)} &\equiv 0 \mod (k_4,r_4 ) \\
		f_{v_{3}}^{(1)} &\equiv	0 \mod (k_4,[r_2,(r_1,r_3)] ). 
	\end{align*}
	By Lemma \ref{divisible k_1}, we know that $k_1$ is divisible by $(k_4,[r_1,(r_2,r_3)] )$, $k_2$ is divisible by $(k_4,r_4 )$ and $k_3$ is divisible by $(k_4,[r_2,(r_1,r_3)] )$. Moreover, it is easy to verify that $[(r_2,r_3,r_4),(r_1,r_4)]$  is divisible by $(r_4,[r_1,(r_2,r_3)] )$. Additionally,  $ [r_3,(r_1,r_2)]$  is divisible by
	\begin{equation*}
		([r_1,(r_2,r_3)],[r_2,(r_1,r_3)] )=  [(r_1,r_2),(r_1,r_3),(r_2,r_3),(r_1,r_2,r_3)].
	\end{equation*}  and $[(r_2,r_4),(r_1,r_3,r_4)]$ is divisible by $(r_4,[r_2,(r_1,r_3)] )$. Thus, a solution for  $f_{v_{4}}^{(1)}$ exists.

\end{Ex}

\begin{Th} \label{flowup}
	Let $(G,\beta)$ be an edge-labeled graph and  $\mathcal{P}^{i}$  the set of the longest paths $P_{ji}=(m_jr_{i_t}m_{i_t}r_{i_{t-1}}\dots r_{i_1}m_{i_1}r_{i}m_i)$ of $v_i$. Let    $F_k=(f_{kv_i},f_{kv_{i_1}},\dots,f_{kv_{i_t}},f_{kv_j})$  be a minimal spline with $f_{kv_i} \neq 0$ on $(P_{ji},\beta_{ji})$ where $P_{ji} \in \mathcal{P}^{i}$ and $\beta_{ji}=\beta{\mid{P_{ji}}}$.
	Then there exists a flow-up class $F^{(i)}=(0,\dots,0,f_{v_i}^{(i)},\dots,f_{v_n}^{(i)})$ with the first nonzero entry $f_{v_i}^{(i)}$ equal to  the least common multiple of all $f_{kv_i}$.

\end{Th}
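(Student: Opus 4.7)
The strategy is to reduce the general case to the already-settled case $i=1$ (Theorem \ref{exist}) by successively performing the zero vertex reduction of Definition \ref{graphre} at the vertices $v_1,\ldots,v_{i-1}$. First I would construct the reduced edge-labeled graph $(G_{zred},\beta_{zred})$ obtained from $(G,\beta)$ by removing the vertices $v_1,\ldots,v_{i-1}$ one at a time; each time a vertex $v_s$ is removed, every surviving neighbor $v_l$ has its vertex label updated to $[m_{v_l},r_{v_sv_l}]\mathbb{Z}$. In the final graph $v_i$ plays the role of the "first" vertex, with its module replaced by the lcm of $m_{v_i}$ together with the edge labels $r_{v_iv_s}$ for all neighbors $v_s$ of $v_i$ with $s<i$.

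Next I would invoke Proposition \ref{remove} (applied iteratively, one removed vertex at a time) to translate the statement: a tuple $(0,\ldots,0,f_{v_i}^{(i)},\ldots,f_{v_n}^{(i)})$ lies in $\hat R_{(G,\beta)}$ if and only if the truncated tuple $(f_{v_i}^{(i)},\ldots,f_{v_n}^{(i)})$ lies in $\hat R_{(G_{zred},\beta_{zred})}$. Therefore constructing an $i$-th flow-up class on $(G,\beta)$ with prescribed leading entry $L$ is equivalent to constructing a spline on $(G_{zred},\beta_{zred})$ with value $L$ at $v_i$.

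At this point I would apply Theorem \ref{exist} directly to $(G_{zred},\beta_{zred})$ with $v_i$ as its initial vertex. Theorem \ref{exist} yields a spline on $(G_{zred},\beta_{zred})$ whose entry at $v_i$ equals the least common multiple of the leading entries of minimal splines along each longest path of $v_i$ in the reduced graph. Since every longest path $P_{ji}\in\mathcal{P}^{i}$ in the statement is precisely a longest path of $v_i$ in the reduced graph, and the minimal spline $F_k$ on $(P_{ji},\beta_{ji})$ has its leading entry $f_{kv_i}$ determined by Theorem \ref{pexists} applied on that same path, the two lcms coincide. Prepending $i-1$ zeros and using Proposition \ref{remove} in the other direction produces the required flow-up class $F^{(i)}$ on $(G,\beta)$.

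The main obstacle, though mostly bookkeeping rather than conceptual, is verifying the correspondence in the last paragraph: that after iterated zero vertex reductions the longest paths of $v_i$ in $(G_{zred},\beta_{zred})$ match the set $\mathcal{P}^{i}$ used in the statement, and that the reduced vertex labels inserted along $v_i$ during the reduction agree with the quantities feeding into each $f_{kv_i}$ via Theorem \ref{pexists}. Once this identification is in place, the theorem follows at once from Theorem \ref{exist} combined with Proposition \ref{remove}.
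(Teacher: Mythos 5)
Your proposal is correct and follows essentially the same route as the paper: perform the zero vertex reduction at $v_1,\ldots,v_{i-1}$, use Proposition \ref{remove} to transfer the problem to $(G_{zred},\beta_{zred})$, and apply Theorem \ref{exist} with $v_i$ as the initial vertex. The only detail the paper adds that you gloss over is that the reduced graph may be disconnected, so one applies Theorem \ref{exist} to the connected component containing $v_i$ and sets the entries on the other components to zero before invoking Proposition \ref{remove}.
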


\begin{proof}
	In Theorem \ref{exist}, we  established the existence of  $F^{(1)}$. Now, we assert that the entry $f_{v_{j}}^{(i)}$ exists for all $j=i,i+1,\ldots,n$ where $i=2,\ldots,n$. Let  $(G_{zred},\beta_{zred})$ be the result of a graph reduction on all zero-labeled vertices $v_1,\ldots,v_{i-1}$ from $(G,\beta)$ .
	This new graph is not necessarily connected. We choose a connected component subgraph $H$ of $(G_{zred},\beta_{zred})$ containing the vertex $v_i$. By applying Theorem \ref{exist} on $H$, we obtain a flow-up class  $F^{(i)}=(f_{v_i}^{(i)},\dots,f_{v_k}^{(i)})$ on $H$ and extend this flow-up on $(G_{zred},\beta_{zred})$, by setting zeros for $f_{v_t}^{(i)}$ at the  vertices $v_t$ appearing in other connected components. By Proposition \ref{remove},  we conclude that  $F^{(i)}=(0,\dots,0,f_{v_i}^{(i)},\dots,f_{v_n}^{(i)})$ also exists on $(G,\beta)$.

\end{proof}

\begin{Th}\label{basisgeneral}
	Let  $(G,\beta)$  be an edge-labeled graph and  $\mathcal{P}^{i}$  the set of the longest paths $P_{ki}$ of $v_i$. Let    $F_k=(f_{kv_i},f_{kv_{j_1}},\dots,f_{kv_{j_t}})$  be a minimal spline with $f_{kv_i} \neq 0$ on $(P_{ji},\beta_{ji})$ where $P_{ji} \in \mathcal{P}^{i}$ and $\beta_{ji}=\beta{\mid{P_{ji}}}$. Let
	
	\begin{equation*}
		B = \begin{Bmatrix}
			\left( \begin{array}{c}
				f_{v_n}^{(1)}\\
				f_{v_{n-1}}^{(1)}\\
				\vdots \\
				f_{v_3}^{(1)} \\
				f_{v_2}^{(1)}\\
				f_{v_1}^{(1)}
			\end{array} \right) , &

			\left( \begin{array}{c}
				f_{v_n}^{(2)}\\
				f_{v_{n-1}}^{(2)}\\
				\vdots \\
				f_{v_3}^{(2)} \\
				f_{v_2}^{(2)}\\
				0
			\end{array} \right) , &

			\ldots  & ,

			\left( \begin{array}{c}
				f_{v_n}^{(i)}\\
				\vdots\\
				f_{v_i}^{(i)}\\
				0\\
				\vdots \\
				0
			\end{array} \right) , &
			
			\ldots & ,
			
			\left( \begin{array}{c}
				f_{v_n}^{(n)}\\
				0\\
				0\\
				\vdots \\
				0 \\
				0
			\end{array} \right) 
		\end{Bmatrix}
	\end{equation*}
	be a flow-up class set	where each  $F^{(i)} \in \hat{\mathcal{F}_i}$ is minimal  with the first nonzero entry	$f_{v_i}^{(i)}$ equal to  the least common multiple of all $f_{kv_i}$
	for $i=1,2,\ldots,n$. Then $B$ forms a module basis over $\mathbb{Z}$. 
\end{Th}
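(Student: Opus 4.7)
The plan is to reduce this to Theorem \ref{basis cond}, exactly as was done in the path case (Theorem \ref{basis}). By Theorem \ref{basis cond}, the set $B$ is a module basis for $\hat{R}_G$ if and only if, for every flow-up class $A^{(i)} = (0,\ldots,0,g_{v_i},\ldots,g_{v_n}) \in \hat{\mathcal{F}}_i$, the entry $g_{v_i}$ is a multiple of the leading entry $f_{v_i}^{(i)}$ of the chosen minimal flow-up class $F^{(i)}$.

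To establish this divisibility, I would apply Theorem \ref{flow-uplcm} to the flow-up class $A^{(i)}$. After performing the zero-vertex reductions of Definition \ref{graphre} at $v_1,\ldots,v_{i-1}$, Proposition \ref{remove} lets me view $A^{(i)}$ as a spline on the reduced graph $(G_{zred},\beta_{zred})$. The restriction of this spline to any longest path $P_{ji} \in \mathcal{P}^i$ of $v_i$ in $(G_{zred},\beta_{zred})$ is itself a spline, and by Theorem \ref{longmult} (equivalently Corollary \ref{$g_i with n$} on that path) its $v_i$-entry must be a multiple of $f_{kv_i}$, the leading entry of the minimal spline on $(P_{ji},\beta_{zred_{ji}})$. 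Consequently $g_{v_i}$ is divisible by $f_{kv_i}$ for every longest path, hence divisible by the least common multiple of all such $f_{kv_i}$. By the construction of $F^{(i)}$ given in Theorem \ref{flowup}, this lcm is precisely $f_{v_i}^{(i)}$, so $f_{v_i}^{(i)} \mid g_{v_i}$.

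Since this holds for every $i \in \{1,\ldots,n\}$ and every flow-up class $A^{(i)}$, the hypothesis of Theorem \ref{basis cond} is satisfied, and we conclude that $B$ forms a module basis for $\hat{R}_G$ over $\mathbb{Z}$. The only real work has been done in the preceding lemmas: Theorem \ref{flow-uplcm} supplies the necessary divisibility, Theorem \ref{flowup} supplies the existence of minimal flow-up classes achieving equality, and Theorem \ref{basis cond} packages the divisibility condition into the basis property. I do not anticipate a serious obstacle here — the main subtlety, already absorbed into Theorem \ref{flow-uplcm} and Lemma \ref{divisible k_1}, is the interaction of the edge labels along different longest paths at $v_i$, which is precisely what forced the longest-paths (rather than minimal-trails) point of view earlier in this section.
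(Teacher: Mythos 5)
Your proposal is correct and follows the paper's own route: the paper proves Theorem \ref{basisgeneral} simply by invoking Theorem \ref{basis cond}, with the divisibility of the leading entry of an arbitrary flow-up class by $f_{v_i}^{(i)}$ implicitly supplied by Theorem \ref{flow-uplcm} (via Proposition \ref{remove} and Theorem \ref{longmult}) and the existence of the minimal classes by Theorem \ref{flowup}, exactly as you spell out. You have merely made explicit the chain of references that the paper compresses into a one-line proof.
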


\begin{proof}
	It follows from Theorem \ref{basis cond}.
\end{proof}

\begin{figure}[h]
	
	\begin{center}
		\begin{tikzpicture}
			
			\node[main node] (1) {$v_6 $};
			\node[main node] (2) [right = 2cm of 1]  {$v_5 $};
			\node[main node] (3) [right = 2cm of 2]  {$v_1 $};
			\node[main node] (4) [right = 2cm of 3]  {$v_9 $};
			\node[main node] (5) [above = 2cm of 3]  {$v_8 $};
			\node[main node] (6) [below = 2cm of 3]  {$v_7 $};
			\node[main node] (7) [above = 2cm of 2]  {$v_4 $};
			\node[main node] (8) [left= 2cm of 7]  {$v_3 $};
			\node[main node] (9) [left = 2cm of 8]  {$v_2 $};

			\path[draw,thick]
			(1) edge node [above]{$ 4 \mathbb{Z}$} (2)
			(2) edge node [above]{$ 3 \mathbb{Z}$} (3)
			(3) edge node [right]{$ 9 \mathbb{Z}$} (5)
			(3) edge node [right]{$  6 \mathbb{Z}$} (6)
			(3) edge node [above]{$ 12 \mathbb{Z}$} (4)
			(2) edge node [right]{$  5 \mathbb{Z}  $} (7)
			(7) edge node [above]{$  8 \mathbb{Z}$} (8)
			(8) edge node [above]{$   10 \mathbb{Z}$} (9);

		\end{tikzpicture}
	\end{center}
	
	\caption{An example of the edge-labeled tree graph $(T_9,\beta)$} \label{extree}
\end{figure}
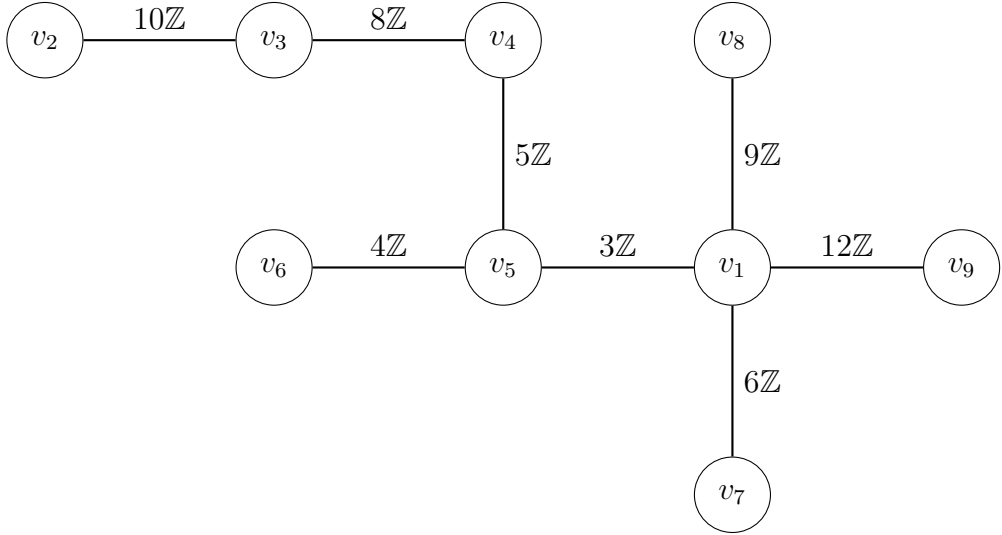	
Now we give an example of trees.
 
\begin{Ex}
	Consider the edge-labeled tree graph $(T_9, \beta)$  with a $\mathbb{Z}$-module $M_{v_i}= m_i \mathbb{Z}$ at each vertex $v_i$ where $$m_1=4, m_2=15, m_3=9, m_4=8, m_5 = 12, m_6=3, m_7=2, m_8=5, m_9=6.$$
Each edge $uv$ is labeled by  $ l_{uv}=\beta(uv)$, where 	$\beta(uv)= \mathbb{Z} / r_{uv} \mathbb{Z} \subset \mathbb{Z}$, and the corresponding submodule $r_{uv} \mathbb{Z}$ is associated to the edge, as illustrated in Figure~\ref{extree}.
	Define $F^{(i)}= (0,\ldots,0,f_{v_i}^{(i)},\ldots,f_{v_9}^{(i)})$ 
	to be a flow-up class  with $f_{v_i}^{(i)} \neq 0$ for each $i$ with $1\le i \le 9$ and $f_{v_s}^{(i)} =0$ for all $s <i$. 
	By Theorem \ref{longmult} and \ref{minimal}, we obtain  the first nonzero smallest entry ${f_{v_i}^{(i)}}$ of $F^{(i)}$ as follows:
	\begin{align*}
	f_{v_1}^{(1)}= &[4,(5,9),(2,6),(6,12),(3,12),(3,4,3),(8,3,5),(9,8,5,3),(15,10,8,5,3)]=12\\ f_{v_2}^{(2)} = & [15,(9,10),(8,8,10),([12,3],5,8,10),(3,4,5,8,10)]=30 \\
 f_{v_3}^{(3)}= &	[9,10,(8,8),([12,3],5,8),(3,4,5,8)]=360\\
  f_{v_4}^{(4)}= &[8,8,([12,3],5),(3,4,5)]=8 \\
  f_{v_5}^{(5)}= & [12,3,5,(3,4)]=60 \\
  f_{v_6}^{(6)}=& [3,4]=12 \\
  f_{v_7}^{(7)}= &[2,6]=6 \\
  f_{v_8}^{(8)}= & [5,9]=45 \\
  f_{v_9}^{(9)} = & [6,12]=12.
	\end{align*}	
Using these and solving spline conditons we obtain a set $B$ of minimal flow-up classes:
	\small	\begin{equation*}
		B = \begin{Bmatrix}
			\left( \begin{array}{c}
				0\\
				30\\
				0\\
				0\\
				0\\
				0\\
				0\\
				0\\
				12
			\end{array} \right) , &

			\left( \begin{array}{c}
				0\\
				0\\
				0\\
				0\\
				0\\
				0\\
				0\\
				30\\
				0
			\end{array} \right) , &
			
			\left( \begin{array}{c}
				0\\
				0\\
				0\\
				0\\
				0\\
				0\\
				360\\
				0\\
				0
			\end{array} \right) , &
			
			\left( \begin{array}{c}
				0\\
				0\\
				0\\
				0\\
				48\\
				8\\
				0\\
				0\\
				0
			\end{array} \right) , &

			\left( \begin{array}{c}
				0\\
				0\\
				0\\
				0\\
				60\\
				0\\
				0\\
				0\\
				0
			\end{array} \right) , &

			\left( \begin{array}{c}
				0\\
				0\\
				0\\
				12\\
				0\\
				0\\
				0\\
				0\\
				0
			\end{array} \right) , &

			\left( \begin{array}{c}
				0\\
				0\\
				6\\
				0\\
				0\\
				0\\
				0\\
				0\\
				0
			\end{array} \right) , &

			\left( \begin{array}{c}
				0\\
				45\\
				0\\
				0\\
				0\\
				0\\
				0\\
				0\\
				0
			\end{array} \right) , &
			
			\left( \begin{array}{c}
				12\\
				0\\
				0\\
				0\\
				0\\
				0\\
				0\\
				0\\
				0
			\end{array} \right)  &
			
		\end{Bmatrix}.
	\end{equation*}
	\normalsize
	By Theorem \ref{basisgeneral}, it forms a module basis over $\mathbb{Z}$.
\end{Ex}

\begin{figure}[h!]	
	\begin{center}
		\begin{tikzpicture}
			\node[main node] (1) {$m_1 \mathbb{Z}$};
			\node[main node] (2) [right = 2cm of 1]  {$m_2 \mathbb{Z}$};
			\node[main node] (3) [right = 2cm of 2]  {$m_3 \mathbb{Z}$};
			\node[right=.1cm of 3] {$\dots$};
			
			\node[main node] (4) [right = 1cm of 3]  {$m_{n-2} \mathbb{Z}$};
			\node[main node] (5) [right = 2cm of 4]  {$m_{n-1} \mathbb{Z}$};
			\node[main node] (6) [right = 2cm of 5]  {$m_n \mathbb{Z}$};

			\path[draw,thick]
			(1) edge node [above]{$ r_1 \mathbb{Z}$} (2)
			(2) edge node [above]{$ r_2 \mathbb{Z}$} (3)

			(4) edge node [above]{$ r_{n-2} \mathbb{Z}$} (5)
			(5) edge node [above]{$ r_{n-1} \mathbb{Z}$} (6);

			\draw [out=35,in=30,looseness=0.5] (1) to node[ above]{$ r_n \mathbb{Z}$}  (6);

		\end{tikzpicture}
	\end{center}
	
	\caption{An edge-labeled cycle graph $(C_n,\beta)$} \label{cycle n}
\end{figure}
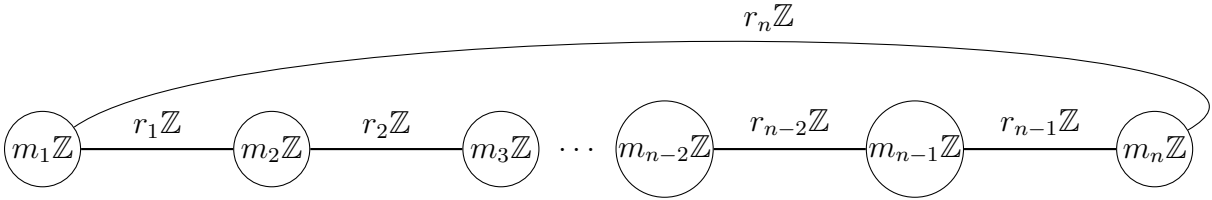

The next example is on arbitrary ordered cycles.
\begin{Ex}
Consider the edge-labeled cycle graph $(C_n,\beta)$  as shown in Figure \ref{cycle n}. Let $F^{(i)}=(0,\dots,0,f_{v_i}^{(i)},\dots,f_{v_n}^{(i)})$ be a  
 flow-up class on $(C_n,\beta)$, where each $f_{v_i}^{(i)}\neq 0$ and $f_{v_s}^{(i)}=0$ for all $s<i$. The value of $f_{v_i}^{(i)}$ can be determined using the longest paths. Specifically, the set of longest paths to $v_1$ is given by	$\mathcal{P}^{1}= \{P_{n1},P_{21}\}$,   where $P_{n1}=(v_ne_{n-1}v_{n-1}\cdots e_2v_{2}e_{1}v_1)$ and $P_{21}=(v_2e_2v_3\cdots e_{n-1}v_ne_nv_1)$. Let $f_{11}$ and  $f_{12}$  denote the first nonzero entries of a minimal flow-up class along the paths $(P_{n1},\beta_{n1})$ and $(P_{21},\beta_{21})$, respectively.  Then, by Theorem  \ref{longmult}, $f_{v_1}^{(1)}$ is a multiple of $[f_{11},f_{12}]$ where
 \begin{align*}
 	f_{11} = & [m_1, (m_2,r_1),(m_3,r_1,r_2),\dots,(m_n,r_1,\dots,r_{n-1})], \\
 	f_{12}= & [m_1, (m_n,r_n),(m_{n-1},r_{n-1},r_n),\dots,(m_2,r_2,\dots,r_n)].
 \end{align*} By Theorem \ref{exist}, there exists a minimal flow-up $F^{(1)}$ with the smallest element $f_{v_1}^{(1)}=[f_{11},f_{12}]$. Similarly,
 for any $2\le i \le n$, we aim to derive a similar condition for $f_{v_i}^{(i)}$.  Given that 
  $f_{v_s}^{(i)} = 0$ for all $s < i$, we begin by removing the vertices with zero labels and relabel labels of the vertices to connecting zero labels to obtain a reduced graph. The reduced graph $(G_{zred},\beta_{zred})$ corresponds to the path graph illustrated in Figure \ref{reducedpath}, where $m_n'=[m_n,r_n]$ and $m_i'=[m_i,r_{i-1}]$. Applying Theorem \ref{pexists} of paths, we obtain the following expression for $f_{v_i}^{(i)}$:
 \begin{align*}
 	f_{v_i}^{(i)} & =  [m_i^{'}, (m_{i+1},r_i),(m_{i+2},r_i,r_{i+1}),\dots,(m^{'}_n,r_i,r_{i+1},\dots,r_{n-1})] \\
 	&=  [m_i,r_{i-1}, (m_{i+1},r_i),(m_{i+2},r_i,r_{i+1}),\dots,([m_n,r_n],r_i,r_{i+1},\dots,r_{n-1})] 
 \end{align*}
 for all $i=2,\dots,n$.
  Therefore, by Theorem \ref{basisgeneral}, the set $B=\{F^{(1)},F^{(2)},\dots,F^{(i)},\dots,F^{(n)}\}$ where each $F^{(i)} \in \hat{\mathcal{F}_i}$ is a minimal flow-up class with the first nonzero entry given by:
  \begin{align*}
  	f_{v_1}^{(1)}= & [m_1,(m_2,r_1),(m_3,r_1,r_2),\dots,(m_n,r_1,\dots,r_{n-1}),(m_n,r_n),(m_{n-1},r_{n-1},r_n),\dots,(m_2,r_2,\dots,r_n)], \\
  	f_{v_i}^{(i)}= & [m_i,r_{i-1}, (m_{i+1},r_i),(m_{i+2},r_i,r_{i+1}),\dots,(m_n,r_i,r_{i+1},\dots,r_{n-1}),(r_i,r_{i+1},\dots,r_{n-1},r_n)]
  \end{align*}
  for all $i=2,\dots,n$, forms a $\mathbb{Z}$-module basis.

  \begin{figure}[h!]	
 	\begin{center}
 		\begin{tikzpicture}
 			\node[main node] (1) {$ m_i^{'}\mathbb{Z}$};
 			\node[main node] (2) [right = 2cm of 1]  {$m_{i+1} \mathbb{Z}$};
 			\node[main node] (3) [right = 2cm of 2]  {$m_{i+2} \mathbb{Z}$};
 			\node[right=.1cm of 3] {$\dots$};
 			
 			\node[main node] (4) [right = 1cm of 3]  {$m_{n-2} \mathbb{Z}$};
 			\node[main node] (5) [right = 2cm of 4]  {$m_{n-1} \mathbb{Z}$};
 			\node[main node] (6) [right = 2cm of 5]  {$m_n^{'} \mathbb{Z}$};

 			\path[draw,thick]
 			(1) edge node [above]{$r_i \mathbb{Z}$} (2)
 			(2) edge node [above]{$ r_{i+1} \mathbb{Z}$} (3)

 			(4) edge node [above]{$ r_{n-2} \mathbb{Z}$} (5)
 			(5) edge node [above]{$ r_{n-1} \mathbb{Z}$} (6);

 		\end{tikzpicture}
 	\end{center}
 	
 	\caption{Reduced graph $(G_{zred},\beta_{zred})$} \label{reducedpath}
 \end{figure}
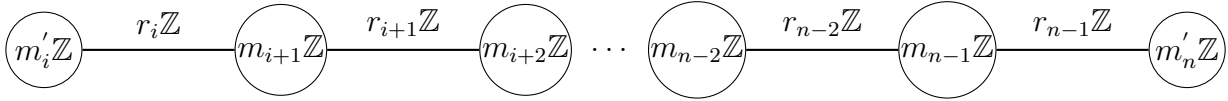

\end{Ex}

\section*{Acknowledgement}

This article is based on doctoral research conducted at the Hacettepe University Graduate School of Science and Engineering.

\end{document}